\newtheorem{thm}{Theorem}[section]
\newtheorem{cor}[thm]{Corollary}
\newtheorem{defin}[thm]{Definition}
\newtheorem{rmk}[thm]{Remark}
\newtheorem{lem}[thm]{Lemma}
\newtheorem{prop}[thm]{Proposition}
\newcommand{\comments}[1]{}
\newcommand{\mcD}{\mbox{$\mathcal{D}$}}
\newcommand{\mcP}{\mbox{$\mathcal{P}$}}
\title{High Distance Heegaard Splittings via Dehn Twists}
\begin{document}
\author{Michael Yoshizawa}
\date{\today}

\thanks{We would like to thank Martin Scharlemann for his guidance and many helpful conversations.  Special thanks also to Tsuyoshi Kobayashi and Yo'av Rieck for their insightful observations and the math department of Nara Women's University for its hospitality.  The author was supported in part by the NSF grants OISE-1209236 and DMS-1005661.}

\begin{abstract}
In 2001, J. Hempel proved the existence of Heegaard splittings of arbitrarily high distance by using a high power of a pseudo-Anosov map as the gluing map between two handlebodies.  We show that lower bounds on distance can also be obtained when using a high power of a suitably chosen Dehn twist.  In certain cases, we can then determine the exact distance of the resulting splitting.  These results can be seen as a natural extension of work by A. Casson and C. Gordon in 1987 regarding strongly irreducible Heegaard splittings.
\end{abstract}

\maketitle

\section{Introduction}

Since Hempel \cite{He} introduced the notion of the distance of a Heegaard splitting in 2001, there have been a number of results linking distance with the topology of the ambient 3-manifold.  For example, work by Thompson \cite{Th} and Hempel \cite{He} showed that if a manifold admits a Heegaard splitting of distance $\geq 3$, then the manifold is atoroidal and not Seifert fibered, hence hyperbolic by the Geometrization Conjecture.  Hartshorn \cite{Ha} found that if a manifold admits a distance $d$ Heegaard splitting, then the genus $g$ of an orientable incompressible surface must satisfy $2g \geq d$.  Scharlemann and Tomova \cite{ST} generalized this result to show that a Heegaard splitting of genus $g$ and distance $d$ such that $2g < d$ is the unique splitting of minimal genus for that manifold.

Concurrently, there have been many efforts to construct examples of Heegaard splittings of high distance.  Many approaches make use of the fact that a Heegaard splitting can be described by a homeomorphism between two handlebodies of equal genus.  One of the first major achievements was made by Hempel \cite{He} who, adapting an argument of Kobayashi \cite{Ko2}, proved the existence of Heegaard splittings of arbitrarily high distance via the use of a high power of a pseudo-Anosov map as the gluing map between two handlebodies.  However, the use of pseudo-Anosov maps meant no concrete examples were provided.


This can be rectified by considering Heegaard splittings determined by a Dehn twist map, rather than one that is pseudo-Anosov.  Casson and Gordon \cite{CG} introduced what is now referred to as the Casson-Gordon rectangle condition (published by Kobayashi in \cite{Ko}) that would ensure a Heegaard splitting has distance $\geq 2$.  Moreover, they provided methods of generating Heegaard splittings using Dehn twists that satisfied this condition.  Two of their results can be stated as the following.  In the following statements, $D(H)$ denotes the disk set of a handlebody $H$ and distance refers to the distance in the curve complex of $\partial H$ (these definitions and more details appear in Section \ref{sec:heegaard}).

\begin{thm} \label{thm:CG1}
\textnormal{(Casson-Gordon \cite{CG}).} Suppose $H$ is a genus $g$ handlebody.  Let $\gamma$ be a simple closed curve that is distance $\geq 2$ from $D(H)$.  Then gluing $H$ to a copy of itself via $\geq 2$ Dehn twists about $\gamma$ determines a genus $g$ Heegaard splitting of distance $\geq 2$.
\end{thm}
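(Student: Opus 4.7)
The plan is to reformulate distance $\geq 2$ as an intersection condition on the Heegaard surface $\partial H$ and then exploit a winding-number argument in an annular neighborhood of $\gamma$.  Identifying both handlebodies with $H$, the two disk sets, viewed as subsets of the curve complex of $\partial H$, become $D(H)$ and $\tau_\gamma^n(D(H))$, so I must show that for every $D_1, D_2 \in D(H)$ the curves $\partial D_1$ and $\tau_\gamma^n(\partial D_2)$ are neither isotopic nor disjointly realizable on $\partial H$.

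Let $A = N(\gamma)$ be an annular neighborhood of $\gamma$ in $\partial H$, with a fixed trivialization.  Since $d(\gamma, D(H)) \geq 2$, we have $i(\gamma, \partial D) \geq 1$ for every $D \in D(H)$, so each $\partial D$, placed in minimal position with $\gamma$, meets $A$ in pairwise disjoint essential arcs running from one boundary component of $A$ to the other.  Two disjoint essential arcs in the annulus co-bound a pair of rectangles, so they share a common integer winding number; in particular this gives a well-defined invariant $w(\partial D) \in \mathbb{Z}$.  Because $\tau_\gamma$ performs a unit twist inside $A$, one has $w(\tau_\gamma^n(\alpha)) = w(\alpha) + n$ for every essential simple closed curve $\alpha$ with $i(\alpha, \gamma) \geq 1$.

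The crucial step is to show that $w$ is constant on $D(H)$.  If $D, D' \in D(H)$ are disjoint as disks in $H$, then $\partial D$ and $\partial D'$ are disjoint on $\partial H$, and all arcs of $(\partial D \cup \partial D') \cap A$ are mutually disjoint essential arcs, hence mutually parallel, giving $w(\partial D) = w(\partial D')$.  By McCullough's theorem the disk complex of $H$ is connected, so any two disks are joined by a path of pairwise disjoint disks; concatenating these equalities yields a single value $w_H$ of $w$ on all of $D(H)$.

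Now suppose for contradiction that the splitting has distance $\leq 1$; then there exist $D_1, D_2 \in D(H)$ with $\partial D_1$ and $\tau_\gamma^n(\partial D_2)$ either isotopic or disjoint on $\partial H$.  In either case $w(\partial D_1) = w(\tau_\gamma^n(\partial D_2)) = w(\partial D_2) + n$, while the previous paragraph forces $w(\partial D_1) = w(\partial D_2) = w_H$, so $n = 0$, contradicting $n \geq 2$.  The main technical hurdle is setting up the winding invariant rigorously --- verifying that disjoint essential arcs in the annulus are parallel, so that each disk boundary has a single well-defined winding, and then propagating this equality through the disk complex of $H$ via its connectivity.
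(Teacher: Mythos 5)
Your strategy is in the right spirit---it is essentially the twisting-number approach the paper itself develops---but it contains a genuine error at its central step. The claim that ``two disjoint essential arcs in the annulus co-bound a pair of rectangles, so they share a common integer winding number'' is false. Disjoint essential arcs in $A$ are indeed parallel as unparametrized arcs, but a winding number only becomes a number after you fix reference data (fixed endpoints, or a transverse reference system such as the pants decomposition used in the paper), and with respect to any such reference two disjoint essential arcs can have winding numbers differing by $1$: in the universal cover $[0,1]\times\mathbb{R}$, a lift of the second arc need only fit inside the parallelogram between two consecutive lifts of the first, which constrains the slopes to differ by \emph{at most} $1$, not to agree (this is exactly Lemma \ref{lem:disjointcircling}). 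Consequently $w$ is not constant on $D(H)$---propagating ``differ by at most $1$'' along a path in the disk complex gives unbounded drift, not a single value $w_H$---and the final step, where disjointness of $\partial D_1$ from $\tau_\gamma^n(\partial D_2)$ is used to force $w(\partial D_1)=w(\partial D_2)+n$, collapses. A telltale sign is that your argument nowhere uses $n\ge 2$ and would therefore ``prove'' the statement for a single Dehn twist, which is precisely the case the hypothesis is designed to exclude. There is also a prior well-definedness issue: without a reference system crossing the annulus, $w(\partial D)$ is not an isotopy invariant of $\partial D$; isotopies keeping $\partial D$ in minimal position with $\gamma$ can still change the winding of the arcs inside $N$ (the ``outer triangle'' moves of Figure \ref{fig:properties1-5}).

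The correct repair is the route the paper takes. Use $d(\gamma,D(H))\ge 2$ and Theorem \ref{thm:Starr2} to choose a pants decomposition $\mcP$ of $H$ with $\gamma$ $1$-seamed, and measure twisting numbers of arcs in an annular neighborhood $N$ of $\gamma$ against $\mcP$. By Lemma \ref{lem:compdiskwave}, every meridian of $H$ in efficient position is either parallel to $\mcP$ or contains a wave, hence has an arc of twisting number $0$ in $N$; by Lemma \ref{lem:disjointcircling} all of its arcs then have twisting number at least $-1$. Corollary \ref{cor:winding} (i.e.\ Lemma \ref{lem:pantswinding}(b)) then gives $tw(\tau_\gamma^k(\beta),\gamma)\ge k$ for every meridian $\beta$, so for $k\ge 2$ every curve of $\tau_\gamma^k(D(H))$ has twisting number $\ge 2$ about the $1$-seamed curve $\gamma$, is therefore itself $1$-seamed with respect to $\mcP$, hence diskbusting on $H$, hence at distance $\ge 2$ from $D(H)$. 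The inequality ``$\le 1$'' in place of your claimed equality is exactly what makes the hypothesis of at least two Dehn twists necessary, and the passage from ``large twisting number'' to ``diskbusting'' (via seams, not via pure winding arithmetic) is the missing ingredient that closes the argument.
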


\begin{thm} \label{thm:CG2}
\textnormal{(Casson-Gordon \cite{CG}).} Suppose $H_1$ and $H_2$ are genus $g$ handlebodies with $\partial H_1 = \partial H_2$ and the distance between $D(H_1)$ and $D(H_2)$ is at most 1. Let $\gamma$ be a simple closed curve that is distance $\geq 2$ from both $D(H_1)$ and $D(H_2)$.  Then gluing $H_1$ to $H_2$ via $\geq 6$ Dehn twists about $\gamma$ determines a genus $g$ Heegaard splitting of distance $\geq 2$.
\end{thm}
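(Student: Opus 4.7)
My approach is to argue by contradiction, mirroring the template of Theorem~\ref{thm:CG1}. Suppose the Heegaard splitting $M = H_1 \cup_{\tau_\gamma^n} H_2$ has distance $\le 1$, witnessed by essential disks $D_\alpha \subset H_1$ and $D_\beta \subset H_2$ such that, on the identified surface $\Sigma$, $i(\alpha, \tau_\gamma^n(\beta)) = 0$, where $\alpha = \partial D_\alpha$ and $\beta = \partial D_\beta$. Since $d(D(H_i),\gamma) \ge 2$, neither $\alpha$ nor $\beta$ can be made disjoint from $\gamma$, so $i(\alpha,\gamma), i(\beta,\gamma) \ge 1$. The classical Dehn-twist intersection inequality
\[
 i(\alpha, \tau_\gamma^n(\beta)) \;\ge\; |n| \, i(\alpha,\gamma)\, i(\beta,\gamma) \;-\; i(\alpha,\beta),
\]
combined with $i(\alpha, \tau_\gamma^n(\beta)) = 0$, forces $i(\alpha,\beta) \ge n \ge 6$. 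This is the easy half: any witnesses of low distance must intersect a lot.

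For the contradiction I would invoke the short-pair hypothesis $d(D(H_1), D(H_2)) \le 1$ and fix $\alpha_0 \in D(H_1)$, $\beta_0 \in D(H_2)$ with $\alpha_0 \cap \beta_0 = \emptyset$ on $\Sigma$. The plan is to perform disk surgery inside each handlebody, surgering $D_\alpha$ against $D_{\alpha_0}$ within $H_1$ and $D_\beta$ against $D_{\beta_0}$ within $H_2$, always along outermost arcs, so as to replace $\alpha, \beta$ by disk boundaries $\alpha', \beta'$ that are disjoint from $\alpha_0, \beta_0$ respectively, while (i) keeping $i(\alpha',\gamma), i(\beta',\gamma) \ge 1$ (which is automatic from $d(D(H_i),\gamma) \ge 2$, since no $H_i$-disk can be disjoint from $\gamma$), (ii) not increasing the intersection number with the opposite side, and (iii) maintaining $i(\alpha', \tau_\gamma^n(\beta')) = 0$. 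Once $\alpha', \beta'$ lie in the complement of $\alpha_0 \cup \beta_0$, a direct combinatorial analysis of how such curves can cross each other in the planar pieces of $\Sigma \setminus (\alpha_0 \cup \beta_0)$ should produce an upper bound $i(\alpha',\beta') \le 5$, contradicting the lower bound $i(\alpha',\beta') \ge n \ge 6$ supplied by applying the twist inequality to the reduced pair.

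\textbf{Main obstacle.} The bulk of the work lies in the surgery reduction: one must arrange the surgeries so that all three preservation conditions (i), (ii), (iii) hold simultaneously, and so that the final combinatorial count produces the sharp bound $i(\alpha',\beta') \le 5$ rather than something larger. The jump from the hypothesis $n \ge 2$ in Theorem~\ref{thm:CG1} to $n \ge 6$ here reflects the weaker structural assumption: in Theorem~\ref{thm:CG1} one has $H_1 = H_2$, so that $\alpha_0$ and $\beta_0$ can be taken to coincide in a single curve, whereas here the pair $(\alpha_0, \beta_0)$ need only be disjoint, producing a richer intersection pattern for the reduced pair to inhabit and so requiring the extra twisting to overpower it.
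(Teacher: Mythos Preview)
The paper does not itself prove Theorem~\ref{thm:CG2}; it is quoted as a result of Casson--Gordon, with a proof reference to the appendix of \cite{MS}. The paper's own contribution is the generalization Lemma~\ref{lem:arbmanifold}, whose specialization $d_1=d_2=2$, $n=1$ recovers (in fact slightly sharpens) the statement. That proof runs through the twisting-number machinery built in Sections~3--5: choose pants decompositions $\mcP_1,\mcP_2$ with $\gamma$ $1$-seamed with respect to each, use Lemma~\ref{lem:compdiskwave} to see that every meridian has an arc of twisting number $0$ in an annular neighborhood $N$ of $\gamma$, bound the twisting numbers of $H_2$-meridians with respect to $\mcP_1$ via the short path between the disk sets (Claims~1 and~2), and then apply Corollary~\ref{cor:winding} and Lemma~\ref{lem:twistchain} to propagate twisting-number bounds along any putative short path in $C(\Sigma)$, forcing $\alpha_1$ to be $1$-seamed and hence diskbusting.

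Your proposal takes a genuinely different route, but it has a real gap in the second half. The first step is fine: the twist inequality does force any disjoint pair $(\alpha,\tau_\gamma^n(\beta))$ with $\alpha\in D(H_1)$, $\beta\in D(H_2)$ to satisfy $i(\alpha,\beta)\ge n$. The trouble is the reduction. Outermost-arc surgery of $D_\alpha$ against $D_{\alpha_0}$ replaces $\alpha$ by a band sum of $\alpha$ with a push-off of $\alpha_0$; there is no mechanism that keeps the new boundary disjoint from $\tau_\gamma^n(\beta)$, so condition~(iii) is not preserved, and without it you lose the link to the lower bound $i(\alpha',\beta')\ge n$ for the \emph{reduced} pair. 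Independently, the asserted upper bound $i(\alpha',\beta')\le 5$ cannot hold at the level of generality you state: $\Sigma\setminus(\alpha_0\cup\beta_0)$ is not a union of planar pieces (for nonseparating disjoint $\alpha_0,\beta_0$ it has a component of genus $g-2$), and even on a four-holed sphere two essential simple closed curves can meet in arbitrarily many points. So there is no universal bound on $i(\alpha',\beta')$ coming just from ``$\alpha'$ disjoint from $\alpha_0$'' and ``$\beta'$ disjoint from $\beta_0$''. Any argument along your lines would have to retain a tight coupling between $\alpha'$ and $\beta'$ through the twist, which is exactly what the surgery destroys; the paper's twisting-number bookkeeping is one concrete way to carry that coupling through.
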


More recently, Berge developed a modified rectangle condition that would guarantee a genus 2 splitting has distance $\geq 3$ (this criterion is described by Scharlemann in \cite{Sc}) and constructs examples of genus 2 splittings satisfying this condition. Hempel \cite{He} used Dehn twists and the notion of stacks to construct Heegaard splittings of distance $\geq 3$ and Evans \cite{Ev} extended this result to Heegaard splittings of distance $\geq d$ for any $d \geq 2$.  Lustig and Moriah \cite{LM} used Dehn twists and derived train tracks to produce another class of examples of splittings with distance $\geq d$.  However, all of these results only provided a lower bound on the distance of the constructed splittings; the exact distance of the constructed examples of high distance splittings were typically unknown.

In fact, until recently there was no proof that there existed Heegaard splittings with distance equal to $d$ for every $d \in \mathbb{N}$.  This question has largely been settled by the work of Ido, Jang, and Kobayashi \cite{IJK}, who have developed examples of high distance Heegaard splittings with an exact known distance. For any genus $g \geq 2$, they construct a genus $g$ Heegaard splitting of a closed 3-manifold of distance $d$ for any $d \geq n_g$, where $n_g$ is a constant solely dependent on $g$ and whose existence is due to a result of Masur and Minsky \cite{MM3} regarding the quasi-convexity of the disk complex.  

We also provide examples of Heegaard splittings with an exact known distance using a different approach that relies exclusively on Dehn twist maps. The benefit of this approach is we avoid the need for the distance of our examples to be at least $n_g$. Our results, shown below, can be seen as an extension of the work by Casson and Gordon (Theorems \ref{thm:CG1} and \ref{thm:CG2}).  They are proved in Section \ref{sec:main}.
 
\begin{thm}
Suppose $H$ is a genus $g$ handlebody.  Let $\gamma$ be a simple closed curve that is distance $d$ from $D(H)$ for some $d \geq 2$.  Then gluing $H$ to a copy of itself via $\geq 2d-2$ Dehn twists about $\gamma$ determines a genus $g$ Heegaard splitting of distance exactly $2d-2$.
\end{thm}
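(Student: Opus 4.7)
\textit{Upper bound.} The plan is to exhibit an explicit path of length $2d-2$ between the two disk sets. Let $\alpha_0, \alpha_1, \ldots, \alpha_{d-1}, \alpha_d = \gamma$ be a geodesic in $\mathcal{C}(\partial H)$ realizing $d(D(H),\gamma) = d$, with $\alpha_0 \in D(H)$. Because $\alpha_{d-1}$ is disjoint from $\gamma$ and the Dehn twist $\tau$ is supported in a regular neighborhood of $\gamma$, the isotopy class of $\alpha_{d-1}$ is fixed by $\tau^n$. Concatenating then yields
\[
\alpha_0,\ \alpha_1,\ \ldots,\ \alpha_{d-1} = \tau^n(\alpha_{d-1}),\ \tau^n(\alpha_{d-2}),\ \ldots,\ \tau^n(\alpha_0),
\]
a path of length $(d-1) + (d-1) = 2d-2$ in $\mathcal{C}(\partial H)$ from $\alpha_0 \in D(H)$ to $\tau^n(\alpha_0) \in \tau^n(D(H))$. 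This bound is independent of $n$, so $d(D(H), \tau^n(D(H))) \leq 2d-2$.

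\textit{Lower bound: strategy.} I would show that every path $\beta_0, \beta_1, \ldots, \beta_k$ with $\beta_0 \in D(H)$ and $\beta_k \in \tau^n(D(H))$ satisfies $k \geq 2d-2$ whenever $n \geq 2d-2$. Since $\tau^n(\gamma) = \gamma$, both endpoints satisfy $d(\beta_0, \gamma) \geq d$ and $d(\beta_k, \gamma) = d(\tau^{-n}(\beta_k), \gamma) \geq d$. The central reduction is the following \emph{key claim}: at least one vertex $\beta_j$ of the path must be disjoint from (or equal to) $\gamma$, i.e.\ $d(\beta_j, \gamma) \leq 1$. Once this is in hand, the reverse triangle inequality gives $j \geq d(\beta_0, \beta_j) \geq d(\beta_0, \gamma) - d(\beta_j, \gamma) \geq d-1$, and symmetrically $k - j \geq d - 1$, so $k \geq 2d-2$ as required. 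Combined with the upper bound, the distance is exactly $2d-2$.

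\textit{The main obstacle: the key claim.} This is where the hypothesis $n \geq 2d-2$ must be used, and where the bulk of the work lies. Suppose toward contradiction that every $\beta_j$ crosses $\gamma$ essentially. The plan is a Casson-Gordon style intersection and surgery argument in the spirit of Theorem~\ref{thm:CG1}. Writing $\beta_k' := \tau^{-n}(\beta_k) \in D(H)$, one has the standard intersection estimate
\[
i(\beta_0,\, \beta_k) \;\geq\; n \cdot i(\beta_0,\gamma)\cdot i(\beta_k',\gamma) \;-\; i(\beta_0, \beta_k')
\;\geq\; n \;-\; i(\beta_0, \beta_k'),
\]
so twisting forces $\beta_0$ and $\beta_k$ to intersect many times, while the short path of length $k < 2d-2$ constrains how large $i(\beta_0, \beta_k)$ can be once one tracks intersections along consecutive disjoint vertices. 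The plan is then to surger the $\beta_j$ along $\gamma$, using outermost arcs of $\beta_j \cap \gamma$ on the disks $\beta_0$ and $\beta_k'$, to build a path in $\mathcal{C}(\partial H)$ from $D(H)$ to $\gamma$ of length strictly less than $d$; this contradicts $d(D(H), \gamma) = d$. The delicate point I expect to confront is ensuring each surgery step genuinely yields an essential simple closed curve that shortens the distance to $\gamma$, which requires careful bookkeeping of how the twists $\tau^n$ interact with the arc patterns of disks cut by $\gamma$.
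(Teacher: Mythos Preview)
Your upper bound is correct and is exactly the paper's Lemma~\ref{lem:upperbound}. The overall shape of your lower-bound reduction is also right: assuming a path of length $\ell<2d-2$, the triangle inequality (using $d(D(H),\gamma)=d(\tau^n(D(H)),\gamma)=d$) forces every vertex $\beta_j$ to intersect $\gamma$, and one must then extract a contradiction from $n\ge 2d-2>\ell$.

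The gap is in the mechanism you propose for that contradiction. The claim that ``the short path of length $k<2d-2$ constrains how large $i(\beta_0,\beta_k)$ can be'' is false: disjointness of consecutive vertices imposes no upper bound on $i(\beta_0,\beta_k)$ (already $\beta_0$ and $\beta_2$ can meet arbitrarily often while both miss $\beta_1$), so the intersection inequality you write down yields nothing, and the outlined surgery has no evident way to manufacture a path from $D(H)$ to $\gamma$ of length $<d$. The invariant that \emph{does} change by at most $1$ along an edge of $C(\Sigma)$ is not $i(\cdot,\cdot)$ but the \emph{twisting number} about $\gamma$ measured relative to a pants decomposition $\mcP$ of $H$ in which $\gamma$ is $1$-seamed (such a $\mcP$ exists by Theorem~\ref{thm:Starr2}): this is Lemma~\ref{lem:disjointcircling} and its iterate Lemma~\ref{lem:twistchain}. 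Since $\beta_\ell=\tau^n_\gamma(\beta)$ for some meridian $\beta$, Lemma~\ref{lem:pantswinding}(b) gives $tw(\beta_\ell,\gamma)\ge n$, whence some arc of $\beta_1$ in a neighborhood of $\gamma$ has twisting number at least $n-(\ell-1)\ge 2$; this makes $\beta_1$ itself $1$-seamed and hence diskbusting, contradicting $d(\beta_1,D(H))\le 1$. That is the content of Lemma~\ref{lem:singlecurve}. In short: replace the intersection-number/surgery plan with the twisting-number (equivalently, annular subsurface projection) argument.
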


\begin{thm}
Suppose $H_1$ and $H_2$ are genus $g$ handlebodies with $\partial H_1 = \partial H_2$ and $n = \max \{1,d(D(H_1),D(H_2))\}$.  Let $\gamma$ be a simple closed curve that is distance $d_1$ from $D(H_1)$ and $d_2$ from $D(H_2)$ where $d_1 \geq 2$, $d_2 \geq 2$, and $d_1 + d_2 - 2 > n$.  Then gluing $H_1$ to $H_2$ via $\geq n + d_1 + d_2$ Dehn twists about $\gamma$ determines a genus $g$ Heegaard splitting whose distance is at least $d_1 + d_2 - 2$ and at most $d_1 + d_2$.
\end{thm}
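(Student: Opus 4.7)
The plan is to bound the splitting distance $d := d(D(H_1), \tau_\gamma^k(D(H_2)))$ from above and below, where $\tau_\gamma$ denotes the Dehn twist about $\gamma$ and $k \geq n + d_1 + d_2$.

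For the upper bound, the triangle inequality in the curve complex of $\partial H_1 = \partial H_2$ gives $d \leq d(D(H_1), \gamma) + d(\gamma, \tau_\gamma^k(D(H_2)))$. Since $\tau_\gamma^k$ is an isometry of the curve complex fixing $\gamma$ pointwise, the second term equals $d(\gamma, D(H_2)) = d_2$, and the first is $d_1$, giving $d \leq d_1 + d_2$.

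For the lower bound $d \geq d_1 + d_2 - 2$, I would argue by contradiction. Suppose $d \leq d_1 + d_2 - 3$, and let $\beta_0, \beta_1, \ldots, \beta_d$ be a geodesic with $\beta_0 \in D(H_1)$ and $\beta_d \in \tau_\gamma^k(D(H_2))$. Two key facts follow from the hypotheses: $d(\beta_0, \gamma) \geq d_1$, and, using that $\tau_\gamma^{-k}$ is an isometry fixing $\gamma$ and sends $\beta_d$ into $D(H_2)$, $d(\beta_d, \gamma) \geq d_2$. If some vertex $\beta_i$ satisfies $d(\beta_i, \gamma) \leq 1$, then triangle inequality along the geodesic forces $i \geq d_1 - 1$ and $d - i \geq d_2 - 1$, so $d \geq d_1 + d_2 - 2$, contradicting the assumption. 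I may therefore assume $d(\beta_i, \gamma) \geq 2$ for every $i$.

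The main obstacle is this remaining case, where the geodesic avoids the $1$-neighborhood of $\gamma$ entirely and the hypothesis $k \geq n + d_1 + d_2$ must be invoked. My strategy is to introduce a twist coordinate for each curve crossing $\gamma$ essentially, either via the annular subsurface projection of Masur and Minsky or via a more hands-on intersection-number / train-track argument in the spirit of Hempel and Evans. Such a coordinate should satisfy (a) $\tau_\gamma^k$ shifts it by exactly $k$, and (b) two disjoint curves both crossing $\gamma$ essentially have twist coordinates differing by at most a uniform constant. Applying (b) along $\beta_0, \ldots, \beta_d$ bounds the net twist change along the geodesic by roughly $d$, while a parallel analysis along a shortest path from $D(H_1)$ to $D(H_2)$ of length at most $n$ bounds the twist difference between the two disk sets by roughly $n$. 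Combined with (a), these bounds force $k \leq d + n + O(1)$, contradicting $k \geq n + d_1 + d_2$ once $d \leq d_1 + d_2 - 3$. The delicate points will be to calibrate the additive constants so that the explicit threshold $k \geq n + d_1 + d_2$ is sharp rather than suboptimal, and to handle the case where the auxiliary geodesic from $D(H_1)$ to $D(H_2)$ itself dips into the $1$-neighborhood of $\gamma$, where the twist coordinate becomes undefined and the argument must piece things together across the gap.
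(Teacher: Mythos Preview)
Your outline matches the paper's proof closely. The upper bound is identical; for the lower bound the paper takes exactly your ``hands-on'' route, defining a twisting number for arcs in an annular neighborhood $N$ of $\gamma$ relative to a fixed pants decomposition $\mathcal{P}_1$ of $H_1$, and proving your properties (a) and (b) with constant $1$ (disjoint arcs in $N$ have twisting numbers differing by at most $1$, and $\tau_\gamma^k$ shifts twisting by exactly $k$).

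Two places where the paper is more concrete than your sketch. First, your worry about the auxiliary geodesic from $D(H_1)$ to $D(H_2)$ dipping into the $1$-neighborhood of $\gamma$ is unfounded: that geodesic has length at most $n < d_1 + d_2 - 2$, so the same pigeonhole argument you used for the main geodesic forces every vertex on it to be distance $\geq 2$ from $\gamma$. Second, the step you gloss over --- bounding the twist of the \emph{specific} meridian $\tau_\gamma^{-k}(\beta_d) \in D(H_2)$, not merely the endpoint of the auxiliary path --- is handled by observing that every meridian of $H_2$ either lies in a pants decomposition $\mathcal{P}_2$ of $H_2$ or contains a wave relative to $\mathcal{P}_2$, and hence has an arc in $N$ with twisting number $0$ relative to $\mathcal{P}_2$. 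This forces all meridians of $H_2$ to have twisting numbers (relative to $\mathcal{P}_1$) within $2$ of one another, yielding the uniform bound $n+2$ and the sharp threshold $k \geq n + d_1 + d_2$. The Masur--Minsky annular projection would also work but with looser additive constants; the pants-decomposition bookkeeping is what nails down the exact bound.
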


As a separate application of the machinery developed to obtain the above results, we show in Section \ref{sec:evans} that the process described by Evans \cite{Ev} can be replicated with fewer hypotheses and have higher distance than originally proven.

\section{Standard Cut Systems and Pants Decompositions}

Throughout this paper, $\Sigma$ will denote a closed orientable surface with genus $g$ such that $g \geq 2$.

\begin{defin}
A \emph{standard cut system of $\Sigma$} is a collection of $g$ essential simple closed curves $X$ in $\Sigma$ such that $\Sigma - X$ is a $2g$-punctured sphere.
\end{defin}

\begin{defin}
Let $H$ be a genus $g$ handlebody.  A \emph{standard set of compressing disks of $H$} is a collection of disjoint compressing disks $\mcD \subset H$ so that $\partial \mcD$ is a standard cut system in $\partial H$.  A \emph{standard set of meridians of $H$} is any collection of curves in $\partial H$ that bounds a standard set of compressing disks of $H$.  (Thus a standard set of meridians in $\partial H$ is a standard cut system, but not vice versa.)
\end{defin}

\begin{defin}
A \emph{pants decomposition} $\mcP$ of $\Sigma$ is a collection of $3g-3$ essential simple closed curves in $\Sigma$ such that $\Sigma - \mcP$ is a collection of pairs of pants (i.e. three punctured spheres).
\end{defin}

\begin{defin}
A \emph{complete collection of compressing disks} for a handlebody $H$ is a collection of disjoint compressing disks whose boundary is a pants decomposition of $\partial H$. The boundary of such a complete collection of disks is also called a pants decomposition of $H$.  
\end{defin}

\begin{defin}
Let $\mcP$ be a pants decomposition of $\Sigma$.  If $P$ is the closure of a pair of pants component of $\Sigma - \mcP$:
\begin{itemize}
\item a \emph{seam} of $P$ is an essential properly embedded arc in $P$ that has endpoints on distinct components of $\partial P$,
\item a \emph{wave} of $P$ is an essential properly embedded arc in $P$ that has endpoints on the same component of $\partial P$.
\end{itemize}
\end{defin}

\begin{figure}[ht]
\labellist
\footnotesize \hair 2pt
\pinlabel \textcolor{blue}{$w$} at 55 20
\pinlabel \textcolor{cyan}{$s$} at 23 20
\pinlabel $Q$ at 15 45
\endlabellist
\centering \scalebox{1.5}{\includegraphics{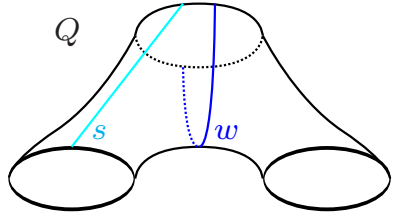}}
\caption{$w$ is a wave and $s$ is a seam of the pair of pants component $Q$.}\label{fig:waveseam}
\end{figure}

\begin{defin}
Let $a$ and $b$ be simple closed curves or arcs with fixed endpoints in $\Sigma$.  Then $a$ and $b$ admit a \emph{bigon} if there is an embedded disk $D$ in $\Sigma$ whose boundary is the endpoint union of a subarc of $a$ and a subarc of $b$.
\end{defin}

\begin{defin}
Two essential simple closed curves in $\Sigma$ (or two properly embedded arcs with fixed endpoints in an essential subsurface of $\Sigma$) \emph{intersect efficiently} if they do not admit a bigon.
\end{defin}

The condition that two curves $a$ and $b$ intersect efficiently is equivalent to the condition that the number of intersection points between $a$ and $b$ is minimal, up to (proper) isotopy (for a proof see \cite{FM}).  

\begin{defin}
Let $\mcP$ be a pants decomposition of $\Sigma$.  An embedded curve $\gamma$ is \emph{$k$-seamed with respect to $\mcP$} if it intersects $\mcP$ efficiently and contains at least $k$ copies of every isotopy class of seams of each pair of pants component of $\Sigma - \mcP$.
\end{defin}

The notion of a 1-seamed curve was introduced by Kobayashi \cite{Ko2}, who denoted such curves to be of \emph{full type}.  Note that if a curve $\gamma$ is $1$-seamed with respect to a pants decomposition $\mcP$ of $\Sigma$ then neither $\gamma$, nor any simple closed curve disjoint from $\gamma$, can contain a wave in any pants component of $\Sigma - \mcP$.

\begin{defin}
Let $\mcP$ be a pants decomposition of $\Sigma$.  A collection of essential simple closed curves $Y = \{y_1,y_2,...,y_t\}$ is \emph{collectively $k$-seamed with respect to $\mcP$} if $Y$ intersects $\mcP$ efficiently and $\bigcup\limits_{1 \leq i \leq t} y_i$ contains at least $k$ copies of every isotopy class of seams of each pair of pants component of $\Sigma - \mcP$.
\end{defin}

\begin{lem} \label{lem:compdiskwave}
Let $\mcD$ be a complete collection of compressing disks for $H$ and $\mcP = \partial \mcD \subset \partial H$.  If $\gamma$ is an essential simple closed curve in $\partial H$ that compresses in $H$ and intersects $\mcP$ efficiently, then $\gamma$ is either isotopic to a curve in $\mcP$ or $\gamma$ contains at least two waves of components of $\partial H - \mcP$.  
\end{lem}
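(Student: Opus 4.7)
\vspace{5pt}
\noindent The plan is to take a compressing disk $E$ for $\gamma$ in $H$ with $\partial E = \gamma$, chosen to minimize $|E \cap \mcD|$ over all such disks, and then to analyze the arcs of intersection on $E$. Since $H$ is irreducible, an innermost disk argument first removes any simple closed curve components of $E \cap \mcD$, so we may assume $E \cap \mcD$ is a (possibly empty) collection of properly embedded arcs in the disk $E$. If $E \cap \mcD = \emptyset$, then $\gamma = \partial E$ lies in a single pair of pants component of $\partial H - \mcP$; since $\gamma$ is essential and every essential simple closed curve in a pair of pants is isotopic to one of its boundary components, $\gamma$ is isotopic to a curve of $\mcP$, and we are done.

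Otherwise, the disjoint arc system $E \cap \mcD$ on the disk $E$ has dual tree with at least two leaves, so there exist at least two outermost arcs on $E$. For such an outermost arc $\alpha$, lying in some $D_i \in \mcD$, let $E' \subset E$ be the cut-off subdisk, so that $\partial E' = \alpha \cup \beta$ with $\beta$ a subarc of $\gamma$ lying in a single pair of pants component $P$ of $\partial H - \mcP$. Efficient intersection of $\gamma$ with $\mcP$ forbids bigons between $\gamma$ and $\mcP$, so $\beta$ is an essential properly embedded arc in $P$; hence $\beta$ is either a seam or a wave.

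The crux is to rule out the seam case. Near $\alpha$, the interior of the subdisk $E'$ lies on one specific local side of $D_i$, and this side remains constant along $\alpha$ by connectedness; propagating this to the endpoints of $\alpha$, the arc $\beta \subset \partial H$ must approach $c_i = \partial D_i$ from the same side of $c_i$ in $\partial H$ at both of its endpoints. Regardless of whether $c_i$ appears once or twice on $\partial P$, this coincidence of sides forces the two endpoints of $\beta$ onto the same boundary component of $P$, so $\beta$ is a wave. Applying this to each of the at least two outermost arcs yields two disjoint wave subarcs contained in $\gamma$, completing the proof. The main obstacle is precisely this side-of-$D_i$ argument eliminating the seam case; everything else is standard innermost-disk cleanup and a leaf count in the dual tree of the arc system on $E$.
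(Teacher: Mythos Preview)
Your argument is correct and follows essentially the same route as the paper's proof: take a compressing disk for $\gamma$, minimize its intersection with $\mcD$, remove closed curves by irreducibility, and then look at outermost arcs to produce the waves. The paper simply asserts that the outermost subarc $\beta$ ``is therefore a wave'' once it is known to be essential, whereas you supply the missing justification via the side-of-$D_i$ argument (equivalently: $\alpha$ lies on a single copy of $D_i$ after cutting $H$ along $\mcD$, so both endpoints of $\beta$ land on the same boundary component of the pair of pants). This extra step is exactly what is needed to exclude the seam case when a curve of $\mcP$ appears twice on the boundary of a single pair of pants, so your write-up is in fact slightly more complete than the paper's on this point.
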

\begin{proof}
Let $D \subset H$ be the disk that $\gamma$ bounds.  Isotope the interior of $D$ to intersect $\mcD$ minimally.  A standard innermost circle argument, exploiting the irreducibility of $H$, guarantees that each component of $D \cap \mcD$ is an arc.  If $D \cap \mcD = \emptyset$ then $\gamma$ is isotopic to a curve in $\mcP$. So suppose $D \cap \mcD \neq \emptyset$.

Let $\alpha$ be an outermost arc of $D \cap \mcD$ in $D$.  Then $\alpha$ and an arc $\beta \subset \gamma$ together bound a disk in $D$ that is disjoint from $\mcD$ in its interior.  As $\gamma$ intersects $\mcP$ efficiently, $\beta$ is an essential arc in a component of $\Sigma - \mcP$ and is therefore a wave.  Since $D - \mcD$ contains at least two distinct disks cut off by outermost arcs, $\gamma$ contains at least two distinct waves. \end{proof}

Note that Lemma \ref{lem:compdiskwave} implies that any meridian of $H$ intersects a $k$-seamed curve in at least $2k$ points.

\section{Dehn Twists and Twisting Number}

We first reproduce a definition of a standard Dehn twist as described in \cite{FM}.

\begin{defin}
Let $A$ be an annulus $S^1 \times [0,1]$ embedded in the $(\theta,r)$-plane by the map $(\theta,t) \rightarrow (\theta, t+1)$ and let the standard orientation of the plane induce an orientation on $A$.  Let $T: A \rightarrow A$ be the \emph{left twist map} of $A$ given by $T(\theta,t) = (\theta + 2 \pi t, t)$.  Similarly, let the \emph{right twist map} be given by $T^{-1}(\theta,t) = (\theta - 2 \pi t, t)$.
\end{defin}

\begin{defin}
Let $y$ be a simple closed curve in $\Sigma$ and $N$ an annular neighborhood of $y$.  Define $\phi$ to be an orientation-preserving homeomorphism from $A$ to $N$.  Then the \emph{Dehn twist operator along $y$} is the homeomorphism $\tau_y: \Sigma \rightarrow \Sigma$ given by:

\begin{displaymath}
   \tau_y = \left\{
     \begin{array}{ll}
       \phi \circ T \circ \phi^{-1}(p) & \mbox{if $p \in N$,}\\
       p & \mbox{if $p \in \Sigma - N$.}
     \end{array}
   \right.
\end{displaymath} 

$\tau_y$ is well-defined up to isotopy.  
\end{defin}

\begin{rmk}
In this paper, we have picked the convention that all positive powers of the Dehn twist operator will apply a left twist.
\end{rmk}

\begin{defin}
Suppose $Y = \{y_1,y_2,...,y_t\}$ is a collection of pairwise disjoint simple closed curves in $\Sigma$.  Let $\tau_Y: \Sigma \rightarrow \Sigma$ denote the composition of Dehn twists $\tau_{y_1} \circ \tau_{y_2} \circ ... \circ \tau_{y_n}$.  Since the $y_i$ are pairwise disjoint, it is easy to see that $\tau_Y$ is independent of the ordering of $\{y_i\}$.
\end{defin}

\begin{defin}
Suppose $N \subset \Sigma$ is an annulus with a specified $I$-fibration in an oriented surface $\Sigma$, and $c$ is a properly embedded arc in $N$ with fixed endpoints that intersects each $I$-fiber of $N$ efficiently.  Let $p$ be a point of intersection between $c$ and $\partial N$.  Then denote $v_{c}$ to be the inwards tangent vector of $c$ based at $p$.  Similarly, let $v_I$ be the inwards tangent vector based at $p$ of the $I$-fiber of $N$ that has $p$ as an endpoint.  Then $c$ \emph{turns left in $N$ at $p$} (resp. \emph{turns right in $N$ at $p$}) if the orientation determined by the pair $<v_I, v_{c}>$ (resp. the pair $<v_{c}, v_I>$) matches the corresponding orientation of the tangent space of $\Sigma$ at $p$.
\end{defin}

\begin{figure}[ht]
\labellist
\footnotesize \hair 2pt
\pinlabel \textcolor{green}{$c$} at 20 12
\pinlabel $N$ at -3 30
\pinlabel $v_{c}$ at 30 22
\pinlabel $v_I$ at 57 19
\pinlabel $p$ at 63 0
\endlabellist
\centering \scalebox{1.5}{\includegraphics{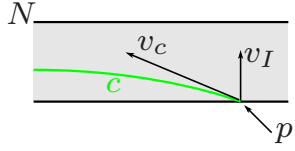}}
\caption{$c$ turns left in $N$.}\label{fig:turnleft}
\end{figure}

\begin{defin}
Let $N$ be an annulus and let $X = \{x_1,x_2,...,x_s\}$ be a collection of pairwise disjoint simple closed curves in $\Sigma$.  Then $N$ is \emph{fibered with respect to $X$} if $N$ has an $I$-fibration such that $N \cap X$ consists of $I$-fibers of $N$.
\end{defin}

The definition of the twisting number between two simple closed curves was first introduced by Lustig and Moriah in \cite{LM}.  To handle some additional subleties required for our arguments, we provide a modified definition.  First, we give the definition of the twisting number for a properly embedded essential arc with fixed endpoints in an annulus and later generalize to the twisting number between two simple closed curves.

\begin{defin}
Let $X = \{x_1,x_2,...,x_s\}$ be a collection of pairwise disjoint curves in $\Sigma$.  Suppose $N$ is an annulus in $\Sigma$ that is fibered with respect to $X$.  Let $c$ be an essential properly embedded arc in $N$ with fixed endpoints disjoint from $X \cap N$ that intersects each $I$-fiber of $N$ efficiently.  Set $m = \dfrac{|c \cap X|}{|N \cap X|}$.  Then $c$ has \emph{twisting number $m$} (resp. \emph{twisting number $(-m)$}) in $N$ with respect to $X$ if it turns left (resp. turns right) in $N$ at its endpoints.
\end{defin}

\begin{figure}[ht]
\labellist
\footnotesize \hair 2pt
\pinlabel {$B \subset \mathbb{R}^2$} at 0 190
\pinlabel {$A \subset \mathbb{R}^2$} at 0 100
\pinlabel $\tilde{T}$ at 112 145
\pinlabel $T$ at 112 56
\pinlabel -2 at 0 135
\pinlabel -1 at 25 135
\pinlabel 0 at 49 135
\pinlabel 1 at 73 135
\pinlabel 2 at 97 135
\pinlabel -1 at 45 116
\pinlabel 1 at 45 164
\pinlabel 2 at 45 188
\pinlabel -2 at 120 135
\pinlabel -1 at 145 135
\pinlabel 0 at 169 135
\pinlabel 1 at 193 135
\pinlabel 2 at 217 135
\pinlabel 1 at 165 164
\pinlabel 2 at 165 188
\pinlabel -1 at 165 116
\pinlabel -2 at 0 48
\pinlabel -1 at 25 48
\pinlabel 0 at 49 48
\pinlabel 1 at 73 48
\pinlabel 2 at 97 48
\pinlabel -2 at 48 0
\pinlabel -1 at 49 24
\pinlabel 1 at 49 72
\pinlabel 2 at 49 96
\pinlabel -2 at 120 48
\pinlabel -1 at 145 48
\pinlabel 0 at 169 48
\pinlabel 1 at 193 48
\pinlabel 2 at 217 48
\pinlabel -2 at 168 0
\pinlabel -1 at 169 24
\pinlabel 1 at 169 72
\pinlabel 2 at 169 96
\endlabellist
\centering \scalebox{1.5}{\includegraphics{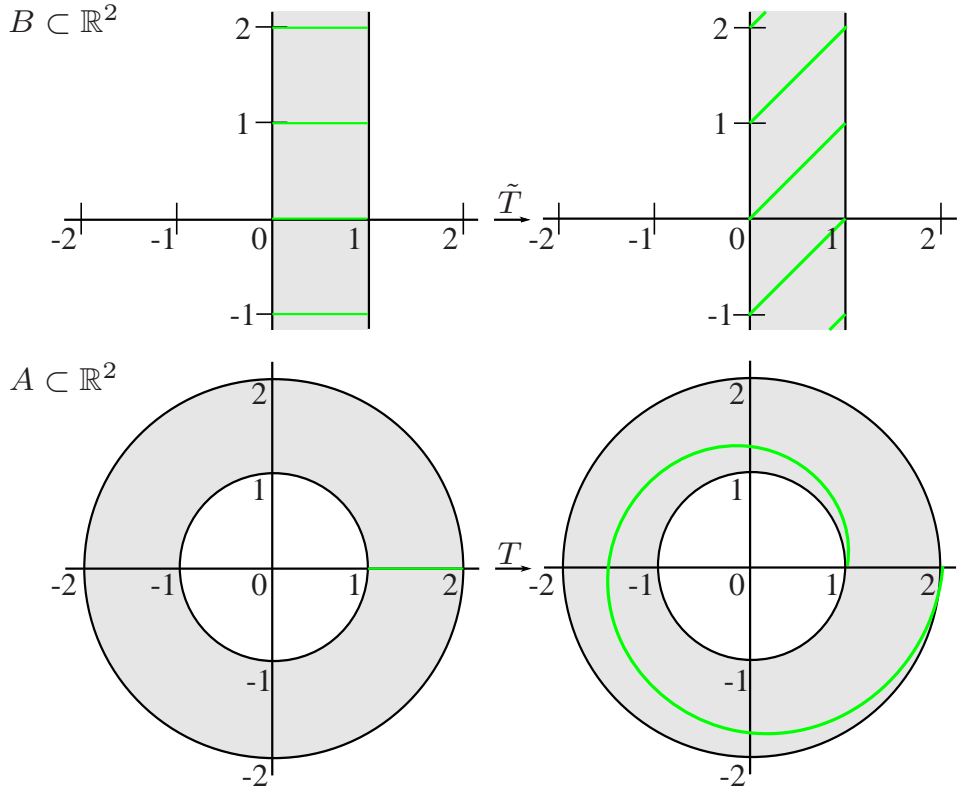}}
\caption{The left twist map $T$ on the annulus $A$ and the induced map $\tilde{T}$ on $B = [0,1] \times \mathbb{R}$.}\label{fig:BtoA}
\end{figure}

In the following lemmas, it is helpful to work in the universal cover of the annulus.  Recall that $A$ is an annulus $S^1 \times [0,1]$ embedded in the $(\theta,r)$-plane by the map $(\theta,t) \rightarrow (\theta,t+1)$.  Then $B = [0,1] \times \mathbb{R} \subset \mathbb{R}^2$ is the universal cover of $A$ with covering map $\psi:B \rightarrow A$ given by $\psi(x,y) = (2 \pi y,x+1)$.  The left twist map $T: A \rightarrow A$ induces a homeomorphism $\tilde{T}:B \rightarrow B$ given by $\tilde{T}(x,y) = (x,y+x)$ (see Figure \ref{fig:BtoA}) so that the following diagram commutes:
\begin{equation*} \begin{CD}
B @>{\tilde{T}}>> B\\
@VV{\psi}V @VV{\psi}V\\
A @>{T}>> A
\end{CD}
\end{equation*}

\vspace{5pt}

If $N$ is an embedded annulus in $\Sigma$, we can compose $\psi$ with an orientation-preserving homeomorphism $\phi: A \rightarrow N$ to consider $B$ as the universal cover of $N$.  Choose $\phi$ so that the $I$-fibers of $N$ lift to horizontal arcs in $B$.  A properly embedded arc in $N$ that intersects the $I$-fibers of $N$ efficiently can be isotoped rel its endpoints so that it lifts to a collection of straight line segments in $B$, each with the same slope, that differ by a vertical translation of an integral distance.  Applying the induced left twist map $\tilde{T}$ (resp. right twist map) to a straight line segment in $B$ then increases (resp. decreases) the slope by 1.  

Now let $X = \{x_1,x_2,...,x_s\}$ be a collection of pairwise disjoint simple closed curves on $\Sigma$ that intersect $N$ in $I$-fibers. Suppose $n = |N \cap X|$ and then, after possibly rechoosing $\phi$, the collection of all lifts of the arcs of $N \cap X$ are exactly the horizontal arcs in $B$ of the form $[0,1] \times \frac{z}{n}$ for all $z \in \mathbb{Z}$.  Then a properly embedded straight line segment $\alpha$ in $B$ with endpoints disjoint from lifts of $N \cap X$ and with slope $m$ projects to a properly embedded arc in $N$ with twisting number $\frac{a}{n}$ with respect to $X$, where $a \in \mathbb{Z}$ is chosen so that $|a|$ is the total number of intersections between $\alpha$ and lifts of $N \cap X$, and $a$ and $m$ share the same sign (see Figure \ref{fig:cover}).

\begin{figure}[ht]
\labellist
\footnotesize \hair 2pt
\pinlabel \textcolor{green}{$c$} at 47 87
\pinlabel \textcolor{red}{$X$} at 93 95
\pinlabel $N$ at 0 24
\pinlabel \textcolor{red}{$X$} at 187 12
\pinlabel \textcolor{green}{$\tilde{c}$} at 158 64
\pinlabel {0} at 140 16
\pinlabel {$1$} at 140 48
\pinlabel {$2$} at 140 80
\pinlabel {0} at 147 0
\pinlabel {1} at 172 0
\pinlabel {$B = [0,1] \times \mathbb{R}$} at 160 98
\endlabellist
\centering \scalebox{1.5}{\includegraphics{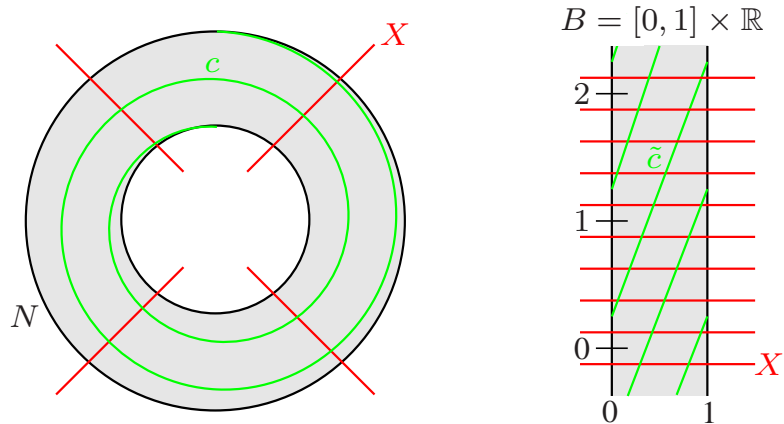}}
\caption{An arc $c$ with twisting number 2 with respect to $X$ lifts to arcs with slope 2 in $B = [0,1] \times \mathbb{R}$.}\label{fig:cover}
\end{figure}

With this perspective we can make the following observations.

\begin{lem} \label{lem:disjointcircling}
Let $X = \{x_1,x_2,...,x_s\}$ be a collection of pairwise disjoint curves in $\Sigma$.  Suppose $N$ is an annulus in $\Sigma$ that is fibered with respect to $X$.  Let $c_1$ and $c_2$ be disjoint properly embedded arcs in $N$ with twisting numbers $m_1$ and $m_2$ in $N$ respectively with respect to $X$.  Then $|m_1 - m_2| \leq 1$.
\end{lem}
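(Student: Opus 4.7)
The plan is to pass to the universal cover $B = [0,1] \times \mathbb{R}$ of $N$ set up in the paragraph preceding the lemma and derive a contradiction from the assumption $|m_1 - m_2| > 1$. Choose a covering map $\psi \circ \phi^{-1} : B \to N$ so that the $I$-fibers of $N$ lift to horizontal arcs and the lifts of $N \cap X$ are precisely the horizontal segments $[0,1] \times \{z/n\}$ for $z \in \mathbb{Z}$, where $n = |N \cap X|$. After an isotopy rel endpoints, each $c_i$ lifts to a family of parallel straight line segments; by the discussion just before the lemma, the signed slope of the lifts of $c_i$ is exactly $m_i$, with positive slope corresponding to a left turn and negative slope to a right turn.

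The first step is the standard observation that $c_1 \cap c_2 = \emptyset$ in $N$ if and only if every pair of lifts is disjoint in $B$: a transverse intersection of lifts projects to an intersection in $N$, and conversely any intersection point in $N$ has preimages in $B$. Fix one lift $\tilde{c}_1$ of $c_1$ with endpoints $(0, a_1)$ and $(1, a_1 + m_1)$, and let $\{\tilde{c}_2^{(k)}\}_{k \in \mathbb{Z}}$ be the collection of lifts of $c_2$, where $\tilde{c}_2^{(k)}$ has endpoints $(0, a_2 + k)$ and $(1, a_2 + k + m_2)$. As line segments spanning the strip $[0,1] \times \mathbb{R}$, the pair $\tilde{c}_1$ and $\tilde{c}_2^{(k)}$ are disjoint precisely when the vertical differences $\Delta_k := (a_2 + k) - a_1$ at $x = 0$ and $\Delta_k + (m_2 - m_1)$ at $x = 1$ have the same (nonzero) sign, i.e.\ when $\Delta_k$ lies outside the open interval with endpoints $0$ and $m_1 - m_2$.

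Now assume for contradiction that $|m_1 - m_2| > 1$. Then the open interval with endpoints $0$ and $m_1 - m_2$ has length greater than $1$, so it contains at least one element of the coset $\{a_2 - a_1 + k : k \in \mathbb{Z}\} = \{\Delta_k\}$. For such a $k$, the lift $\tilde{c}_2^{(k)}$ crosses $\tilde{c}_1$ transversely in $B$, contradicting disjointness of $c_1$ and $c_2$ in $N$. Hence $|m_1 - m_2| \leq 1$.

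The main place to be careful is the sign bookkeeping in identifying the twisting number with the signed slope: the left twist map corresponds to $\tilde{T}(x, y) = (x, y + x)$, which increases slope by $1$ and matches the turn-left convention used to give the twisting number its sign. Once that identification is pinned down, the argument collapses to the elementary planar observation that two line segments in $[0,1] \times \mathbb{R}$ with slopes differing by more than $1$ cannot avoid all integer vertical translates of each other.
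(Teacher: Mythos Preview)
Your proof is correct and follows essentially the same approach as the paper: both lift to the universal cover $B=[0,1]\times\mathbb{R}$, identify twisting numbers with slopes of the straightened lifts, and use that disjointness in $N$ forces all integer vertical translates of one lift to miss the other. The only cosmetic difference is that the paper phrases the final step geometrically (a lift of $c_2$ must sit inside a parallelogram of vertical height $1$ between consecutive lifts of $c_1$, bounding its slope), whereas you compute explicitly when two spanning segments cross and invoke a pigeonhole on the coset $\{\Delta_k\}$; these are the same observation.
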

\begin{proof}
Let $n = |N \cap X|$ and let $B = [0,1] \times \mathbb{R}$ denote the universal cover of $N$ with covering map as defined previously so that the lifts of the arcs $N \cap X$ is the set $\{[0,1] \times \frac{z}{n} | z \in \mathbb{Z}\}$.  Then $c_1$ and $c_2$ can be isotoped so that the collection of all of their lifts, $\tilde{c_1}$ and $\tilde{c_2}$ respectively, are a collection of linear arcs in $B$ with slopes equal to their twisting numbers, i.e. $m_1$ and $m_2$.  Since $c_2$ is disjoint from $c_1$, any lift of $c_2$ must lie in a parallelogram-shaped region $R$ of $B - \tilde{c_1}$.  The vertical distance between any consecutive lifts of $c_1$ in $B$ is equal to 1, so the slope $m_2$ of $c_2$ must satisfy $m_1 - 1 \leq m_2 \leq m_1 + 1$. 
\end{proof}

\begin{lem} \label{lem:LMtwist}
Let $X = \{x_1,x_2,...,x_s\}$ be a collection of pairwise disjoint curves in $\Sigma$.  Suppose $y$ is a simple closed curve in $\Sigma$ and $N$ is an annular neighborhood of $y$ that is fibered with respect to $X$.  Let $c$ be a properly embedded arc in $N$ with twisting number $m$ with respect to $X$.  Then $\tau^k_y(c)$ has twisting number $m+k$ in $N$ with respect to $X$.
\end{lem}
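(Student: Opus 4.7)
The plan is to reduce everything to a straightforward slope calculation in the universal cover $B = [0,1] \times \mathbb{R}$ set up in the paragraphs preceding the lemma. After an isotopy rel endpoints, I would take $c$ to be an arc whose lifts to $B$ are parallel straight line segments; the preceding discussion identified the common signed slope of these lifts with the signed twisting number of $c$, so the lifts have slope $m$. By the same setup, the lifts of the arcs of $N \cap X$ are the horizontal segments $[0,1] \times \{z/n\}$ for $z \in \mathbb{Z}$, where $n = |N \cap X|$.

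Next I would use the commutative diagram to observe that $\tau_y$, which on $N$ agrees with $T$ on $A$ via the chosen homeomorphism $\phi$, lifts to $\tilde{T}(x,y) = (x, y+x)$; iterating gives $\tilde{T}^k(x,y) = (x, y+kx)$. This shear sends a straight line segment with endpoints on the two sides $\{0\} \times \mathbb{R}$ and $\{1\} \times \mathbb{R}$ and slope $m$ to a straight line segment with the same endpoints and slope $m+k$ (recall that $\tau_y$ fixes $\partial N$ pointwise, so the endpoints of the lift are unchanged). Projecting by $\psi$, the arc $\tau_y^k(c)$ thus lifts to parallel straight line segments of slope $m+k$ in $B$.

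Finally, I would invoke the slope--twisting-number dictionary established immediately before the lemma and depicted in Figure \ref{fig:cover}: an arc in $N$ whose lifts to $B$ are straight line segments of signed slope $s$ has twisting number $s$ with respect to $X$. Applying this dictionary in reverse to the line segments of slope $m+k$ produced above, we conclude that $\tau_y^k(c)$ has twisting number $m+k$, as desired.

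The only real obstacle is bookkeeping the sign conventions -- verifying that a positive (left) twist really does lift to the shear that increases slope, and that positive slope in $B$ corresponds to turning left in $N$ at the endpoints of the projected arc. Both conventions are pinned down in the definitions preceding the lemma, so once they have been cross-referenced the result reduces to the elementary observation that $\tilde{T}^k$ adds $k$ to the slope of any straight line segment in $B$.
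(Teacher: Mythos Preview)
Your proposal is correct and follows essentially the same approach as the paper's own proof: both pass to the universal cover $B$, identify the lifts of $c$ with straight line segments of slope $m$, apply the shear $\tilde{T}^k$ to obtain segments of slope $m+k$, and project back down to $N$. Your additional remarks on sign bookkeeping are helpful commentary but do not change the underlying argument.
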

\begin{proof}
Again let $n = |N \cap X|$ and let $B = [0,1] \times \mathbb{R}$ be the universal cover of $N$ such that the set of all lifts of $N \cap X$ is equal to $\{[0,1] \times \frac{z}{n} | z \in \mathbb{Z}\}$.  Then $c$ can be isotoped so that the collection of all of its lifts in $B$, denoted as $\tilde{c}$, is a collection of linear arcs in $B$ with slope $m$.  Applying $\tilde{T}^k$ to $B$ sends each linear arc of $\tilde{c}$ with slope $m$ to a linear arc with slope $m+k$.  Projecting a component of $\tilde{T}^k(\tilde{c})$ back down to $N$ yields a properly embedded arc in $N$ that agrees with $\tau^k_y(c)$ and has twisting number $m+k$ with respect to $X$.
\end{proof}

The next goal is to define the twisting number for a simple closed curve $\gamma$ about a simple closed curve $y$ that $\gamma$ intersects efficiently.  This is more complicated than defining the twisting number for an arc in an annulus; given an annular neighborhood $N$ of $y$, ambient isotopies of $\gamma$ can modify the twisting number of any component of $\gamma \cap N$.

Let $X = \{x_1,x_2,...,x_s\}$ and $Y = \{y_1,y_2,...,y_t\}$ be collections of pairwise disjoint curves such that the components of $X$ and $Y$ intersect efficiently, and suppose $N = \{N_1,N_2,...,N_t\}$ is a collection of pairwise disjoint annuli such that each $N_i$ is an annular neighborhood of $y_i$ and $N_i$ is fibered with respect to $X$.  

\begin{defin}
If $\gamma$ is a simple closed curve that intersects $X$, $Y$, $\partial N$, and each $I$-fiber of $N$ efficiently, then we will say $\gamma$ is in \emph{efficient position} with respect to $(X,Y,N)$.
\end{defin}

We will also require the use of the following helpful fact (see \cite{HS} for a proof):

\begin{lem} \label{lem:efficientisotopy}
Let $Y_1,...,Y_{n-1}$ each be a collection of disjoint essential simple closed curves in $\Sigma$ so that each pair $Y_i$, $Y_j$ intersects efficiently.  If $Y_n$ is another collection of disjoint essential simple closed curves, then $Y_n$ can be isotoped to intersect each of the other sets efficiently without disturbing the efficient intersection of the others.  Moreover, as long as no component of $Y_n$ is parallel to a component of any $Y_i$, any two embeddings of $Y_n$ that intersect $Y_1,...,Y_{n-1}$ efficiently are isotopic through an isotopy which keeps all intersections efficient (though during the isotopy $Y_n$ may pass over intersection points of $Y_i$ with $Y_j$).
\end{lem}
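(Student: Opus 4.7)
The plan is to establish both statements via the bigon criterion together with an innermost-disk analysis that is sensitive to all of the collections simultaneously. Recall that two essential simple closed curves intersect efficiently precisely when they admit no bigon.

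For existence, I would first place $Y_n$ transverse to $\bigcup_{i<n}Y_i$. If $Y_n$ fails to intersect some $Y_{i_0}$ efficiently, the bigon criterion supplies a bigon. Among all bigons between $Y_n$ and any $Y_i$ with $i<n$, I would choose one $D$ whose interior meets $\bigcup_{i<n}Y_i$ in as few arcs as possible, writing $\partial D = \alpha \cup \beta$ with $\alpha \subset Y_n$ and $\beta \subset Y_i$. The main step is to classify the arcs of $Y_j \cap \mathring{D}$ for $j \ne i$: an innermost arc of $Y_j \cap \mathring{D}$ with both endpoints on $\beta$ would, together with a subarc of $\beta$, bound a bigon between $Y_j$ and $Y_i$, contradicting the hypothesized efficient intersection of $Y_i$ with $Y_j$; and an innermost arc with both endpoints on $\alpha$ would bound a bigon between $Y_n$ and $Y_j$ with strictly fewer interior arcs of $\bigcup_{i<n}Y_i$ than $D$, contradicting the minimal choice of $D$. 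Hence every arc of $Y_j \cap \mathring{D}$ runs from $\alpha$ to $\beta$. Isotoping $\alpha$ across $D$ to a small pushoff of $\beta$ then removes two points from $Y_n \cap Y_{i_0}$; since each surviving arc contributes exactly one intersection with $Y_n$ both before and after, $|Y_n \cap Y_j|$ is unchanged for $j \ne i_0$, and no intersection among $Y_1,\ldots,Y_{n-1}$ moves at all. Iterating removes the inefficiency with $Y_{i_0}$ without spoiling any existing efficient intersection.

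For uniqueness, the plan is to endow $\Sigma$ with a hyperbolic metric and use that each essential simple closed curve has a unique closed geodesic representative. The non-parallelism hypothesis ensures that the geodesic representatives of the components of $Y_n$ and of $\bigcup_{i<n}Y_i$ are all pairwise distinct, so the geodesic intersection pattern between $Y_n$ and each $Y_i$ is the unique minimal pattern. Any efficient embedding of $Y_n$ has the same combinatorial intersection data with each $Y_i$ as the geodesic representative, so two efficient embeddings $Y_n$ and $Y_n'$ share this data. I would then work in the universal cover $\widetilde{\Sigma}$ to build an equivariant homotopy between the respective lifts of $Y_n$ and $Y_n'$ that preserves the combinatorial intersection pattern with lifts of $\bigcup_{i<n}Y_i$ at every time, and descend it to an ambient isotopy of $\Sigma$.

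The main obstacle is the uniqueness half: ensuring that the descending family is a genuine ambient isotopy and that each intermediate stage of $Y_n$ meets every $Y_i$ efficiently. Generically one cannot avoid passing through triple points of the form $Y_n \cap Y_i \cap Y_j$, because forcing $Y_n$ to skirt such a triple point introduces a transient bigon; permitting $Y_n$ to sweep through the triple point cancels the would-be bigon precisely, which is why the caveat about passing over intersections of $Y_i$ with $Y_j$ must be allowed in the statement.
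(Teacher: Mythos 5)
The paper does not actually prove this lemma; it is quoted as a known fact with a citation to \cite{HS}, so there is no in-paper argument to compare against and your proposal has to stand on its own. The existence half does: your innermost-bigon argument is complete and correct. Choosing a bigon $D$ between $Y_n$ and some $Y_{i_0}$ with the fewest interior arcs of $\bigcup_{i<n} Y_i$, ruling out arcs of $Y_j\cap\mathring D$ with both endpoints on $\beta$ (they would cobound a bigon between $Y_j$ and $Y_{i_0}$) or both on $\alpha$ (they would cobound a smaller bigon with $Y_n$, violating minimality), and then sweeping $\alpha$ across $D$ removes two points of $Y_n\cap Y_{i_0}$ while preserving every other geometric intersection number; since efficiency is equivalent to minimality of intersection number, no previously efficient pair is spoiled, and the total count $\sum_i |Y_n\cap Y_i|$ strictly decreases, so the process terminates. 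This is exactly the standard argument.

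The uniqueness half, however, is a plan rather than a proof, and the part you defer is precisely the content of the claim. Two issues. First, the assertion that any efficient embedding of $Y_n$ ``has the same combinatorial intersection data with each $Y_i$ as the geodesic representative'' is neither defined nor justified; what one actually knows a priori is only that the intersection \emph{numbers} agree, and upgrading this to a statement about the cyclic order and pattern of intersections along each curve requires an argument (it is here that lifting to the universal cover and comparing the linking patterns of endpoints of geodesic lifts on $\partial\widetilde\Sigma$ does real work). Second, and more seriously, producing an ambient isotopy from an equivariant homotopy of lifts, and arranging that every intermediate position of $Y_n$ remains efficient with respect to every $Y_i$ --- with the only degenerate events being transverse passages through points of $Y_i\cap Y_j$ --- is the hard part of the Freedman--Hass--Scott/Hass--Scott machinery that the paper is citing. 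You correctly identify this as ``the main obstacle,'' but identifying an obstacle is not the same as overcoming it: as written, the uniqueness statement is being assumed in the step ``descend it to an ambient isotopy'' rather than proved. To close the gap you would need either to reproduce the Hass--Scott argument (e.g.\ their analysis of innermost ``singular bigons'' for the family $Y_n\cup Y_n'$ relative to the $Y_i$, or the curve-shortening/least-area route) or to cite it explicitly, as the paper does.
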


For our purposes, Lemma \ref{lem:efficientisotopy} implies that as long as $\gamma$ is not parallel to a component of $X$ or $Y$, then between any two embeddings of $\gamma$ that are in efficient position with respect to $(X,Y,N)$, there exists an isotopy $f_t$ (with $t \in [0,1])$ such that $f_t(\gamma)$ intersects $X$, $Y$, and $\partial N$ efficiently for all $t$.  Moreover, after perturbing $f_t$ in $N$, we can assume $f_t(\gamma)$ is in efficient position with respect to $(X,Y,N)$ for all $t$.  Then $f_t$ induces an isotopy $f^N_t$ of $\Sigma$ that is transverse to $\partial N \cup Y$ and that can restrict to an isotopy on $N$.  Let $c_1,c_2,...,c_r$ denote the components of $\gamma \cap N$ with twisting numbers $m_1,m_2,...,m_r$ respectively in $N$.  We can associate $c_1,c_2,...,c_r$ with their images under $f^N_t$ and then track the corresponding changes in their twisting numbers.  Using such isotopies, we want to isotope $\gamma$ to maximize $\sum_j |m_j|$.

\begin{defin}
Let $\gamma$ be a simple closed curve that is in efficient position with respect to $(X,Y,N)$.  Suppose there exists a triangle $E$ in $\Sigma$ with a side $s_X$ in $X$, a side $s_N$ in $\partial N$, and a side $s_{\gamma}$ in $\gamma$, such that $\mathring E$ is disjoint from $N$. Then the triangle $E$ will be called an \emph{outer triangle of $N$} (see Figure \ref{fig:triangle}).
\end{defin}

\begin{figure}[ht]
\labellist
\footnotesize \hair 2pt
\pinlabel \textcolor{green}{$s_{\gamma}$} at 42 25
\pinlabel \textcolor{red}{$s_X$} at 6 23
\pinlabel \textcolor{blue}{$c$} at 76 0
\pinlabel $s_{N}$ at 35 4
\pinlabel $N$ at -5 0
\pinlabel $E$ at 27 17
\endlabellist
\centering \scalebox{1.5}{\includegraphics{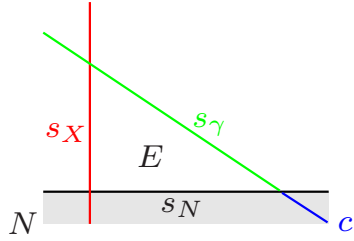}}
\caption{A outer triangle of $N$.}\label{fig:triangle}
\end{figure}

Given an outer triangle $E$ of $N$, we can perform an isotopy supported in an open neighborhood of $E$ of $\gamma$ that pushes $s_{\gamma}$ and all other arcs of $\mathring E \cap \gamma$ into $N$ (see Figure \ref{fig:properties1-5}) such that $\gamma$ will be in efficient position with respect to $(X,Y,N)$ throughout the isotopy. Note that this isotopy increases $\sum_j |m_j|$.

\begin{figure}[ht]
\labellist
\footnotesize \hair 2pt
\pinlabel \textcolor{green}{$\gamma$} at -3 40
\pinlabel \textcolor{blue}{$c_j$} at 72 19
\pinlabel \textcolor{red}{$X$} at 17 48
\pinlabel $N$ at -5 7
\pinlabel $E$ at 25 31
\pinlabel \textcolor{green}{$\gamma$} at 92 40
\pinlabel \textcolor{blue}{$c_j$} at 167 19
\pinlabel \textcolor{red}{$X$} at 113 48
\pinlabel $N$ at 91 7
\endlabellist
\centering \scalebox{1.5}{\includegraphics{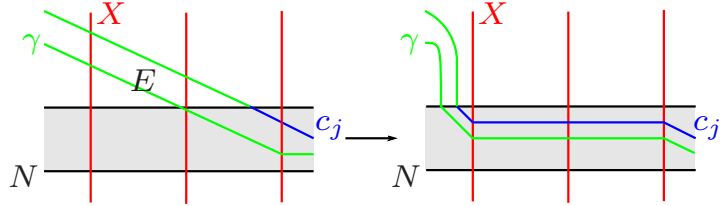}}
\caption{An isotopy of $\gamma$ across an outer triangle of $N$ that increases $|m_j|$.}\label{fig:properties1-5}
\end{figure}

As $|\gamma \cap X|$ is finite, a finite number of isotopies of $\gamma$ across outer triangles of $N$ will yield an embedding of $\gamma$ in efficient position with respect to $(X,Y,N)$ that admits no such outer triangles.  We claim that for such embeddings, $\sum_j |m_j|$ is maximal over all representatives of the isotopy class of $\gamma$ that are in efficient position with respect to $(X,Y,N)$.

\begin{prop}
Suppose $\gamma$ is a simple closed curve in efficient position with respect to $(X,Y,N)$ and $\gamma$ is not parallel to a component of $Y$.  Let $c_1,c_2,...,c_r$ denote the components of $\gamma \cap N$ and let $m_j$ be the twisting number of $c_j$.  Then $\sum_j |m_j|$ is maximal over all embeddings of $\gamma$ that are in efficient position with respect to $(X,Y,N)$ if and only if $\gamma$ admits no outer triangles of $N$.
\end{prop}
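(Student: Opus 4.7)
The plan is to prove the two directions separately. The only-if direction is essentially immediate from the discussion preceding the proposition: if $\gamma$ admits an outer triangle $E$ of $N$, the isotopy supported in a neighborhood of $E$ produces another efficient-position embedding of $\gamma$ with $\sum_j |m_j|$ strictly larger, contradicting maximality.

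For the if direction, assume $\gamma$ has no outer triangles of $N$. I would convert the maximization into a minimum-intersection-number problem. Writing $n = |N \cap X|$, the definition of twisting number gives $n|m_j| = |c_j \cap X|$, so
\[
n \sum_j |m_j| \;=\; |\gamma \cap X \cap N| \;=\; |\gamma \cap X| \;-\; |\gamma \cap X \cap (\Sigma - \mathring N)|.
\]
Since $\gamma$ is in efficient position, $|\gamma \cap X|$ equals the geometric intersection number $i(\gamma, X)$, which is independent of the chosen efficient-position embedding. Thus maximizing $\sum_j |m_j|$ is equivalent to minimizing the number of $\gamma$-$X$ intersections lying outside of $N$.

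The next step is to reduce this to a statement about arcs in the surface-with-boundary $\Sigma - \mathring N$. By Lemma \ref{lem:efficientisotopy} and the hypothesis that $\gamma$ is not parallel to a component of $Y$, any two efficient-position embeddings of $\gamma$ are related by an efficient-position isotopy in $\Sigma$, which restricts to a proper isotopy of the arc system $\gamma \cap (\Sigma - \mathring N)$. I would then invoke the standard fact that two properly embedded systems of arcs and curves in a compact surface with boundary achieve minimal geometric intersection (over proper isotopies of one of them) if and only if they admit no bigons and no triangular disks whose boundary consists of one arc from each system together with one arc of the boundary. Bigons in $\Sigma - \mathring N$ between $\gamma$ and $X$ would give bigons in $\Sigma$, contradicting efficient intersection; triangular disks of the stated form are precisely outer triangles of $N$. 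So the no-outer-triangle hypothesis forces $|\gamma \cap X \cap (\Sigma - \mathring N)|$ to be at its minimum, and therefore $\sum_j |m_j|$ to be at its maximum.

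The main obstacle is reconciling the two notions of isotopy: the bigon-triangle minimality criterion concerns arbitrary proper isotopies of the arc system in $\Sigma - \mathring N$, whereas we are only allowed efficient-position isotopies of $\gamma$ in the ambient $\Sigma$. The key observation closing this gap is that every elementary slide of an endpoint of some $c_j$ along $\partial N$ past a point of $X \cap \partial N$ corresponds to an outer- or inner-triangle move of $\gamma$ in $\Sigma$, and Lemma \ref{lem:efficientisotopy} guarantees that such moves suffice to realize any proper isotopy of the arc system while preserving efficient position in $\Sigma$ (with the arcs $c_j$ inside $N$ adjusted to the unique efficient representatives determined by their new endpoint configurations).
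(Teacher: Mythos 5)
Your argument is correct in substance but takes a genuinely different route from the paper. The paper proves the ``if'' direction dynamically: given an isotopy $f_t$ that increases $\sum_j|m_j|$, it tracks the first time $t_*$ at which an intersection point of $\gamma\cap X$ lying outside $N$ is pushed onto $\partial N$, and shows that the subarc of $\gamma$ between that point and the corresponding point of $\gamma\cap\partial N$ is the third side of an outer triangle. You instead make a static reduction: the identity $n\sum_j|m_j| = |\gamma\cap X| - |\gamma\cap X\cap(\Sigma-\mathring N)|$, together with the isotopy-invariance of $|\gamma\cap X|$ in efficient position, converts maximality of the twisting sum into minimality of the intersection count of the arc systems $\gamma\cap(\Sigma-\mathring N)$ and $X\cap(\Sigma-\mathring N)$ in the bordered surface, which you settle with the bigon/half-bigon criterion for minimal position of properly embedded arcs; bigons are excluded by efficiency in $\Sigma$, and half-bigons are exactly outer triangles. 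This is a clean conceptual repackaging, at the cost of importing the arc version of the bigon criterion (which the paper never needs) and of verifying it for systems of disjoint arcs rather than single arcs.

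Two caveats. First, your closing paragraph addresses the wrong direction of the comparison between isotopy classes: you do not need efficient-position moves to realize an arbitrary proper isotopy of the arc system. The only containment required is the one you already stated, namely that any efficient-position embedding of $\gamma$ restricts to an arc system properly isotopic to $\gamma\cap(\Sigma-\mathring N)$, so that minimality over all proper isotopies gives a fortiori minimality over the subclass of configurations actually realized. The ``key observation'' of that paragraph is therefore unnecessary, and as stated it is also the harder claim to justify; I would delete it. Second, Lemma \ref{lem:efficientisotopy} requires $\gamma$ not to be parallel to a component of $X$ as well as of $Y$; the paper disposes of the $X$-parallel case separately at the outset (there $\gamma\cap X=\emptyset$, there are no outer triangles, and $\sum_j|m_j|$ is identically zero), and your argument should do the same before invoking that lemma.
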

\begin{proof}
Note that in the case that $\gamma$ is parallel to a component of $X$, then $\gamma$ admits no outer triangles (since it is disjoint from $X$) and $\sum_j |m_j| = 0$ is maximal.

If $\gamma$ is an embedding in efficient position with respect to $(X,Y,N)$ such that $\sum_j |m_j|$ is maximal, then $\gamma$ admits no outer triangles, as otherwise an isotopy across such a triangle would increase $\sum_j |m_j|$.

Conversely, suppose that $\gamma$ admits no outer triangles of $N$, but $\sum_j |m_j|$ is not maximal.  Then there exists an isotopy $f_t$ of $\gamma$ that increases $\sum_j |m_j|$.  Moreover, by Lemma \ref{lem:efficientisotopy} we can assume $f_t(\gamma)$ intersects $X$, $Y$, and $\partial N$ efficiently for all $t \in [0,1]$.  This implies $f_t$ induces isotopies $f_t^X$ and $f_t^N$ which are transverse to $X$ and $Y \cup \partial N$ respectively such that $f_t^X(\gamma) = f_t^N(\gamma) = f_t(\gamma)$ for all $t$. 

Since $f_t$ increases $\sum_j |m_j|$, there exists a point $p \in \gamma \cap X$ such that $p$ lies off of $N$, but $f_1^X(p)$ lies in $N$.  Let $t_*$ denote the smallest value such that $f_{t_*}^X(p) \in \partial N$.  As $f_{t_*}^X(p)$ lies in $\partial N$, $f_{t_*}^X(p) = f_{t_*}^N(q)$ for some point $q \in \gamma \cap \partial N$.

Since we chose $t_*$ to be the smallest value such that $f_{t_*}^X(p) \in \partial N$, there exists a subarc $s$ of $\gamma$ connecting $p$ to $q$ such that $\mathring s$ is disjoint from $N$.  Then as $f_t^X$ pushes $p$ along $X$ across $\partial N$ and $f_t^N$ pushes $q$ along $\partial N$ across $X$, $s$ must form the side of an outer triangle $E$ of $N$, where the other two sides of $E$ consist of a subarc of $X$ connecting $p$ to $f_{t_*}^X(p)$ and a subarc of $\partial N$ connecting $q$ to $f_{t_*}^N(q)$.  This contradicts our assumption that $\gamma$ admitted no such outer triangles of $N$, and hence $\sum_j |m_j|$ is maximal.
\end{proof}

We can now provide the definition of twisting number about a simple closed curve.

\begin{defin}
Let $y$ be a simple closed curve that intersects $X$ efficiently and $N$ be an annular neighborhood of $y$ that is fibered with respect to $X$.  Suppose $\gamma$ is an essential simple closed curve that intersects $y$ non-trivially.  Then define the \emph{twisting number of $\gamma$ in $N$}, denoted as $tw(\gamma,N)$, to be the maximum twisting number of a component of $\gamma \cap N$ over all representatives of the isotopy class of $\gamma$ that are in efficient position with respect to $(X,y,N)$ and admit no outer triangles of $N$.  
\end{defin}

We require $\gamma$ to intersect $X$ efficiently as otherwise there would be no maximum to the twisting number of components of $\gamma \cap N$.  We include the assumption that $\gamma$ does not admit outer triangles of $N$ as otherwise we could push any ``negative twisting'' of $\gamma$ about $y$ to lie outside of $N$ and therefore $tw(\gamma,N)$ would always be non-negative.

Suppose that $N'$ is another choice of annular neighborhood of $y$ that is fibered with respect to $X$.  Then there exists an isotopy $g_t$ that sends $N$ to $N'$ such that $I$-fibers of $N$ are sent to $I$-fibers of $N'$ and $X$ is fixed.  In particular, this means for any component $c$ of $\gamma \cap N$, $g_1(c)$ is a component of $\gamma \cap N'$.  Moreover, if $\gamma$ is in efficient position with respect to $(X,y,N)$ and does not admit any outer triangles of $N$, then $g_1(\gamma)$ is in efficient position with respect to $(X,y,N')$ and does not admit any outer triangles of $N'$.  Hence we can conclude that $tw(\gamma,N) = tw(\gamma,N')$ and twisting number is independent of the choice of annular neighborhood of $y$.  We will thus denote the twisting number as $tw(\gamma,y)$. 

Given our definition of twisting number and Lemma \ref{lem:LMtwist}, one would expect for there to be a natural relationship between $tw(\gamma,y)$ and $tw(\tau_y(\gamma),y)$.  If an embedding of $\gamma$ admits no outer triangles of $N$, by the following lemma we can then obtain an embedding of $\tau^k_y(\gamma)$ via arc replacements of the components of $\gamma \cap N$ that, for $k$ sufficiently large, continues to intersect $X$ efficiently.  This result and its corollary are integral to proving Lemma \ref{lem:pantswinding}, which provides the foundation for our later results.

\begin{lem} \label{lem:winding}
We continue to define $X$, $Y$, and $N$ as before ($Y$ is allowed to have multiple components).  Suppose $\gamma$ is a simple closed curve in efficient position with respect to $(X,Y,N)$ and admits no outer triangles of $N$.  Denote the components of $\gamma \cap N$ as $c_1,c_2,...,c_r$ and let $m_j$ be the twisting number of $c_j$ in the component of $N$ that contains $c_j$. Let $k \in \mathbb{Z}$ such that either for all $j$, $k + m_j \geq 0$ or for all $j$, $k + m_j \leq 0$.  Then there exists an embedding of $\tau^k_Y(\gamma)$ in efficient position with respect to $(X,Y,N)$ that admits no outer triangles of $N$ and the components of $\tau^k_Y(\gamma) \cap N$ consist of arcs $\{c'_1,c'_2,...,c'_r\}$ where $c'_j$ has twisting number $m_j + k$ for each $j$.
\end{lem}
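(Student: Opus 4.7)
The plan is to exhibit a concrete representative $\gamma'$ of the isotopy class of $\tau^k_Y(\gamma)$ and verify the three required properties. Define $\gamma'$ to coincide with $\gamma$ on $\Sigma \setminus \mathring N$, and inside each annulus $N_i$ to equal $\tau^k_{y_i}(c_j)$ on every component $c_j$ of $\gamma \cap N_i$. Since $\tau^k_{y_i}$ fixes $\partial N_i$ pointwise, the pieces glue into an embedded simple closed curve isotopic to $\tau^k_Y(\gamma)$; by Lemma \ref{lem:LMtwist} the resulting components $c'_j$ of $\gamma' \cap N$ have twisting numbers $m_j + k$, giving the final assertion. For the no-outer-triangles property, note that if $E$ were an outer triangle of $N$ for $\gamma'$, its $\gamma'$-side $s_{\gamma'}$ would lie in $\gamma' \cap (\Sigma \setminus \mathring N)$, where $\gamma' = \gamma$; then $E$ would also be an outer triangle for $\gamma$, contradicting the hypothesis.

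To show $\gamma'$ is in efficient position with respect to $(X,Y,N)$, I would verify the four efficiency conditions in sequence so that later ones may cite earlier ones. Efficient intersection with $\partial N$ comes first: a standard innermost-disk argument (essentiality of $\partial N$ rules out closed-curve intersections in the interior of a candidate bigon, and outermost-arc moves handle arc intersections) reduces such a bigon to lie in a single side of $\partial N$. The outside case is a bigon for $\gamma$, contradicting its efficiency, and in the inside case the universal cover $B = [0,1] \times \mathbb{R}$ of $N_i$ presents the $\gamma'$-side as a straight segment of slope $m_j + k$ while $\partial N_i$ lifts to the vertical lines $x=0$ and $x=1$, sharing at most one point with any line and hence precluding a bigon. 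The efficient intersections of $\gamma'$ with each $I$-fiber of $N$ and with each core $y_i$ follow by the same technique: because these curves lie in $\mathring N$, any outermost arc of $D \cap \partial N$ in a candidate bigon $D$ must have both endpoints on the $\gamma'$-side, producing a sub-bigon with $\partial N$ and contradicting what was just shown.

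The main case, where the sign hypothesis on $m_j + k$ enters, is efficient intersection with $X$. Suppose a bigon $D$ between $\gamma'$ and $X$ exists, chosen to minimize $|D \cap \partial N|$. If $D \cap \partial N = \emptyset$, an outside bigon contradicts efficiency of $\gamma$, while an inside $N_i$ bigon is excluded by the universal cover picture: the $\gamma'$-side is a straight segment whose slope has consistent sign across $j$, so it meets the horizontal $I$-fiber lifts monotonically. Otherwise, take an outermost arc $\sigma$ of $D \cap \partial N$ in $D$ and examine the subdisk $D_1$ it cuts off: the two bigon subcases (both endpoints of $\sigma$ on $\gamma'$, or both on $X$) reduce immediately to contradictions with the efficient intersections already shown (with $X \cap \partial N$ being efficient because $N$ is fibered with respect to $X$ and $X$ meets $Y$ efficiently), and the triangle case with $\mathring D_1$ outside $N$ is an outer triangle for $\gamma'$, hence for $\gamma$, contradicting the hypothesis.

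The principal obstacle is the remaining subcase, in which $D_1$ is a triangle with $\mathring D_1$ inside $N$. I plan to show that a local isotopy of $\gamma'$ supported in a small neighborhood of $D_1$, pushing the $\gamma'$-side across $D_1$ toward $\sigma$ and the $X$-side of $D_1$ while staying on the interior-facing side at the corner $\sigma \cap X$, introduces no new intersections with $X$ or $\partial N$ and simultaneously eliminates the interior $\gamma' \cap X$ vertex of $D_1$ together with the endpoint of $\sigma$ on $\gamma'$. The resulting representative is isotopic to $\gamma'$ but has strictly fewer intersections with $\partial N$, contradicting the efficient intersection with $\partial N$ established first. Verifying the push in detail, via the universal cover of $N_i$, is the technical heart of the argument.
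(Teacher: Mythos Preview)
Your construction of $\gamma'$ and the verification that it is in efficient position with respect to $\partial N$, $Y$, and the $I$-fibers, and admits no outer triangles, are correct and parallel the paper's argument. The outermost-arc analysis of a hypothetical $(\gamma', X)$-bigon $D$ is also sound through the first three subcases.

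The gap is in your final subcase, the triangle $D_1$ with $\mathring D_1 \subset N$. The proposed isotopy cannot reduce $|\gamma' \cap \partial N|$. The $\gamma'$-side of $D_1$ runs from a point $p_1 \in \partial N$ (an endpoint of some $c'_j$) to a point $p_2$ in $\mathring N$ (in fact one of the two vertices of the original bigon $D$). Any isotopy of $\gamma'$ supported near $D_1$ carries an arc from a point of $\Sigma - N$ (just outside $p_1$, where $\gamma' = \gamma$) to a point of $\mathring N$ (just past $p_2$ along $c'_j$); such an arc must cross $\partial N$ an odd number of times, hence at least once. You remove the crossing at $p_1$ only by creating another nearby, and no contradiction with $\partial N$-efficiency follows. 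The same parity observation shows $|\gamma' \cap X|$ is not reduced either.

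This subcase is exactly where the sign hypothesis on the $m_j + k$ does its real work, and the paper uses it differently. Rather than looking at a single outermost arc, consider the two components $\beta_1, \beta_2$ of $b' - \partial N$ that contain the two vertices of $D$, where $b'$ is the $\gamma'$-side of $D$. If either lies in $\Sigma - N$ it bounds an outer triangle, so both lie in $N$. Every arc of $\partial N \cap D$ must have one endpoint on $b'$ and one on the $X$-side of $D$ (otherwise $\partial N$ would form a bigon with $X$ or with $\gamma'$, contradicting efficiencies you have already established), so these arcs are parallel in $D$; it follows that $\beta_1$ and $\beta_2$ turn in opposite directions in $N$, and hence lie in components of $\gamma' \cap N$ whose twisting numbers have opposite sign. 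This contradicts the hypothesis that all $m_j + k$ share a sign. Note also that your invocation of the sign hypothesis in the $D \cap \partial N = \emptyset$ subcase is unnecessary there (a single straight segment meets a horizontal line at most once regardless of slope), so as written your argument never actually uses the hypothesis where it is needed.
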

\begin{proof}
Let $\gamma' = \tau_Y^k(\gamma)$.  By Lemma \ref{lem:LMtwist}, we can obtain an embedding of $\gamma'$ by fixing $\gamma$ off of $N$ and replacing each component $c_j$ of $\gamma \cap N$ with a properly embedded arc $c'_j$ in $N$ that has twisting number $m_j+k$.  We replace these arcs carefully so that this embedding of $\gamma'$ intersects $Y$ and each $I$-fiber of $N$ efficiently.  As $\gamma$ intersects $\partial N$ efficiently, this embedding of $\gamma'$ will as well.  So it remains to show that this embedding of $\gamma'$ intersects $X$ efficiently.

Suppose otherwise.  Then there exists a component $b'$ of $\gamma' - X$ that cobounds a bigon $D$ with a subarc of some $x_{\ell} \in X$.  As $\gamma$ intersects $X$ efficiently, $b'$ cannot be contained in the complement of $N$ since it would then coincide with $\gamma$.  Moreover, as the components of $\gamma' \cap N$ intersects each $I$-fiber of $N$ efficiently, $b'$ cannot be contained in $N$.  So $b'$ and $\partial N$ must intersect at least once and we can consider the two distinct components $\beta_1$ and $\beta_2$ of $b' - \partial N$ which connect $x_{\ell}$ with $\partial N$ and have interiors disjoint from $\partial N$.

Suppose the interior of $\beta_1$ lies off of $N$.  Then $\beta_1$ forms a side of an outer triangle of $N$.  However, as this embedding of $\gamma'$ agrees with $\gamma$ off of $N$, $\gamma'$ also does not admit any outer triangles of $N$.  Hence we have a contradiction and $\beta_1$ must instead be contained in $N$.  Repeating the argument with $\beta_2$ implies $\beta_2$ must also lie in $N$.

So both $\beta_1$ and $\beta_2$ are subarcs of components of $\gamma' \cap N$.  By hypothesis, $k$ was chosen so every component of $\gamma' \cap N$ has twisting number with the same sign.  On the other hand, the arcs of $\partial N \cap D$ must have one endpoint on $x_{\ell}$ and one endpoint on $b'$, as otherwise $\partial N$ would not intersect $x_{\ell}$ or $b'$ efficiently.  This means that $\beta_1$ and $\beta_2$ have twisting numbers of opposite sign in $N$ (see Figure \ref{fig:nobigon}), a contradiction.

Hence $\gamma'$ must have efficient intersection with $X$.
\end{proof}

\begin{figure}[ht]
\labellist
\footnotesize \hair 2pt
\pinlabel \textcolor{green}{$\beta_1$} at 0 10
\pinlabel \textcolor{green}{$\beta_2$} at 73 10
\pinlabel \textcolor{red}{$x_{\ell}$} at 37 0
\pinlabel $N$ at 10 45
\pinlabel $N$ at 62 45
\pinlabel $D$ at 37 20
\pinlabel \textcolor{green}{$b'$} at 37 49
\endlabellist
\centering \scalebox{1.5}{\includegraphics{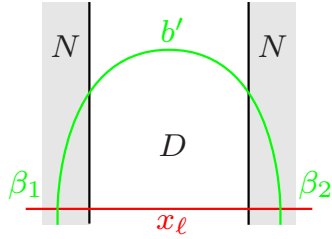}}
\caption{In this case, $\beta_1$ has positive twisting number in $N$ and $\beta_2$ has negative twisting number in $N$.}\label{fig:nobigon}
\end{figure}

In particular, Lemma \ref{lem:winding} provides a lower bound for the twisting number of $\tau^k_y(\gamma)$ about $y$ with respect to $X$ for appropriately chosen $k$.

\begin{cor} \label{cor:winding}
Let $X$, $Y$, $N$, and $\gamma$ be defined as in Lemma \ref{lem:winding}.  Let $m_1,m_2,...,m_r$ be the twisting numbers of the components of $\gamma \cap N$ and choose $k \in \mathbb{Z}$ so that for all $j$, $k + m_j \geq 0$.  If a component of $\gamma \cap N_{\ell}$ has twisting number equal to $m$, for some $N_{\ell} \in N$, then $tw(\tau^k_Y(\gamma),y_{\ell}) \geq m + k$.
\end{cor}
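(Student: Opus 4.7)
The plan is to deduce the corollary directly from Lemma \ref{lem:winding} together with the definition of $tw(\gamma, y_\ell)$. The main content of the corollary is that the lower bound "$m+k$" survives when we restrict attention to the single annular neighborhood $N_\ell$ of $y_\ell$, even though Lemma \ref{lem:winding} is stated with respect to the full annular collection $N$.

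First, I would apply Lemma \ref{lem:winding} to produce an embedding of $\tau^k_Y(\gamma)$ that is in efficient position with respect to $(X,Y,N)$, admits no outer triangles of $N$, and whose intersection with $N$ consists of arcs $c_1',\ldots,c_r'$ with twisting numbers $m_j + k$. Since $c_j \subset N_\ell$ by hypothesis, the arc replacement is local, so the corresponding arc $c_j'$ is again in $N_\ell$ and has twisting number $m+k$ in $N_\ell$ with respect to $X$.

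Next, I would verify that this embedding of $\tau^k_Y(\gamma)$ satisfies the hypotheses required by the definition of $tw(\tau^k_Y(\gamma), y_\ell)$, namely that it is in efficient position with respect to $(X, y_\ell, N_\ell)$ and admits no outer triangles of $N_\ell$. Efficient position is immediate, since intersecting $Y$ and $\partial N$ and each $I$-fiber of $N$ efficiently implies the same for $y_\ell \subset Y$, $\partial N_\ell \subset \partial N$, and each $I$-fiber of $N_\ell$. For the outer triangle condition, the key observation is that an outer triangle of $N_\ell$ has a side in $\partial N_\ell \subset \partial N$, and its interior lies off of $N_\ell$ and (since its side lies on $X$, which is disjoint from the interior of $N$) also off of $N$; so any outer triangle of $N_\ell$ would automatically be an outer triangle of $N$, contradicting what Lemma \ref{lem:winding} gives us.

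Once both properties are confirmed, the arc $c_j'$ witnesses a component of $\tau^k_Y(\gamma) \cap N_\ell$ with twisting number $m+k$, and the definition of $tw(\tau^k_Y(\gamma), y_\ell)$ as the maximum over all such admissible representatives yields $tw(\tau^k_Y(\gamma), y_\ell) \geq m+k$. There is no real obstacle here: Lemma \ref{lem:winding} carries all the technical weight, and the only point demanding a little care is the passage from "no outer triangles of $N$" to "no outer triangles of $N_\ell$", which is a one-line geometric observation about where the triangle sits relative to $\partial N$.
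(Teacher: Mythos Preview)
Your overall strategy is exactly the paper's: invoke Lemma~\ref{lem:winding} to produce an embedding of $\tau^k_Y(\gamma)$ with an arc $c'\subset N_\ell$ of twisting number $m+k$, then read off the lower bound for $tw(\tau^k_Y(\gamma),y_\ell)$. The paper's own proof is two sentences and does not explicitly check the ``no outer triangles of $N_\ell$'' clause in the definition of $tw$; you attempt that extra verification, which is a reasonable instinct, but the argument you give for it is wrong.

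You claim that any outer triangle of $N_\ell$ is automatically an outer triangle of the full collection $N$, on the grounds that ``$X$\ldots is disjoint from the interior of $N$.'' That premise is false: by hypothesis each $N_i$ is fibered with respect to $X$, meaning $X\cap N_i$ consists of $I$-fibers, so $X$ certainly meets $\mathring N$. And the desired conclusion does not follow either: nothing prevents $\mathring E$ from meeting some $N_i$ with $i\neq\ell$, since both $s_X$ and $s_\gamma$ may cross $\partial N_i$ (at an $I$-fiber endpoint and at a point of $\gamma\cap\partial N_i$, respectively), so arcs of $\partial N_i$ can run through $\mathring E$. The honest repair is to note that even if the embedding from Lemma~\ref{lem:winding} admits outer triangles of $N_\ell$, pushing across them as in the discussion preceding the Proposition preserves efficient position with respect to $(X,y_\ell,N_\ell)$ and only increases the absolute twisting of the affected arc; since every arc of $\tau^k_Y(\gamma)\cap N_\ell$ starts with twisting $m_j+k\geq 0$, one arrives at a representative with no outer triangles of $N_\ell$ still exhibiting an arc of twisting at least $m+k$.
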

\begin{proof}
Suppose $c$ is the component of $\gamma \cap N_{\ell}$ with twisting number equal to $m$ in $N_{\ell}$.  Then by Lemma \ref{lem:winding}, we obtain an embedding of $\tau^k_Y(\gamma)$ that is in efficient position with respect to $(X,Y,N)$ by replacing $c$ with an arc $c'$ in $N_{\ell}$ that has twisting number equal to $m+k$.  Hence $m+k$ is a lower bound for $tw(\tau^k_Y(\gamma),y_{\ell})$.
\end{proof}

We now replace the arbitrary collection of pairwise disjoint curves $X$ in $\Sigma$ with a pants decomposition $\mcP$ of $\Sigma$.  The twisting number of $\gamma$ around a 1-seamed curve $y$ then corresponds to the number of seams $\gamma$ contains and yields the following natural relationship.

\begin{rmk}
Let $\mcP$ be a pants decomposition of $\Sigma$ and $y$ a simple closed curve that is 1-seamed with respect to $\mcP$.  If $\gamma$ is a simple closed curve such that $|tw(\gamma,y)| > k$, then $\gamma$ is $k$-seamed with respect to $\mcP$.  
\end{rmk}

When applied in this setting, Lemma \ref{lem:winding} gives the following result.  Note that for part (c), we show that adding the assumption that $Y$ is collectively 2-seamed with respect to $\mcP$ can strengthen the results shown in parts (a) and (b). Since each component of $Y$ is assumed to be 1-seamed, $Y$ is always collectively 2-seamed if $Y$ has at least 2 components.

\begin{lem} \label{lem:pantswinding}

Let $H$ be a handlebody with $\partial H = \Sigma$ and $\mcP$ a pants decomposition for $H$.  Let $Y = \{y_1,y_2,...,y_t\}$ be a collection of pairwise disjoint curves in $\Sigma$ such that $\mcP$ and $Y$ intersect efficiently and $y_i$ is 1-seamed with respect to $\mcP$ for each $i$.  Suppose $\gamma$ is a simple closed curve that intersects $\mcP$ and $Y$ efficiently.

(a) If $\gamma$ is not 1-seamed with respect to $\mcP$ and $\gamma \cap y_{\ell} \neq \emptyset$ for some $y_{\ell}$, then for $k \geq 1$,
$$tw(\tau_Y^k(\gamma),y_{\ell}) \geq k-1.$$

(b) If $\gamma$ bounds a disk in $H$, then for every $y_i \in Y$ and $k \geq 1$,
$$tw(\tau_Y^k(\gamma),y_i) \geq k.$$

(c) If $Y$ is collectively 2-seamed and $k \geq 2$ then the inequalities proved in parts (a) and (b) become strict inequalities.
\end{lem}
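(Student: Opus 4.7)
The plan is to isotope $\gamma$ into efficient position with respect to $(\mcP, Y, N)$ admitting no outer triangles of $N$, where $N = \{N_1, \dots, N_t\}$ is a collection of pairwise disjoint fibered annular neighborhoods of the $y_i$, and then invoke Corollary \ref{cor:winding}. Such an $N$ exists because each $y_i$ intersects $\mcP$ efficiently and the $y_i$ are pairwise disjoint; the required isotopy of $\gamma$ is produced using Lemma \ref{lem:efficientisotopy} followed by the outer-triangle elimination described earlier. The substantive work is then to produce lower bounds on the twisting numbers $m_j$ of the components of $\gamma \cap N$ under each hypothesis.

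For part (a) the central combinatorial claim is: if any component $c \subset \gamma \cap N_i$ has $|m_c| \geq 2$, then $\gamma$ must already be 1-seamed with respect to $\mcP$, contradicting the hypothesis. In the universal cover $B = [0,1] \times \mathbb{R}$ of $N_i$ introduced before Lemma \ref{lem:LMtwist}, the lifts of $\mcP \cap N_i$ lie at heights $z/n$ (with $n = |N_i \cap \mcP|$), and $c$ lifts to a segment of slope $m_c$ meeting $n|m_c|$ of those horizontal lifts; this breaks $c$ into $n|m_c|-1$ interior sub-arcs, each lying in a single strip of $N_i \cap P$ for some pair of pants $P$, running from one $\mcP$-side of the strip to the other, and therefore appearing in $\gamma \cap P$ as a seam parallel to the core of that strip. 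A residue-mod-$n$ count shows that for $|m_c| \geq 2$ every one of the $n$ strips of $N_i$ receives at least one interior sub-arc, and since $y_i$ is 1-seamed these strips realise all three seam classes in every pair of pants, forcing $\gamma$ to contain seams of all three classes in every $P$ and hence to be 1-seamed. Contrapositively, $m_j \in \{-1, 0, 1\}$ for every component of $\gamma \cap N$, and because $\gamma \cap y_\ell \neq \emptyset$ some component of $\gamma \cap N_\ell$ exists with $m \geq -1$; Corollary \ref{cor:winding} then delivers $tw(\tau_Y^k(\gamma), y_\ell) \geq k - 1$.

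For part (b), Lemma \ref{lem:compdiskwave} splits $\gamma$ into two cases. If $\gamma$ is isotopic to a component of $\mcP$, then $\gamma$ may be taken disjoint from $\mcP$, so every $m_j = 0$ and Corollary \ref{cor:winding} yields $tw \geq k$. Otherwise $\gamma$ contains two waves, and by the remark that a 1-seamed curve contains no waves it is not 1-seamed, so part (a) forces $m_j \in \{-1, 0, 1\}$; it remains to rule out $m_c = -1$. The plan is to observe that $m_c = -1$ forces the two endpoints of $c$ onto opposite $\partial N_i$-sides of a single strip $S$ in some $N_i \cap P$, and then to show that the extensions of $\gamma$ just outside $N_i$ at those endpoints, combined with a compressing disk for $\gamma$ and with $c$ itself, must produce either an outer triangle of $N$ (contrary to hypothesis) or an excess intersection between $\gamma$ and $\mcP$ (contrary to efficient position). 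With $m_j \geq 0$ in hand, Corollary \ref{cor:winding} completes the case. Part (c) follows from the same framework after using the additional seam-class representatives in each pair of pants provided by the collectively 2-seamed hypothesis on $Y$, which pushes the critical threshold in the strip count downward by one and thus shifts both conclusions upward by one; the extra hypothesis $k \geq 2$ is exactly what is needed to apply Corollary \ref{cor:winding} in the presence of any remaining $m_j = -1$ from part (a)'s worst case. The main obstacle throughout is the wave/outer-triangle analysis in part (b) (and its strengthening for (c)), since this is the only step where $\gamma$ being a compressing-disk boundary---rather than merely non-1-seamed---is genuinely used.
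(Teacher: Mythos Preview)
Your setup and part (a) match the paper's approach.  The genuine gap is in part (b): you aim to rule out $m_c = -1$ entirely, but this is neither necessary nor what the paper does, and your sketch for achieving it (endpoints on ``opposite $\partial N_i$-sides of a single strip'', then forcing an outer triangle or bigon) is not a valid argument---an arc of twisting number exactly $-1$ crosses all $n$ of the $\mcP$-fibers of $N_i$ once, and there is no reason its two endpoints must land in a common strip of $N_i - \mcP$.

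The paper's route is much simpler and avoids the issue.  Corollary~\ref{cor:winding} only requires \emph{one} component of $\gamma \cap N_i$ with a known twisting number $m$, together with $k + m_j \geq 0$ for all $j$; it then yields $tw(\tau_Y^k(\gamma), y_i) \geq m + k$.  The wave $w \subset \gamma$ supplies exactly this: since each $y_i$ is $1$-seamed it must cross $w$, so $w \cap N_i$ is nonempty and consists of arcs of twisting number $0$.  Lemma~\ref{lem:disjointcircling} then forces every other $m_j$ to lie within $1$ of $0$, so $m_j \geq -1$, hence $k + m_j \geq 0$ for $k \geq 1$, and the conclusion $tw \geq 0 + k = k$ follows immediately.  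You never need all $m_j \geq 0$.

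Your sketch of part (c) is also off-target.  The collectively $2$-seamed hypothesis is not used to sharpen a strip count; rather, it guarantees that an endpoint of any integer-twisting arc $c' \subset \tau_Y^k(\gamma) \cap N$ abuts a \emph{rectangular} component $R$ of $\Sigma - (N \cup \mcP)$.  The paper then analyses how $\tau_Y^k(\gamma)$ traverses $R$ (it cannot turn right, since all components of $\tau_Y^k(\gamma) \cap N$ have strictly positive twisting once $k \geq 2$) and produces an explicit isotopy pushing one more $\mcP$-intersection into $N$, strictly increasing the twisting number of $c'$.  This isotopy through $R$ is the entire content of (c); nothing in your outline corresponds to it.
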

\begin{proof}
Let $N = \{N_1,N_2,...,N_t\}$ be a collection of pairwise disjoint annuli such that $N_i$ contains $y_i$ and $N_i$ is fibered with respect to $\mcP$.  Isotope $\gamma$ to be in efficient position with respect to $(\mcP,Y,N)$ and so it admits no outer triangles of $N$.  Let $c_1,c_2,...,c_r$ be the components of $\gamma \cap N$ with twisting numbers $m_1,m_2,...,m_r$ respectively.

\vspace{10pt}

\noindent \textit{Proof of part (a).}
As mentioned in the earlier remark, if $|m_j| > 1$ for any $j$ then $\gamma$ would be 1-seamed with respect to $\mcP$, since every component of $Y$ is 1-seamed.  Since this would contradict our hypothesis, $|m_j| \leq 1$.

Then for $k \geq 1$, Corollary \ref{cor:winding} implies $tw(\tau^k_Y(\gamma),y_{\ell})$ is bounded below by $k$ plus the twisting number of any component of $\gamma \cap N_{\ell}$.  As these twisting numbers are all at least -1, we get $tw(\tau^k_Y(\gamma),y_{\ell}) \geq k - 1$.

\vspace{10pt}

\noindent \textit{Proof of part (b).}
By Lemma \ref{lem:compdiskwave}, $\gamma$ is either parallel to a component of $\mcP$ or contains a wave of a component of $\Sigma - \mcP$.

If $\gamma$ is parallel to a component of $\mcP$, then for every $y_i \in Y$, $y_i$ intersects $\gamma$ and $\gamma \cap N_i$ consists of arcs with twisting number 0.  Therefore, for $k \geq 1$, Corollary \ref{cor:winding} implies $tw(\tau_Y^k(\gamma),y_i) \geq k$.

So suppose $\gamma$ instead contains a wave $w$ of a component of $\Sigma - \mcP$.  Then since every 1-seamed curve must intersect $w$, each $y_i$ intersects $w$ and moreover $w \cap N_i$ consists of properly embedded arcs in $N_i$ with twisting number $0$.  This implies $m_1,m_2,...m_r$ are all greater than or equal to $-1$ by Lemma \ref{lem:disjointcircling}.  So, for $k \geq 1$, Corollary \ref{cor:winding} implies $tw(\tau_Y^k(\gamma),y_i) \geq k$.

\vspace{10pt}

\noindent \textit{Proof of part (c).}
What we prove here is actually a slightly more general claim.

\vspace{10pt}

\textit{Claim:} Define $\mcP$, $Y$, and $N$ as before and also assume $Y$ is collectively 2-seamed. Let $\gamma'$ be a simple closed curve that is in efficient position with respect to $(\mcP,Y,N)$ such that every component of $\gamma' \cap N$ has strictly positive twisting number.  Then if a component $c'$ of $\gamma' \cap N$ has twisting number equal to a positive integer, then there exists an isotopy of $\gamma'$ that keeps $\gamma'$ in efficient position with respect to $(\mcP,Y,N)$ and increases the twisting number of $c'$.

\vspace{10pt}

\textit{Proof of claim.} 
Since $Y$ is collectively 2-seamed, one of the endpoints of $c'$ lies on the boundary of some rectangular component $R$ of $\Sigma - (N \cup \mcP)$.  Denote this endpoint as $r$ and assume that we have perturbed $\gamma'$ so that $r$ lies off of $\mcP$. 

Consider the subarc $\beta$ of $\gamma' \cap R$ that has $r$ as one endpoint.  Then the second endpoint of $\beta$ lies on one of the four sides of $R$.  However, $\beta$ cannot have both endpoints on the same side of $R$, as this would create a bigon between $\gamma'$ and $\partial N$.  Moreover, if we follow along $\beta$ starting at $r$, we see that $\beta$ cannot turn right in $R$ and have its second endpoint on $\mcP$, as then $\gamma'$ would form a bigon with $\mcP$.  Thus, $\beta$ must either turn to the left with its second endpoint on $\mcP$, or have its second endpoint on the opposite side of $R$ on $\partial N$.  In the latter case, as all components of $\gamma' \cap N$ have strictly positive twisting number, $\gamma'$ must subsequently turn left in $N$.  Note that since $c'$ has twisting number equal to an integer, the second endpoint of $\beta$ cannot also be an endpoint of $c'$.  

Therefore, as shown in Figure \ref{fig:2seamedrect}, in either case there exists an isotopy of $\gamma'$ that preserves the efficient position of $\gamma'$ with respect to $(\mcP,Y,N)$ and increases the twisting number of $c'$.  This concludes the proof of the claim.

\vspace{10pt}

From part (a), we obtain the inequality that $tw(\tau^k_Y(\gamma),y_{\ell}) \geq k-1$ for some $y_{\ell} \in Y$.  Assuming $k \geq 2$ and $Y$ is collectively 2-seamed, the above claim shows that any embedding of $\tau^k_Y(\gamma)$ such that a component of $\tau^k_Y(\gamma) \cap N_{\ell}$ has twisting number equal to $k-1$ can be isotoped so that this twisting number is increased.  Hence $tw(\tau^k_Y(\gamma),y_{\ell}) > k-1$.  

An identical argument shows that, with the added assumptions of part (c), we also achieve strict inequality in part (b).
\end{proof}

\begin{figure}[ht]
\labellist
\footnotesize \hair 2pt
\pinlabel \textcolor{green}{$\beta$} at 36 37
\pinlabel $r$ at 48 36
\pinlabel \textcolor{blue}{$c'$} at 68 17
\pinlabel \textcolor{blue}{$c'$} at 152 17
\pinlabel $N$ at -5 8
\pinlabel $N$ at 79 8
\pinlabel \textcolor{red}{$\mathcal{P}$} at 14 69
\pinlabel \textcolor{red}{$\mathcal{P}$} at 50 69
\pinlabel \textcolor{red}{$\mathcal{P}$} at 98 69
\pinlabel \textcolor{red}{$\mathcal{P}$} at 134 69
\endlabellist
\centering \scalebox{1.5}{\includegraphics{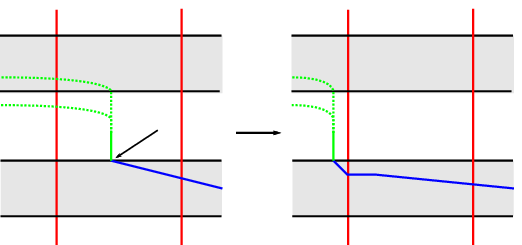}}
\caption{$\beta$ is isotoped into $N$, increasing the twisting number of $c'$.}\label{fig:2seamedrect}
\end{figure}

\section{Heegaard Splittings and Hempel Distance} \label{sec:heegaard}

\begin{defin}
A collection of pairwise disjoint essential simple closed curves $Z$ in $\Sigma$ is a \emph{full set of curves} if $\Sigma - Z$ is a collection of punctured spheres.
\end{defin}
In particular, both a standard cut system and a pants decomposition of $\Sigma$ are full.

Suppose $Z$ is a full set of curves in $\Sigma$.  Consider the 3-manifold obtained from $\Sigma \times I$ by attaching 2-handles to $\Sigma \times \{1\}$, one along each curve in $Z$.  Since $\Sigma - Z$ consists of a collection of punctured spheres, the boundary of the 3-manifold consists of $\Sigma \times \{0\}$ together with a collection of spheres. Attaching a 3-ball to each of the spheres creates a handlebody whose boundary is $\Sigma$.  Denote the handlebody by $V_Z$, with $\partial V_Z = \Sigma$.

\begin{defin}
A genus $g$ \emph{Heegaard splitting} of a closed 3-manifold $M$ is a decomposition of $M$ into the union of two genus $g$ handlebodies, identified along their boundaries.
\end{defin}

\begin{rmk}
Let $Z$ and $Z'$ be two full sets of curves in $\Sigma$.  Then $V_{Z}$ and $V_{Z'}$ are handlebodies and $M = V_{Z} \cup_{\Sigma} V_{Z'}$ is a Heegaard splitting of the closed orientable manifold $M$.
\end{rmk}

Given two full sets of curves $Z$ and $Z'$ in $\Sigma$ such that $\Sigma - Z$ and $\Sigma - Z'$ are homeomorphic (for example, if both $Z$ and $Z'$ are standard cut systems), there exists a surface automorphism of $\Sigma$ that sends $Z$ to $Z'$ (for more details see \cite{FM}).  This is the map that identifies the boundary of $V_Z$ to $V_{Z'}$.  Conversely, if we start with just $Z$ and $h$ is a surface automorphism of $\Sigma$, then we can obtain a 3-manifold with Heegaard splitting $V_Z \cup_{\Sigma} V_{h(Z)}$.  In this way, every surface automorphism of $\Sigma$ induces a Heegaard splitting of some closed 3-manifold.

\begin{defin}
The \emph{curve complex} for $\Sigma$, denoted as $C(\Sigma)$, is the complex whose vertices are isotopy classes of essential simple closed curves in $\Sigma$ and a set of distinct vertices $\{v_0,v_1,...,v_k\}$ determines a $k$-simplex if they are pairwise disjoint.  
\end{defin}
Note that in this paper we only need to consider the 1-skeleton of the curve complex.

\begin{defin}
Given two collections of simple closed curves $\mathcal{Z}$ and $\mathcal{Z}'$ in $\Sigma$, the \emph{distance between $\mathcal{Z}$ and $\mathcal{Z}'$}, denoted $d(\mathcal{Z},\mathcal{Z}')$, is the minimal number of edges in a path in $C(\Sigma)$ between a vertex in $\mathcal{Z}$ and a vertex in $\mathcal{Z}'$.
\end{defin}

\begin{defin}
Let $H$ be a handlebody with $\partial H = \Sigma$.  Then the \emph{disk complex of $H$}, denoted as $D(H)$, is the subcomplex of $C(\Sigma)$ determined by all essential simple closed curves that bound embedded disks in $H$.  If $Z$ is a full set of curves in $\Sigma$, we will let $K_Z$ denote $D(V_Z)$.
\end{defin}

The following definition was first introduced by Hempel \cite{He}.

\begin{defin}
Let $Z$ and $Z'$ be full sets of curves in $\Sigma$. Then the \emph{Hempel distance}, or \emph{distance}, of the Heegaard splitting $V_Z \cup V_{Z'}$ is equal to $d(K_Z,K_{Z'})$.
\end{defin}

We now introduce the useful term ``diskbusting.''

\begin{defin}
Let $H$ be a handlebody with $\partial H = \Sigma$.  A collection of pairwise disjoint essential curves $Y = \{y_1,y_2,...,y_t\}$ is \emph{diskbusting} on $H$ if every meridian of $H$ intersects some $y_i \in Y$.
\end{defin}

Observe that a simple closed curve $y$ is diskbusting on $H$ if and only if $d(D(H),y) \geq 2$.  On the other hand, there exist collections of curves $Y$ such that $Y$ is diskbusting on $H$ and $d(D(H),y_i) = 1$ for all $y_i \in Y$. For an example, see the collection $Y_2$ in Figure \ref{fig:diskbusting}.

\begin{defin}
Let $X$ be a standard cut system and $Y = \{y_1,y_2,...,y_t\}$ a collection of essential curves in $\Sigma$ that intersect $X$ efficiently.  Then $\Sigma - X$ is a $2g$-punctured sphere and $Y$ is a collection of properly embedded essential arcs in $\Sigma - X$.  Obtain a planar graph $\Gamma_X(Y)$ by having each puncture of $\Sigma - X$ correspond to a vertex and each arc of $Y$ in $\Sigma - X$ represent an edge.
\end{defin}

The graph $\Gamma_X(Y)$ is known as the \emph{Whitehead graph} (see \cite{Stallings}).

\begin{defin}
A graph $\Gamma$ is 2-connected if it is connected and $\Gamma$ does not contain a vertex whose removal would disconnect the graph (i.e. $\Gamma$ does not contain a cut vertex).
\end{defin}

The following theorems characterize diskbusting sets of curves on the surface of a handlebody and were first proven by Starr \cite{Starr}.  Alternate proofs have been given by Wu \cite{Wu} (Theorem 1.2), Strong \cite{Strong} (Theorem 3), and Luo \cite{Luo} (Theorem 3.1).

\begin{thm} \label{thm:Starr1}
\textnormal{(Starr \cite{Starr}, Theorem 1).} Let $H$ be a handlebody with outer boundary $\Sigma$.  Suppose $Y = \{y_1,y_2,...,y_t\}$ is a collection of essential simple closed curves in $\Sigma$.  Then $Y$ is diskbusting on $H$ if and only if there exists a standard set of meridians $X$ of $H$ such that $\Gamma_X(Y)$ is 2-connected.
\end{thm}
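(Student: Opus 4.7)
The plan is to prove the two implications separately. The backward direction uses an innermost-circle / outermost-arc argument applied to a hypothetical compressing disk disjoint from $Y$, while the forward direction starts with a standard set of meridians minimizing $|X \cap Y|$ and analyzes the combinatorics of $\Gamma_X(Y)$ via Whitehead-type moves.

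For the backward direction, assume $\Gamma_X(Y)$ is 2-connected for some standard set of meridians $X$ bounding disks $\mcD$, and suppose for contradiction that there is a meridian disk $D \subset H$ with $\partial D$ disjoint from $Y$. I would isotope $\partial D$ to meet $X$ efficiently and $D$ to meet $\mcD$ minimally; a standard innermost-circle argument then forces $D \cap \mcD$ to be a collection of arcs. If $D \cap \mcD \neq \emptyset$, an outermost such arc cuts off a subdisk in $D$ whose other boundary arc $\beta \subset \partial D$ is an essential properly embedded arc in the $2g$-punctured sphere $\Sigma - X$, with both endpoints on a single puncture $v$ (one of the two copies of some $x_i$). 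Since $\beta$ is essential, it separates $\Sigma - X$ into two regions each containing at least one other puncture; since $\beta$ is disjoint from $Y$, every arc of $Y \cap (\Sigma - X)$ lies in one region, forcing $v$ to be a cut vertex of $\Gamma_X(Y)$ and contradicting 2-connectedness. If instead $D \cap \mcD = \emptyset$, then $\partial D \subset \Sigma - X$ is essential in $\Sigma$ and disjoint from $Y$; it is either parallel to some $x_i$ (making $v_i^\pm$ isolated in $\Gamma_X(Y)$) or nontrivially separates the punctures of $\Sigma - X$ into two nonempty classes (leaving no edges between them). Either case contradicts connectedness.

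For the forward direction, suppose $Y$ is diskbusting and choose $X$ minimizing $|X \cap Y|$ among all standard sets of meridians. If $\Gamma_X(Y)$ is disconnected with vertex partition $V_1 \sqcup V_2$, take $c$ to be the outer boundary of a small regular neighborhood in $\Sigma - X$ of $V_1$ together with all $Y$-arcs having both endpoints in $V_1$. Then $c$ is a simple closed curve in $\Sigma - (X \cup Y)$ with $V_1$-punctures on one side and $V_2$-punctures on the other, so $c$ is essential in $\Sigma$; and since any simple closed curve in $\Sigma - X$ bounds an embedded disk in the cut-open ball and hence in $H$, $c$ is a meridian of $H$ disjoint from $Y$, contradicting diskbusting. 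If $\Gamma_X(Y)$ is connected but has a cut vertex $v = v_i^\pm$, a Whitehead move replaces $x_i$ by a band-sum $x_i'$ along an arc in $\Sigma - (X \cup Y)$ based at the puncture $v$ that separates the edge-endpoints going to one component of $\Gamma_X(Y) - \{v\}$ from the rest; a direct intersection count then shows the resulting cut system satisfies $|X' \cap Y| < |X \cap Y|$, contradicting minimality.

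The main obstacle is the Whitehead-move step in the cut-vertex case: one must identify the correct separating arc in $\Sigma - (X \cup Y)$ (dictated by the cyclic order of edge-endpoints around $v$), verify that the band-sum of $D_i$ along this arc still yields a standard set of compressing disks cutting $\Sigma$ into a $2g$-punctured sphere, and check that the modification strictly decreases $|X \cap Y|$. These are the standard combinatorial technicalities underlying Whitehead's classical algorithm, parallel to the approaches in \cite{Wu, Strong, Luo}; termination of the minimization is then automatic from $|X \cap Y| \in \mathbb{Z}_{\geq 0}$.
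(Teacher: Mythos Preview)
The paper does not give a proof of Theorem~\ref{thm:Starr1}; it is quoted from Starr~\cite{Starr} with pointers to the alternate proofs in~\cite{Wu,Strong,Luo}, so there is no in-paper argument to compare your proposal against.

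That said, your sketch is the classical Whitehead--Stallings approach used in those sources and is essentially correct. Two refinements are worth making. In the disconnected case of the forward direction, the frontier of the regular neighborhood you describe need not be a single curve; you should instead take an essential component of that frontier (one exists since both $V_1$ and $V_2$ are nonempty), which is then a meridian of $H$ disjoint from $Y$. In the cut-vertex case, the key fact---that the edge-ends at $v$ belonging to a fixed component of $\Gamma_X(Y)\setminus\{v\}$ occur consecutively in the cyclic order around $v$---follows from planarity of the embedded graph in the punctured sphere; once you have that, the separating arc $\alpha$ exists, and the replacement curve is whichever of the two boundary curves of a neighborhood of $x_i \cup \alpha$ separates the two copies of $x_i$ in $\Sigma - (X\setminus x_i)$ (this is exactly what ensures the new system is again standard). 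Calling this a ``band-sum'' is slightly loose terminology, but the construction and the intersection count are the standard ones.
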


\begin{thm} \label{thm:Starr2}
\textnormal{(Starr \cite{Starr}, Theorem 2).} Let $H$ be a handlebody with outer boundary $\Sigma$.  Suppose $Y = \{y_1,y_2,...,y_t\}$ is a collection of essential simple closed curves in $\Sigma$.  Then $Y$ is diskbusting on $H$ if and only if there exists a pants decomposition $\mcP$ of $H$ such that $Y$ is collectively 1-seamed with respect to $\mcP$.
\end{thm}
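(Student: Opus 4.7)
The plan is to prove the two directions separately: the reverse direction $(\Leftarrow)$ follows quickly from Lemma \ref{lem:compdiskwave}, while the forward direction $(\Rightarrow)$ reduces via Theorem \ref{thm:Starr1} to extending a standard set of meridians to a pants decomposition with the required seam structure.

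For $(\Leftarrow)$, suppose $\mcP$ is a pants decomposition of $H$ with respect to which $Y$ is collectively $1$-seamed, and let $D \subset H$ be any compressing disk with $\partial D$ isotoped to meet $\mcP$ efficiently. By Lemma \ref{lem:compdiskwave}, either $\partial D$ is isotopic to some $p \in \mcP$, or $\partial D$ contains a wave of some pair of pants component $P$ of $\Sigma - \mcP$. In the first case, $p$ is a boundary component of some pair of pants $P'$ in which $Y$ realizes all three seam types; since every boundary component of $P'$ serves as an endpoint for two of the three seam types, $Y$ must cross $p$. In the second case, if the wave $w \subset \partial D$ has both endpoints on some boundary $b_1 \subset \partial P$, then $w$ separates the other two boundary components $b_2, b_3$ of $P$ into distinct regions, so the $b_2$-to-$b_3$ seam present in $Y$ must cross $w$. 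Either way $Y$ meets $\partial D$, so $Y$ is diskbusting.

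For $(\Rightarrow)$, Theorem \ref{thm:Starr1} provides a standard set of meridians $X$ of $H$ such that the Whitehead graph $\Gamma_X(Y)$ is $2$-connected. The task is to add $2g - 3$ disjoint essential simple closed curves in the $2g$-punctured sphere $\Sigma - X$, each bounding a disk in $V_X = H$, so that together with $X$ they form a pants decomposition with respect to which $Y$ is collectively $1$-seamed. My plan is induction on $g$: at each step I use the $2$-connectedness and the planar embedding of $\Gamma_X(Y)$ in $\Sigma - X$ to identify an edge $e$ of the Whitehead graph with endpoints $v_1, v_2$ that can be encircled by a simple closed curve $c$ cutting off a pair of pants, chosen so that the encircled region also meets at least two further edges of $Y$ exiting to distinct other punctures. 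The pair of pants cut off by $c$ then carries the $v_1$-$v_2$ seam provided by $e$ together with a seam from each of $v_1, v_2$ to the new boundary $c$, realizing all three isotopy classes. The curve $c$ is essential and separates two punctures of $\Sigma - X$ from the remainder, so it bounds a disk in $V_X = H$.

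The main obstacle is the inductive step: verifying that after this splitting, the Whitehead graph on the complementary surface (with $v_1, v_2$ effectively replaced by a single new puncture corresponding to $c$) remains $2$-connected so that the induction can continue. This is the combinatorial heart of the argument; I expect it to follow from the ear-decomposition characterization of $2$-connected graphs, transported through the planar embedding, by showing that the contraction operation corresponding to the encircling of $v_1 \cup v_2 \cup e$ preserves $2$-connectedness whenever $e$ lies on an ear with at least two further endpoints outside $\{v_1, v_2\}$.
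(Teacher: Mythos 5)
First, note that the paper does not prove Theorem \ref{thm:Starr2} at all: it is imported from Starr, with the only commentary being that Starr's argument extends a standard set of meridians $X$ with $\Gamma_X(Y)$ 2-connected to a pants decomposition $\mcP\supset X$. So your overall strategy for $(\Rightarrow)$ is the right one, and your $(\Leftarrow)$ direction is essentially correct: Lemma \ref{lem:compdiskwave} reduces to the two cases you treat, a curve parallel to a component of $\mcP$ must meet a seam ending on that component, and an essential wave on $b_1$ separates $b_2$ from $b_3$ in the pair of pants and hence meets the $b_2$-$b_3$ seam (you should add a sentence, via Lemma \ref{lem:efficientisotopy}, explaining why these intersections cannot be isotoped away, but that is routine).

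The forward direction, however, has a genuine gap exactly where you flag it, and your proposed edge-selection criterion is in fact insufficient. Take $g=2$, punctures $v_1,v_2,v_3,v_4$, and a Whitehead graph with edge set $\{v_1v_2,\,v_1v_3,\,v_1v_4,\,v_2v_3,\,v_2v_4\}$ (i.e.\ $K_4$ minus $v_3v_4$), which is 2-connected. The edge $e=v_1v_2$ satisfies your criterion: the encircled neighborhood of $v_1\cup e\cup v_2$ emits further edges to the two distinct punctures $v_3$ and $v_4$. Yet the complementary pair of pants, with boundary $c,v_3,v_4$, carries only $c$-$v_3$ and $c$-$v_4$ seams and no $v_3$-$v_4$ seam, so the resulting pants decomposition is not collectively 1-seamed; equivalently, the contracted graph has $\bar{v_1v_2}$ as a cut vertex separating $v_3$ from $v_4$. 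The real condition is that $\{v_1,v_2\}$ must not be a separating pair of the Whitehead graph, and the heart of Starr's theorem is (i) proving that a 2-connected multigraph always contains such an edge compatible with the planar embedding in the punctured sphere, and (ii) verifying that the contraction (with loops at $c$ discarded, since $v_1$-$v_2$ edges outside the neighborhood become waves at $c$) is again 2-connected so the induction closes. Neither is established by your appeal to ear decompositions, and your two formulations of the selection rule ("two further edges to distinct punctures" versus "$e$ lies on an ear with further endpoints outside $\{v_1,v_2\}$") are not equivalent — the example above passes the first and fails the second. Until the existence of a non-separating edge and the preservation of 2-connectivity are actually proved, the forward direction is incomplete; as written, one would have to fall back on citing Starr, which is what the paper does.
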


Note that in the proof of Theorem \ref{thm:Starr2}, Starr proved that if $Y$ is a collection of essential simple closed curves such that $\Gamma_X(Y)$ is 2-connected (and therefore $Y$ is diskbusting), then there exists a pants decomposition $\mcP$ of $H$ such that $X \subset \mcP$ and $Y$ is collectively 1-seamed with respect to $\mcP$.

\begin{figure}[ht]
\labellist
\footnotesize \hair 2pt
\pinlabel \textcolor{red}{$\mcP$} at 85 32
\pinlabel \textcolor{blue}{$Y_1$} at 66 32
\pinlabel \textcolor{cyan}{$Y_2$} at 75 80
\endlabellist
\centering \scalebox{1.5}{\includegraphics{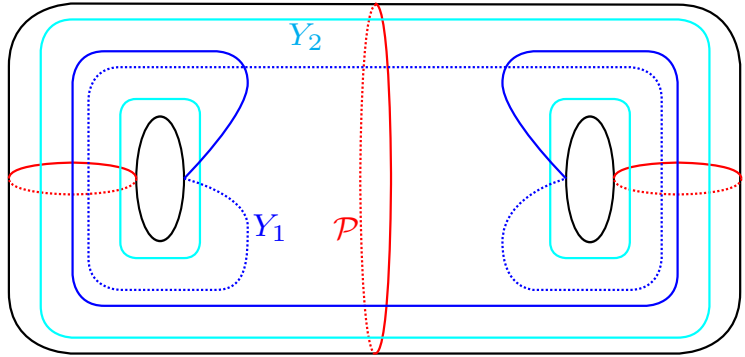}}
\caption{If $\mcP$ is a pants decomposition of $H$, then $Y_1$ and $Y_2$ are both diskbusting on $H$.}\label{fig:diskbusting}
\end{figure}

The following definition is a generalization of the notion of diskbusting to distance 3.  The term ``nearly fills'' was chosen since ``almost fills'' is already used by Hempel in \cite{He} for an alternate purpose.

\begin{defin}
Suppose $Y$ is a finite set of disjoint essential simple closed curves in the boundary $\Sigma$ of a handlebody $H$.  Then $Y$ \emph{nearly fills $H$} if for every meridian $d$ of $H$, the union of $d$ and $Y$ fills $\Sigma$.  That is, $\Sigma - (Y \cup d)$ is a collection of 2-cells.
\end{defin}
\begin{prop} \label{prop:nearlyfills}
Let $H$ be a handlebody with $\partial H = \Sigma$.  Then
\begin{enumerate}
\item A finite collection $Y$ of pairwise disjoint essential curves in $\Sigma$ nearly fills $H$ if and only if for every essential curve $\gamma$ that is disjoint from $Y$, $d(D(H),\gamma) \geq 2$.
\item A single simple closed curve $y$ in $\Sigma$ nearly fills $H$ if and only if $d(D(H),y) \geq 3$.
\end{enumerate}
\end{prop}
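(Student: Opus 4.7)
My plan is to prove part (1) directly and then deduce part (2) by specializing to $Y=\{y\}$, with each direction of (1) handled by contrapositive. For the forward direction, suppose $Y$ nearly fills $H$ and that $\gamma$ is essential, disjoint from $Y$, with $d(D(H),\gamma)\leq 1$. Then there is a meridian $d$ disjoint from $\gamma$: either $d(D(H),\gamma)=1$ gives one directly, or $\gamma$ is itself a meridian and $d$ can be taken as a disjoint parallel copy of $\gamma$. Using Lemma \ref{lem:efficientisotopy}, I find a representative $\gamma'$ of the isotopy class of $\gamma$ disjoint from $Y\cup d$. Since $Y\cup d$ fills $\Sigma$, $\gamma'$ lies in a complementary disk, contradicting its essentiality.

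For the reverse direction of (1), suppose $Y$ does not nearly fill $H$, so some meridian $d$ yields a component $R$ of $\Sigma-(Y\cup d)$ that is not a disk. Since $\Sigma$ is closed of genus at least $2$ and $R$ is a connected open subsurface that is not a disk, $R$ cannot be a sphere either, so $\pi_1(R)\neq 1$ and $R$ contains a non-contractible simple closed curve $\gamma$. The key step, and the main obstacle I anticipate, is upgrading $\gamma$ from non-contractible in $R$ to essential in $\Sigma$. If $\gamma$ bounded a disk $D\subset\Sigma$, then each curve of $Y\cup d$, being disjoint from $\partial D=\gamma$, would lie entirely inside $D$ or entirely outside; a curve inside $D$ would be a simple closed curve in a disk in $\Sigma$ and hence inessential, contradicting the essentiality of every curve in $Y$ and of the meridian $d$. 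Thus $D$ is disjoint from $Y\cup d$ and lies in $R$, making $\gamma$ contractible in $R$, a contradiction. Hence $\gamma$ is essential in $\Sigma$, disjoint from both $Y$ and $d$, giving $d(D(H),\gamma)\leq 1$ and contradicting the hypothesis.

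For part (2), I specialize (1) to $Y=\{y\}$ and observe that $d(D(H),y)\geq 3$ is equivalent to the assertion that every essential curve $\gamma$ disjoint from $y$ satisfies $d(D(H),\gamma)\geq 2$. Indeed, $d(D(H),y)\leq 2$ holds exactly when there is an essential curve disjoint from $y$ (or a parallel copy of $y$ itself if $y$ is a meridian) at distance at most $1$ from $D(H)$, which is precisely the failure of the condition in part (1) applied to $\{y\}$. So (2) is immediate from (1).
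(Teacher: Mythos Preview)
Your proof is correct and follows essentially the same approach as the paper: both directions of (1) are argued by contraposition/contradiction using the definition of filling, and (2) is deduced from (1). Your reverse direction of (1) spells out in detail why a non-disk complementary region contains a curve essential in $\Sigma$, which the paper simply takes as the characterization of filling; and your treatment of (2) covers both directions explicitly, whereas the paper only writes out the forward implication. The invocation of Lemma~\ref{lem:efficientisotopy} in the forward direction of (1) is more than strictly needed but is a valid way to realize $\gamma$ simultaneously disjoint from $Y$ and $d$.
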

\begin{proof}
We begin with the proof of (1).  Suppose $Y$ nearly fills $H$.  Let $\beta$ be an essential curve disjoint from $Y$ and $d$ an arbitrary meridian of $H$.  If $\beta$ is disjoint from $d$, then $d \cup Y$ does not fill $\Sigma$, which contradicts our assumption.  Hence $\beta$ must intersect every meridian of $H$ and therefore $d(D(H),\gamma) \geq 2$.

Conversely, suppose $Y$ is a collection of pairwise disjoint essential simple closed curves with the property that any essential curve $\gamma$ disjoint from $Y$ satisfies $d(D(H),\gamma) \geq 2$.  Let $d$ be an arbitrary meridian of $H$.  If $d \cup Y$ does not fill $\Sigma$, then there exists some essential curve $\beta$ in $\Sigma$ that lies disjoint from $d$ and $Y$.  Since $d \in D(H)$, $d(D(H),\beta) < 2$ and we have a contradiction.

For the proof of (2), suppose $y$ is a single simple closed curve that nearly fills $H$.  By part (1), any curve $\gamma$ disjoint from $y$ satisfies $d(D(H),\gamma) \geq 2$.  Hence the length of any path between $y$ to $D(H)$ must be at least 3.
\end{proof}

\section{Main Theorems} \label{sec:main}

Suppose $H$ is a handlebody with $\partial H = \Sigma$ and consider the Heegaard splitting induced by the identity map on $\Sigma$.  The resulting 3-manifold is homeomorphic to $\#_g (S^2 \times S^1)$ and the Heegaard splitting has distance 0, as the disk sets of the two handlebodies are identical.  Hempel \cite {He} replaced the identity map with a high power of a pseudo-Anosov map to prove the existence of Heegaard splittings with arbitrarily high distance.  We give a similar result by providing a lower bound on the distance of the resulting Heegaard splitting when the identity map is replaced with a high power of a Dehn twist map.  As shown later in Lemma \ref{lem:upperbound}, Dehn twisting about a curve $\gamma$ that is distance $d$ from $D(H)$ determines a Heegaard splitting of distance at most $2d-2$.  We show that for a sufficiently high power of Dehn twists about $\gamma$ the resulting Heegaard splitting is exactly distance $2d-2$.

We first require the following utility lemma, which is a generalization of Lemma \ref{lem:disjointcircling}.

\begin{lem} \label{lem:twistchain}
Let $X = \{x_1,x_2,...,x_s\}$ be a collection of pairwise disjoint curves in $\Sigma$.  Suppose $y$ is a simple closed curve in $\Sigma$ with annular neighborhood $N$ that is fibered with respect to $X$.  Let $\alpha_0,\alpha_1,...,\alpha_n$ be a collection of essential simple closed curves such that each intersects $y$, each is in efficient position with respect to $(X,y,N)$, and $\alpha_j$ and $\alpha_{j+1}$ are disjoint for $0 \leq j \leq n-1$.  Assume a component of $\alpha_n \cap N$ has twisting number equal to $m_n$ with respect to $X$.  If $m_i$ is the twisting number of a component of $\alpha_i \cap N$ for some $0 \leq i \leq n-1$, then $|m_n - m_i| \leq |n-i|$.
\end{lem}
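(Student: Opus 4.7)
The plan is to prove Lemma \ref{lem:twistchain} by induction on $|n-i|$, using Lemma \ref{lem:disjointcircling} as the base case. Without loss of generality I will assume $i < n$ and write $k = n - i$, and I will show the slightly stronger-looking statement that $|m_n - m_i| \leq k$ where $m_n$ is the twisting number of \emph{any} chosen component of $\alpha_n \cap N$ and $m_i$ is the twisting number of \emph{any} chosen component of $\alpha_i \cap N$.

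For the base case $k = 1$, pick any component $c_i$ of $\alpha_i \cap N$ with twisting number $m_i$ and any component $c_{i+1}$ of $\alpha_{i+1} \cap N$ with twisting number $m_{i+1}$; these exist because each $\alpha_j$ meets $y$ and each $\alpha_j$ is in efficient position with $\partial N$, so $\alpha_j \cap N$ is a nonempty collection of properly embedded arcs each intersecting the $I$-fibers of $N$ efficiently. Since $\alpha_i$ and $\alpha_{i+1}$ are disjoint simple closed curves, $c_i$ and $c_{i+1}$ are disjoint properly embedded arcs in $N$. Lemma \ref{lem:disjointcircling} therefore gives $|m_i - m_{i+1}| \leq 1$, which handles the base case.

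For the inductive step, suppose the claim holds whenever the index gap is at most $k-1$, and consider the case $n - i = k \geq 2$. Because $\alpha_{i+1}$ meets $y$ and is in efficient position, $\alpha_{i+1} \cap N$ is nonempty; pick any component with twisting number $m_{i+1}$. The base case applied to the disjoint pair $(\alpha_i, \alpha_{i+1})$ yields $|m_i - m_{i+1}| \leq 1$, while the inductive hypothesis applied to the subchain $\alpha_{i+1}, \alpha_{i+2}, \dots, \alpha_n$ yields $|m_n - m_{i+1}| \leq k - 1$. The triangle inequality then gives
\[
|m_n - m_i| \;\leq\; |m_n - m_{i+1}| + |m_{i+1} - m_i| \;\leq\; (k-1) + 1 \;=\; k,
\]
as required.

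The only real subtlety is verifying that each intermediate $\alpha_j$ supplies at least one component in $N$ to serve as the link in the chain, but this is immediate from the hypothesis that each $\alpha_j$ intersects $y$ together with the efficient-position condition. No obstruction of substance remains; the lemma reduces to Lemma \ref{lem:disjointcircling} iterated along the chain.
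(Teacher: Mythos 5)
Your proof is correct and is essentially the paper's own argument: both select one properly embedded arc of $\alpha_j \cap N$ for each intermediate $j$ (which exists since each $\alpha_j$ meets $y$ and is in efficient position), apply Lemma \ref{lem:disjointcircling} to each consecutive disjoint pair, and conclude by the triangle inequality. Packaging the chain as an induction on $|n-i|$ rather than summing the bounds directly is only a cosmetic difference.
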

\begin{proof}
Let $b_n$ be the component of $\alpha_n \cap N$ that is assumed to have twisting number $m_n$.  For a fixed $i$ such that $0 \leq i \leq n-1$, let $m_i$ be the twisting number of a component of $\alpha_i \cap N$.  Then for $i < j < n$, there exists some component $b_j$ of $\alpha_j \cap N$ that is a properly embedded arc in $N$ since each $\alpha_j$ nontrivially intersects $y$.  Let $m_j$ denote the twisting number of $b_j$ in $N$ with respect to $X$.  Then by Lemma \ref{lem:disjointcircling}, as $b_{j+1}$ and $b_j$ are disjoint we have that $|m_{j+1} - m_j| \leq 1$ for each $j$.  So by the triangle inequality, $|m_n - m_i| \leq |n-i|$.
\end{proof}

We now prove an extension of a result of Casson and Gordon \cite{CG}, whose result is the following.  

\begin{thm} \label{thm:cg}
\textnormal{(Casson-Gordon \cite{CG}).} Let $H$ be a handlebody with $\partial H = \Sigma$ and $\gamma$ a simple closed curve such that $d(D(H),\gamma) \geq 2$.  Let $\mcP$ be a pants decomposition of $H$ (so $K_{\mcP} = D(H)$) such that $\gamma$ is 1-seamed with respect to $\mcP$.  Then for $k \geq 2$, $d(K_{\mcP},K_{\tau^k_{\gamma}(\mcP)}) \geq 2$.
\end{thm}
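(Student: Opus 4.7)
The plan is to argue by contradiction. Since $K_{\tau^k_{\gamma}(\mcP)} = \tau^k_{\gamma}(K_{\mcP}) = \tau^k_{\gamma}(D(H))$, if $d(K_{\mcP},K_{\tau^k_{\gamma}(\mcP)}) < 2$, then there exist meridians $\alpha$ and $\alpha'$ of $H$ such that $\alpha$ and $\tau^k_{\gamma}(\alpha')$ are either isotopic or disjoint on $\Sigma$. The bulk of the work is then to show that $\tau^k_{\gamma}(\alpha')$ is $1$-seamed with respect to $\mcP$, after which a short combinatorial argument rules out both possibilities.

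The first step is to apply Lemma \ref{lem:pantswinding}(b) to $\alpha'$ with $Y=\{\gamma\}$. The hypotheses are in place: $\gamma$ is $1$-seamed with respect to $\mcP$ by assumption; by Lemma \ref{lem:efficientisotopy} we may isotope $\alpha'$ so that it intersects both $\mcP$ and $\gamma$ efficiently; and $\alpha'$ genuinely meets $\gamma$ since $d(D(H),\gamma) \geq 2$ forces $\gamma$ to be diskbusting. Thus $tw(\tau^k_{\gamma}(\alpha'),\gamma) \geq k \geq 2$. By the remark preceding Lemma \ref{lem:pantswinding} (if $|tw(\cdot,y)| > 1$ then the curve is $1$-seamed), we conclude that $\tau^k_{\gamma}(\alpha')$ is $1$-seamed with respect to $\mcP$.

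The contradiction now follows quickly. By the observation following the definition of $1$-seamed, neither $\tau^k_{\gamma}(\alpha')$ itself nor any simple closed curve disjoint from it can contain a wave of a pants component of $\Sigma - \mcP$; in particular $\alpha$ contains no wave. On the other hand, Lemma \ref{lem:compdiskwave} forces the meridian $\alpha$ either to be isotopic to a curve of $\mcP$ or to contain at least two waves, so $\alpha$ must be isotopic to some component of $\mcP$. But a $1$-seamed curve meets every component of $\mcP$ non-trivially, so $\tau^k_{\gamma}(\alpha')$ can be neither disjoint from nor isotopic to $\alpha$, completing the contradiction.

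The main point requiring care is setting up the efficient-position hypotheses of Lemma \ref{lem:pantswinding}(b)---confirming via Lemma \ref{lem:efficientisotopy} that $\alpha'$ can simultaneously be placed in efficient position with $\mcP$ and $\gamma$---and correctly converting the quantitative bound $tw(\tau^k_{\gamma}(\alpha'),\gamma) \geq k$ into the qualitative statement that $\tau^k_{\gamma}(\alpha')$ is $1$-seamed. Beyond this bookkeeping the argument is essentially immediate from Lemma \ref{lem:compdiskwave} and the basic properties of $1$-seamed curves, and I do not foresee any deeper obstacle.
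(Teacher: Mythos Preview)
Your proof is correct and is essentially the $d=2$ specialization of the paper's own Lemma~\ref{lem:singlecurve}: both arguments use Lemma~\ref{lem:pantswinding}(b) to show that any element of $K_{\tau^k_{\gamma}(\mcP)}$ has twisting number $\geq k \geq 2$ about $\gamma$ and is therefore $1$-seamed with respect to $\mcP$, and then derive a contradiction with distance $\leq 1$ from $K_{\mcP}$. The only cosmetic difference is in the endgame: the paper concludes via Theorem~\ref{thm:Starr2} (a $1$-seamed curve is diskbusting, hence distance $\geq 2$ from $K_{\mcP}$), while you reach the same contradiction more directly from Lemma~\ref{lem:compdiskwave} and the wave observation. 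The paper itself does not give a standalone proof of Theorem~\ref{thm:cg}, instead citing the Casson--Gordon rectangle condition, so your argument is in fact closer to the paper's own machinery than the reference it points to.
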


The above result of Casson and Gordon can be proved by showing that Dehn twisting at least twice about a 1-seamed curve will yield a Heegaard diagram that satisfies the Casson-Gordon rectangle condition (see Example 3 in \cite{Ko} for details).

Note that if $d(D(H),\gamma) \leq 1$, then there exists some meridian $\alpha$ in $H$ that is disjoint from $\gamma$, and therefore fixed by $\tau_{\gamma}$. Consequently, the Heegaard splitting induced by $\tau^k_{\gamma}$ will always have distance 0.   So if we are interested in creating Heegaard splittings of non-zero distance, we need to assume that $d(D(H),\gamma) \geq 2$.  Then by Theorem \ref{thm:Starr2}, there is a pants decomposition $\mcP$ of $H$ such that $\gamma$ is 1-seamed with respect to $\mcP$.

\begin{lem} \label{lem:singlecurve}
Let $H$ be a handlebody with $\partial H = \Sigma$ and $\gamma$ a simple closed curve such that $d(D(H),\gamma) \geq d$ for $d \geq 2$.  Let $\mcP$ be a pants decomposition of $H$ (so $K_{\mcP} = D(H)$) such that $\gamma$ is 1-seamed with respect to $\mcP$.  Then 
\begin{displaymath}
   d(K_{\mcP},K_{\tau^k_{\gamma}(\mcP)}) \geq \left\{
     \begin{array}{ll}
       $k$ & \mbox{if $2 \leq k \leq 2d-2$,}\\
       $2d-2$ & \mbox{if $k \geq 2d-2$.}
     \end{array}
   \right.
\end{displaymath} 
\end{lem}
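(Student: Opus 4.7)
The plan is to take an arbitrary path $\alpha_0, \alpha_1, \ldots, \alpha_n$ in $C(\Sigma)$ with $\alpha_0 \in K_{\mcP}$ and $\alpha_n \in K_{\tau^k_\gamma(\mcP)}$ and show that $n \geq \min(k, 2d-2)$. I will split into two cases, depending on whether some intermediate vertex of the path is disjoint from $\gamma$, and extract the required lower bound on $n$ in each.

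For Case 1, suppose some $\alpha_j$ is disjoint from $\gamma$. Since $\tau_\gamma$ fixes $\gamma$, it acts as an isometry of $C(\Sigma)$ carrying $K_\mcP$ to $K_{\tau^k_\gamma(\mcP)}$, so $d(K_{\tau^k_\gamma(\mcP)}, \gamma) = d(K_\mcP, \gamma) \geq d$. The triangle inequality then gives $d(K_\mcP, \alpha_j) \geq d-1$ and $d(K_{\tau^k_\gamma(\mcP)}, \alpha_j) \geq d-1$, which force $j \geq d-1$ and $n-j \geq d-1$, so $n \geq 2d-2$.

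For Case 2, every $\alpha_j$ meets $\gamma$. Let $N$ be an annular neighborhood of $\gamma$ fibered with respect to $\mcP$. By iterated application of Lemma \ref{lem:efficientisotopy}, starting at $\alpha_n$ and working downward, I isotope each $\alpha_j$ into efficient position with respect to $(\mcP, \gamma, N)$ while preserving the disjointness of each consecutive pair. Writing $\alpha_n = \tau^k_\gamma(\delta)$ for some $\delta \in K_\mcP$, Lemma \ref{lem:pantswinding}(b) yields $tw(\alpha_n, \gamma) \geq k$; after further pushing $\alpha_n$ across outer triangles of $N$ (and re-adjusting the subsequently-isotoped $\alpha_{n-1}, \ldots, \alpha_0$ so that consecutive disjointness is maintained) I may assume some component $b_n$ of $\alpha_n \cap N$ has twisting number $m_n \geq k$. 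At the other end, $\alpha_0$ bounds a disk in $H$, so Lemma \ref{lem:compdiskwave} forces $\alpha_0$ either to be parallel to a curve of $\mcP$ --- in which case $\alpha_0 \cap \mcP = \emptyset$ in efficient position and every component of $\alpha_0 \cap N$ has twisting $0$ --- or to contain a wave $w$ of a pair of pants, in which case the argument inside the proof of Lemma \ref{lem:pantswinding}(b) shows the components of $w \cap N$ have twisting $0$. Either way some component $b_0$ of $\alpha_0 \cap N$ has twisting $m_0 = 0$. Lemma \ref{lem:twistchain} now gives $|m_n - m_0| \leq n$, whence $k \leq n$.

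Combining the two cases yields $n \geq \min(k, 2d-2)$, matching the stated piecewise lower bound. The main obstacle is the bookkeeping in Case 2: one must choose embeddings of all the $\alpha_j$ simultaneously in efficient position with respect to $(\mcP, \gamma, N)$, with consecutive pairs disjoint, with $\alpha_n$ realizing the lower bound $tw(\alpha_n, \gamma) \geq k$, and with $\alpha_0$ exhibiting an explicit twisting-$0$ component, before Lemma \ref{lem:twistchain} can be invoked. The crucial quantitative input is that the disk-bounding curve $\alpha_0$ supplies a twisting-$0$ arc in $N$ (rather than merely something of twisting at most $1$), which is what converts the chain inequality from $n \geq k-1$ into the sharp bound $n \geq k$; Lemma \ref{lem:efficientisotopy} together with the outer-triangle pushing discussed in Section 3 allow the embeddings to be chosen coherently.
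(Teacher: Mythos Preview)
Your proof is correct and follows the same strategy as the paper's: both arguments reduce to showing that every $\alpha_j$ in a short path must meet $\gamma$ (your Case~1 is simply the contrapositive of the paper's derivation from $\ell<2d-2$), and then combine Lemma~\ref{lem:pantswinding}(b) with Lemma~\ref{lem:twistchain} to force the path to be long. The one cosmetic difference is the endgame of Case~2: the paper stops the chain at $\alpha_1$, observing that every component of $\alpha_1\cap N$ has twisting $\geq k-(\ell-1)\geq 2$, so $\alpha_1$ is $1$-seamed and hence diskbusting, contradicting $d(K_\mcP,\alpha_1)\leq 1$; you instead carry the chain to $\alpha_0$ and exhibit a twisting-$0$ arc there. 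The paper's route is marginally cleaner precisely because ``$\alpha_1$ is $1$-seamed'' is an isotopy-invariant conclusion, so it sidesteps the simultaneous-embedding bookkeeping you correctly flag in your final paragraph.
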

\begin{proof}
To simplify notation, let $\mcP' = \tau_{\gamma}^{k}(\mcP)$.

Suppose, for the sake of a contradiction, that $d(K_{\mcP}, K_{\mcP'}) = \ell$ for some $\ell < k$ and $\ell < 2d-2$.  Then there exists a sequence of simple closed curves $\alpha_0, \alpha_1, ..., \alpha_{\ell}$ such that consecutive curves are disjoint, $\alpha_0 \in K_{\mcP}$, and $\alpha_{\ell} \in K_{\mcP'}$.  

Note that $d(K_{\mcP},\gamma) = d(K_{\mcP'},\gamma)$ since any path in $C(\Sigma)$ between $K_{\mcP}$ and $\gamma$ can be sent by $\tau^k_{\gamma}$ to a path between $K_{\mcP'}$ and $\gamma$ (since $\tau^k_{\gamma}$ fixes $\gamma$) and conversely, any path between $K_{\mcP'}$ and $\gamma$ can be sent to a path between $K_{\mcP}$ and $\gamma$ via $\tau^{-k}_{\gamma}$. Therefore $d(K_{\mcP},\gamma) \geq d$ implies that $d(K_{\mcP'},\gamma) \geq d$.

Since $\ell < 2d-2$, $j \leq d-2$ or $\ell-j \leq d-2$ for any $j$ such that $0 \leq j \leq \ell$.  Consequently, $\alpha_j$ is a distance at most $d-2$ from at least one of $\alpha_0$ or $\alpha_\ell$ in $C(\Sigma)$.  As $\alpha_0 \in K_{\mcP}$ and $\alpha_\ell \in K_{\mcP'}$, we have that $d(K_{\mcP},\alpha_j) \leq d-2$ or $d(K_{\mcP'}, \alpha_j) \leq d-2$.  Then $\gamma$ being at least distance $d$ from both $K_{\mcP}$ and $K_{\mcP'}$ implies that $\alpha_j$ must be at least distance 2 from $\gamma$.  Hence, $\alpha_j$ intersects $\gamma$ for each $0 \leq j \leq \ell$.

As $\alpha_\ell$ is a component of $K_{\mcP'}$, $\alpha_\ell = \tau^k_{\gamma}(\beta)$ for some $\beta \in K_{\mcP}$.  Lemma \ref{lem:pantswinding}(b) then implies $tw(\alpha_\ell,\gamma) \geq k$ with respect to $\mcP$.  Let $N$ be an annular neighborhood of $\gamma$ that is fibered with respect to $\mcP$ and assume $\alpha_0,\alpha_1,...,\alpha_\ell$ have all been isotoped so that they are each in efficient position with respect to $(\mcP,\gamma,N)$, intersect each other efficiently, and a component of $\alpha_\ell \cap N$ has twisting number $\geq k$ in $N$ with respect to $\mcP$.  Then by Lemma \ref{lem:twistchain}, each component of $\alpha_1 \cap N$ must have a twisting number of at least $k - (\ell - 1) \geq 2$ in $N$ with respect to $\mcP$. So $\alpha_1$ is 1-seamed with respect to $\mcP$ (since $\gamma$ is 1-seamed) and therefore $d(K_{\mcP},\alpha_1) \geq 2$ by Theorem \ref{thm:Starr2}.  But this is a contradiction as $d(K_{\mcP},\alpha_1) = 1$. Hence, $d(K_{\mcP},K_{\mcP'}) \geq \min\{k,2d-2\}$.
\end{proof}

One of the key arguments needed in the proof of Lemma \ref{lem:singlecurve} is confirming that, for any sequence $\alpha_0,\alpha_1,...,\alpha_{\ell}$ of simple closed curves that form a path in $C(\Sigma)$ between $K_{\mcP}$ and $K_{\tau^k_{\gamma}(\mcP)}$ of length less than $2d-2$, each $\alpha_j$ must intersect $\gamma$.  This allows us to use Lemma \ref{lem:twistchain} to show that the twisting numbers of each $\alpha_j$ about $\gamma$ are dependent on $k$.  In particular, their twisting numbers will increase as $k$ increases.  Therefore, we would expect that for sufficiently large values of $k$, the shortest path between $K_{\mcP}$ and $K_{\tau^k_{\gamma}(\mcP)}$ must include a curve that is disjoint from $\gamma$.  The following lemma shows it is not hard to construct a path of length $2d-2$ between $K_{\mcP}$ and $K_{\tau^k_{\gamma}(\mcP)}$ that indeed includes such a disjoint curve.

\begin{lem} \label{lem:upperbound}
Let $\mcP$ be a pants decomposition of $\Sigma$.  Suppose $\gamma$ is a simple closed curve such that $d(K_{\mcP},\gamma) = d$ for some $d \geq 1$.  Then for any $k \in \mathbb{Z}$, $d(K_{\mcP},K_{\tau^k_{\gamma}(\mcP)}) \leq 2d-2$.
\end{lem}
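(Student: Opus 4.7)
The plan is to construct an explicit path of length $2d-2$ in $C(\Sigma)$ from a vertex of $K_{\mcP}$ to a vertex of $K_{\tau_{\gamma}^k(\mcP)}$. Since $d(K_{\mcP},\gamma)=d$, fix a geodesic $\alpha_0,\alpha_1,\ldots,\alpha_d$ realizing this distance, with $\alpha_0\in K_{\mcP}$ and $\alpha_d=\gamma$, and with consecutive vertices representing disjoint curves. In particular $\alpha_{d-1}$ is disjoint from $\gamma$.

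Next I would observe that $\tau_{\gamma}^k$, being a self-homeomorphism of $\Sigma$ that sends $\mcP$ to $\tau_{\gamma}^k(\mcP)$, extends to a homeomorphism $V_{\mcP}\to V_{\tau_{\gamma}^k(\mcP)}$, and so restricts to a simplicial isomorphism $K_{\mcP}\to K_{\tau_{\gamma}^k(\mcP)}$. Applying $\tau_{\gamma}^k$ to the reversed geodesic produces a second path
$$\gamma=\tau_{\gamma}^k(\alpha_d),\;\tau_{\gamma}^k(\alpha_{d-1}),\;\ldots,\;\tau_{\gamma}^k(\alpha_0)$$
in $C(\Sigma)$, whose final vertex lies in $K_{\tau_{\gamma}^k(\mcP)}$.

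The crux is that since $\alpha_{d-1}$ is disjoint from $\gamma$, the Dehn twist $\tau_{\gamma}^k$ fixes $\alpha_{d-1}$ up to isotopy, so $\tau_{\gamma}^k(\alpha_{d-1})=\alpha_{d-1}$ as vertices of $C(\Sigma)$. This lets me splice the two paths at $\alpha_{d-1}$ rather than at $\gamma$, which saves one edge on each side. The concatenated path
$$\alpha_0,\;\alpha_1,\;\ldots,\;\alpha_{d-1},\;\tau_{\gamma}^k(\alpha_{d-2}),\;\ldots,\;\tau_{\gamma}^k(\alpha_1),\;\tau_{\gamma}^k(\alpha_0)$$
has $(d-1)+(d-1)=2d-2$ edges, yielding $d(K_{\mcP},K_{\tau_{\gamma}^k(\mcP)})\le 2d-2$.

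I do not anticipate any real obstacle: the whole argument reduces to the splicing observation that $\tau_{\gamma}^k$ pointwise (up to isotopy) fixes the penultimate vertex of any geodesic ending at $\gamma$. The edge case $d=1$ is degenerate but consistent, since then $\alpha_0$ itself is disjoint from $\gamma$ and hence $\alpha_0=\tau_{\gamma}^k(\alpha_0)$ lies simultaneously in $K_{\mcP}$ and $K_{\tau_{\gamma}^k(\mcP)}$, giving distance $0=2d-2$.
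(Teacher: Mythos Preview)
Your proposal is correct and follows essentially the same approach as the paper: both take a geodesic $\alpha_0,\ldots,\alpha_d$ from $K_{\mcP}$ to $\gamma$, observe that $\alpha_{d-1}$ is fixed by $\tau_\gamma^k$ since it is disjoint from $\gamma$, and splice the original path with its $\tau_\gamma^k$-image at $\alpha_{d-1}$ to obtain a path of length $2d-2$. Your explicit remark that $\tau_\gamma^k$ carries $K_{\mcP}$ isomorphically onto $K_{\tau_\gamma^k(\mcP)}$ is a detail the paper leaves implicit, and your treatment of the $d=1$ case matches the paper's as well.
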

\begin{proof}
For the duration of this proof, let $\mcP' = \tau^k_{\gamma}(\mcP)$.

If $d(K_{\mcP},\gamma) = 1$, there exists some $\alpha \in K_{\mcP}$ that is disjoint from $\gamma$.  Then $\tau_{\gamma}$ fixes $\alpha$ and therefore $\alpha \in K_{\mcP'}$ and $d(K_{\mcP},K_{\mcP'}) = 0$.  

So suppose $d > 1$. By definition, $d(K_{\mcP},\gamma) = d$ implies there exists a sequence of simple closed curves $\alpha_0, \alpha_1, ..., \alpha_{d}$ such that $\alpha_j \cap \alpha_{j+1} = \emptyset$ for $0 \leq j < d$, $\alpha_0 \in K_{\mcP}$, and $\alpha_d = \gamma$.

As $\alpha_{d-1}$ is disjoint from both $\gamma$ and $\alpha_{d-2}$, $\alpha_{d-1}$ is also disjoint from $\tau^k_{\gamma}(\alpha_{d-2})$.  We then have the following path in $C(\Sigma)$ of $2d-2$ curves between $K_{\mcP}$ and $K_{\mcP'}$ (see Figure \ref{fig:path}):

$$\alpha_0, \alpha_1, ...,\alpha_{d-2}, \alpha_{d-1}, \tau^k_{\gamma}(\alpha_{d-2}),..., \tau^k_{\gamma}(\alpha_1), \tau^k_{\gamma}(\alpha_0).$$

\noindent Hence $d(K_{\mcP},K_{\mcP'}) \leq 2d-2$.
\end{proof}

\begin{figure}[ht]
\labellist
\footnotesize \hair 2pt
\pinlabel $\alpha_0$ at 2 14
\pinlabel $\alpha_1$ at  43 14
\pinlabel $\alpha_{d-2}$ at 82 14
\pinlabel $\alpha_{d-1}$ at 123 14
\pinlabel $\shortparallel$ at 123 8
\pinlabel $\tau^k_{\gamma}(\alpha_{d-1})$ at 123 1
\pinlabel $\gamma$ at 117 53
\pinlabel $\tau^k_{\gamma}(\alpha_{d-2})$ at 162 14
\pinlabel $\tau^k_{\gamma}(\alpha_1)$ at 207 14
\pinlabel $\tau^k_{\gamma}(\alpha_0)$ at 242 14
\pinlabel $K_{P}$ at 2 40
\pinlabel $K_{\tau^k_{\gamma}(\mcP)}$ at 242 40
\endlabellist
\centering \scalebox{1.4}{\includegraphics{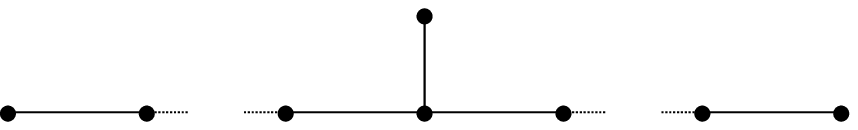}}
\caption{Path in $C(\Sigma)$ between $K_{\mcP}$ and $K_{\mcP'}$.}\label{fig:path}
\end{figure}

Combining the above two lemmas then gives the following result.

\begin{thm} \label{thm:exact}
Let $H$ be a handlebody with $\partial H = \Sigma$ and $\gamma$ a simple closed curve such that $d(D(H),\gamma) = d$ for $d \geq 2$.  Let $\mcP$ be a pants decomposition of $H$ (so $K_{\mcP} = D(H)$) such that $\gamma$ is 1-seamed with respect to $\mcP$.  Then for $k \geq 2d-2$, $d(K_{\mcP},K_{\tau^k_{\gamma}(\mcP)}) = 2d-2$.
\end{thm}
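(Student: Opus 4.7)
The plan is to derive Theorem \ref{thm:exact} directly by combining the two-sided bounds already established in Lemmas \ref{lem:singlecurve} and \ref{lem:upperbound}. The hypotheses of the theorem are exactly the hypotheses of both lemmas: $H$ is a handlebody, $d(D(H),\gamma)=d\geq 2$, and $\mcP$ is a pants decomposition of $H$ with $K_{\mcP}=D(H)$ for which $\gamma$ is $1$-seamed with respect to $\mcP$.

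First, I would invoke Lemma \ref{lem:singlecurve}. Since $d(D(H),\gamma)=d$ certainly satisfies $d(D(H),\gamma)\geq d$, the lemma applies. For the range $k\geq 2d-2$, the piecewise lower bound of Lemma \ref{lem:singlecurve} is $2d-2$, so
\[
d\bigl(K_{\mcP},\,K_{\tau^k_{\gamma}(\mcP)}\bigr)\;\geq\;2d-2.
\]

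Next, I would invoke Lemma \ref{lem:upperbound} with the same pants decomposition $\mcP$ and curve $\gamma$. Since $d(K_{\mcP},\gamma)=d\geq 2\geq 1$, that lemma gives
\[
d\bigl(K_{\mcP},\,K_{\tau^k_{\gamma}(\mcP)}\bigr)\;\leq\;2d-2
\]
for every $k\in\mathbb{Z}$, and in particular for $k\geq 2d-2$. Combining the two inequalities yields equality, which is the desired conclusion.

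Since both lemmas are already proved in the preceding pages, there is no real obstacle here; the only thing to check is that the hypotheses match and that the case distinction in Lemma \ref{lem:singlecurve} is applied on the correct branch (namely $k\geq 2d-2$, giving the lower bound $2d-2$ rather than $k$). Thus the proof is essentially a one-paragraph synthesis: assemble the lower bound from Lemma \ref{lem:singlecurve}, the upper bound from Lemma \ref{lem:upperbound}, and conclude equality.
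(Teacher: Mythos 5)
Your proposal is correct and is essentially identical to the paper's own proof: the lower bound comes from Lemma \ref{lem:singlecurve} on the branch $k \geq 2d-2$, the upper bound from Lemma \ref{lem:upperbound}, and equality follows. No issues.
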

\begin{proof}
By Lemma \ref{lem:singlecurve}, for $k \geq 2d-2$, $d(K_{\mcP},K_{\tau^k_{\gamma}(\mcP)}) \geq 2d-2$.  On the other hand, Lemma  \ref{lem:upperbound} shows that Dehn twisting about a curve $\gamma$ such that $d(K_{\mcP},\gamma) = d$ implies that $d(K_{\mcP},K_{\tau^k_{\gamma}(\mcP)}) \leq 2d-2$.  Hence we achieve the desired equality.
\end{proof}

If we started with a simple closed curve $\gamma$ such that $d(D(H),\gamma) = 3$, the above theorem shows that for $k \geq 4$, $V_{\mcP} \cup_{\Sigma} V_{\tau^k_{\gamma}(\mcP)}$ is a 3-manifold that admits a Heegaard splitting of distance exactly 4.  While we lack the nice characterization of distance 3 curves that the work of Starr \cite{Starr} (Theorems \ref{thm:Starr1} and \ref{thm:Starr2}) and others provide for diskbusting curves, some distance $\geq 3$ Heegaard splitting criterions like those provided in \cite{Sc} and \cite{He} can be adapted to help identify distance 3 curves.  So suppose we have identified such a curve $\gamma$ and that $\alpha_0, \alpha_1, \alpha_2, \alpha_3$ are simple closed curves that form a minimum-length path in $C(\Sigma)$ such that $\alpha_0 \in D(H)$ and $\alpha_3 = \gamma$.  Then Theorem \ref{thm:exact} shows that Dehn twisting about $\gamma$ a total of 4 times will produce a distance 4 Heegaard splitting.  However, the arguments of Lemmas \ref{lem:singlecurve} and \ref{lem:upperbound} also show that $d(D(H),\tau^k_{\gamma}(\alpha_0)) = 4$ for $k \geq 4$.  So if we let $\beta = \tau^k_{\gamma}(\alpha_0)$ and consider Dehn twisting about $\beta$, we can produce a distance 6 Heegaard splitting and also $d(D(H),\tau^{k'}_{\beta}(\alpha_0)) = 6$ for $k' \geq 6$.  Repeating this process allows us to produce Heegaard splittings of arbitrarily high distance that have a known exact distance.

The above manifolds that admit these high distance Heegaard splittings can be considered the result of ($\frac{1}{k}$)-Dehn surgery along a curve $\gamma$ in the double of a handlebody.  Casson and Gordon \cite{CG} generalized their Theorem \ref{thm:cg} to the case where Dehn surgery is performed in a manifold admitting a weakly reducible (distance $\leq 1$) Heegaard splitting.  Their result can be restated as the following (see the appendix of \cite{MS} for a proof).

\begin{thm} \label{thm:cg2}
\textnormal{(Casson-Gordon \cite{CG}).} Let $H_1$ and $H_2$ be handlebodies such that $\partial H_1 = \partial H_2 = \Sigma$ and $d(D(H_1,D(H_2)) \leq 1$.  Suppose $\gamma$ is a simple closed curve such that $d(D(H_1,\gamma)) \geq 2$ and $d(D(H_2,\gamma)) \geq 2$.  Let $\mcP_1$ and $\mcP_2$ be pants decompositions of $H_1$ and $H_2$ respectively (so $K_{\mcP_1} = D(H_1)$ and $K_{\mcP_2} = D(H_2)$) such that $\gamma$ is 1-seamed with respect to both $\mcP_1$ and $\mcP_2$.  Then for $k \geq 6$, $d(K_{\mcP_1},K_{\tau^k_{\gamma}(\mcP_2)}) \geq 2$.
\end{thm}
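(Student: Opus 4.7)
The plan is to adapt the argument of Lemma \ref{lem:singlecurve}: I will track twisting numbers with respect to $\mcP_2$ and then invoke Theorem \ref{thm:Starr2} to derive a diskbusting condition on $H_2$ that contradicts the hypothesis $d(D(H_1),D(H_2)) \leq 1$. Suppose for contradiction that $d(K_{\mcP_1},K_{\tau^k_\gamma(\mcP_2)}) \leq 1$, so there exist curves $\alpha \in K_{\mcP_1}$ and $\beta \in K_{\tau^k_\gamma(\mcP_2)}$ that are either equal or disjoint. Write $\beta = \tau^k_\gamma(\beta')$ with $\beta' \in D(H_2)$. Since $d(D(H_1),\gamma) \geq 2$ and $d(D(H_2),\gamma) \geq 2$, both $\alpha$ and $\beta'$ must intersect $\gamma$.

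Applying Lemma \ref{lem:pantswinding}(b) to the meridian $\beta'$ of $H_2$, with pants decomposition $\mcP_2$ and $Y = \{\gamma\}$, yields $tw(\beta,\gamma) \geq k$ measured with respect to $\mcP_2$. Fix an annular neighborhood $N$ of $\gamma$ fibered with respect to $\mcP_2$, and pick an embedding of $\beta$ that realizes this twisting bound via a component of $\beta \cap N$ with twisting number at least $k$. By Lemma \ref{lem:efficientisotopy}, isotope $\alpha$ into efficient position with respect to $(\mcP_2,\gamma,N)$ while remaining disjoint from (or equal to) $\beta$. Because $\alpha$ intersects $\gamma$, there is at least one component of $\alpha \cap N$, and each such component is disjoint from the distinguished high-twisting component of $\beta \cap N$. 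Lemma \ref{lem:disjointcircling} then forces every component of $\alpha \cap N$ to have twisting number at least $k-1$ with respect to $\mcP_2$. Passing from this representative of $\alpha$ to one with no outer triangles of $N$ only increases $\sum_j |m_j|$ and preserves the positivity of each component, so $tw(\alpha,\gamma) \geq k-1$ with respect to $\mcP_2$.

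For $k \geq 6$ (indeed for any $k \geq 3$) we have $|tw(\alpha,\gamma)| > 1$, and since $\gamma$ is $1$-seamed with respect to $\mcP_2$ the remark preceding Lemma \ref{lem:pantswinding} gives that $\alpha$ is $1$-seamed with respect to $\mcP_2$. Because $\mcP_2$ is a pants decomposition of $H_2$ and $\{\alpha\}$ is collectively $1$-seamed with respect to $\mcP_2$, Theorem \ref{thm:Starr2} implies $\alpha$ is diskbusting on $H_2$, so $d(D(H_2),\alpha) \geq 2$. However $\alpha \in D(H_1)$ together with the triangle inequality in $C(\Sigma)$ give $d(D(H_2),\alpha) \leq d(D(H_2),D(H_1)) + d(D(H_1),\alpha) \leq 1 + 0 = 1$, a contradiction.

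The main subtlety I anticipate is the asymmetric choice to work throughout with $\mcP_2$ rather than $\mcP_1$: only $\mcP_2$ is being twisted by $\tau^k_\gamma$, so Lemma \ref{lem:pantswinding}(b) directly bounds the twisting of $\beta$ about $\gamma$ with respect to $\mcP_2$, and the resulting diskbusting conclusion for $\alpha$ must be for the handlebody $H_2$ so that the hypothesis $d(D(H_1),D(H_2)) \leq 1$ can be exploited against $\alpha \in D(H_1)$. A secondary technical point is the simultaneous efficient positioning of $\alpha$ and $\beta$ in $N$ together with the outer-triangle cleanup for $\alpha$, which is handled by Lemma \ref{lem:efficientisotopy} and the fact that all relevant component twisting numbers are non-negative, so isotopies across outer triangles cannot decrease them.
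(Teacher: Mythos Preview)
Your argument is correct up through the point where you conclude that $\alpha$ is $1$-seamed with respect to $\mcP_2$, and hence diskbusting on $H_2$, i.e.\ $d(D(H_2),\alpha)\geq 2$.  The gap is in the very last step.  The inequality
\[
d(D(H_2),\alpha)\ \leq\ d(D(H_2),D(H_1)) + d(D(H_1),\alpha)\ \leq\ 1+0
\]
is not a valid application of the triangle inequality: $d(D(H_1),D(H_2))$ is the \emph{minimum} over pairs, so it only tells you that \emph{some} meridian of $H_1$ is within distance $1$ of $D(H_2)$, not that your particular $\alpha$ is.  There is no reason the curve realizing $d(D(H_1),D(H_2))$ should be $\alpha$, and disk sets have infinite diameter in $C(\Sigma)$, so you cannot bridge this.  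Thus ``$\alpha\in D(H_1)$ is diskbusting on $H_2$'' does not contradict $d(D(H_1),D(H_2))\leq 1$, and your proof does not close.  (This also explains why your apparent improvement to $k\geq 3$ should be viewed with suspicion.)

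The paper does not prove this statement directly (it cites \cite{MS}), but its proof of the more general Lemma~\ref{lem:arbmanifold} shows how to avoid exactly this trap: one works with $\mcP_1$ rather than $\mcP_2$.  The cost is that you must first bound the twisting of an arbitrary $H_2$-meridian about $\gamma$ \emph{with respect to $\mcP_1$} (Claims~1 and~2 there), which is where the extra Dehn twists are spent.  Once that is done, the endgame produces a curve $\zeta_1$ at distance $1$ from $K_{\mcP_1}$ that is $1$-seamed with respect to $\mcP_1$ and hence diskbusting on $H_1$ --- and \emph{that} is a genuine contradiction, since $\zeta_0\in K_{\mcP_1}$ and $\zeta_0\cap\zeta_1=\emptyset$ give $d(K_{\mcP_1},\zeta_1)\leq 1$ directly.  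The asymmetry you flagged is real, but it points the other way: to contradict a short path starting in $D(H_1)$, you need the diskbusting conclusion to be about $H_1$.
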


We can extend this result of Casson and Gordon with the following lemma.

\begin{lem} \label{lem:arbmanifold}
Let $H_1$ and $H_2$ be handlebodies with $\partial H_1 = \partial H_2 = \Sigma$ and let $n = \max \{1,d(D(H_1),D(H_2))\}$. Suppose $d_1,d_2$ are integers such that $d_1,d_2 \geq 2$ and $d_1 + d_2 - 2 > n$ and let $\gamma$ be a simple closed curve such that $d(D(H_1),\gamma)) \geq d_1$ and $d(D(H_2),\gamma)) \geq d_2$.  Let $\mcP_1$ and $\mcP_2$ be pants decompositions of $H_1$ and $H_2$ respectively (so $K_{\mcP_1} = D(H_1)$ and $K_{\mcP_2} = D(H_2)$) such that $\gamma$ is 1-seamed with respect to both $\mcP_1$ and $\mcP_2$.  Then
\begin{displaymath}
   d(K_{\mcP_1},K_{\tau^{k+n+2}_{\gamma}(\mcP_2)}) \geq \left\{
     \begin{array}{ll}
       k & \mbox{if $2 \leq k \leq d_1 + d_2 - 2$,}\\
       d_1 + d_2 - 2 & \mbox{if $k \geq d_1 + d_2 - 2$.}
     \end{array}
   \right.
\end{displaymath}
\end{lem}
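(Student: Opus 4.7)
The approach is to adapt the contradiction argument of Lemma \ref{lem:singlecurve} to the two-handlebody setting. Set $\mcP_2' := \tau^{k+n+2}_\gamma(\mcP_2)$ and suppose, for contradiction, that $\ell := d(K_{\mcP_1}, K_{\mcP_2'}) < \min\{k, d_1+d_2-2\}$; fix a realizing path $\alpha_0, \alpha_1, \ldots, \alpha_\ell$ in $C(\Sigma)$ with $\alpha_0 \in K_{\mcP_1}$ and $\alpha_\ell \in K_{\mcP_2'}$. The first step is to verify that every $\alpha_j$ intersects $\gamma$: since $\tau^{k+n+2}_\gamma$ fixes $\gamma$ we have $d(K_{\mcP_2'}, \gamma) = d(K_{\mcP_2}, \gamma) \geq d_2$, and if some $\alpha_j$ were disjoint from $\gamma$ then the triangle inequality would give $j+1 \geq d_1$ and $\ell-j+1 \geq d_2$, forcing $\ell \geq d_1+d_2-2$, contrary to assumption.

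Next I would extract the twisting lower bound along the path. Since $\alpha_\ell = \tau^{k+n+2}_\gamma(\beta_2)$ for some $\beta_2 \in K_{\mcP_2}$ and $\gamma$ is $1$-seamed with respect to $\mcP_2$, Lemma \ref{lem:pantswinding}(b) gives $tw(\alpha_\ell, \gamma) \geq k+n+2$ with respect to $\mcP_2$. Lemma \ref{lem:twistchain} then propagates this backward along the path (applicable by the previous paragraph) to yield $tw(\alpha_j, \gamma) \geq k+n+2-(\ell-j)$ with respect to $\mcP_2$ for every $j$; in particular, since $\ell \leq k-1$, we get $tw(\alpha_0, \gamma) \geq n+3 > n+2$, so $\alpha_0$ is $(n+2)$-seamed with respect to $\mcP_2$.

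To generate the contradiction I would introduce a realizing path $\delta_0, \ldots, \delta_n$ in $C(\Sigma)$ for $d(K_{\mcP_1}, K_{\mcP_2}) \leq n$, with $\delta_0 \in K_{\mcP_1}$ and $\delta_n \in K_{\mcP_2}$. The hypothesis $d_1+d_2-2 > n$ shows, via the same triangle inequality, that every $\delta_j$ meets $\gamma$; Lemma \ref{lem:compdiskwave} then forces components of $\delta_n \cap N(\gamma)$ to have twisting in $\{-1, 0, 1\}$ with respect to $\mcP_2$, and Lemma \ref{lem:twistchain} propagates this to $tw(\delta_0, \gamma) \leq n+1$ with respect to $\mcP_2$. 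The main obstacle I anticipate is then reconciling the lower bound on $tw(\alpha_0, \gamma)$ with the upper bound on $tw(\delta_0, \gamma)$, since $\alpha_0$ and $\delta_0$ are both disks of $H_1$ but generally distinct vertices of $C(\Sigma)$. I expect the resolution to bridge $\alpha_0$ and $\delta_0$ via the connected disk complex $D(H_1)$, noting that every vertex of $D(H_1)$ meets $\gamma$ since $d_1 \geq 2$, so Lemma \ref{lem:twistchain} still applies along the bridge; combined with the strict inequalities $\ell < k$ and $\ell < d_1+d_2-2$, this should force the twisting estimates at $\alpha_0$ to collapse to those at $\delta_0$, producing the desired conflict and completing the contradiction.
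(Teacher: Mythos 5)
Your setup is sound and, up to a mirror-image rearrangement, uses the same ingredients as the paper: every curve on a short path between the two disk sets must meet $\gamma$, Lemma \ref{lem:pantswinding}(b) seeds a large twisting number at one end, and Lemma \ref{lem:twistchain} propagates it. But the step you flag as ``the main obstacle'' is resolved incorrectly, and it is exactly the crux of the lemma. You propose to compare $tw(\alpha_0,\gamma)$ with $tw(\delta_0,\gamma)$ by running Lemma \ref{lem:twistchain} along a path joining $\alpha_0$ to $\delta_0$ inside the connected disk complex $D(H_1)$. That cannot work: Lemma \ref{lem:twistchain} bounds the change in twisting number only by the \emph{length} of the path, and there is no a priori bound on the curve-complex distance between two arbitrary meridians of $H_1$ (connectivity of $D(H_1)$ gives finiteness, not a bound in terms of $n$, $d_1$, $d_2$, $k$). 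So the two estimates you have derived --- all components of $\alpha_0\cap N$ have $\mcP_2$-twisting at least $n+3$, all components of $\delta_0\cap N$ have $\mcP_2$-twisting at most roughly $n+1$ --- never get to talk to each other, and no contradiction is produced.

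The missing idea is the one the paper uses in its Claim 2, applied there to meridians of $H_2$ and here needed for meridians of $H_1$. By Lemma \ref{lem:compdiskwave}, any meridian of $H_1$ in efficient position with respect to $(\mcP_1,\gamma,N)$ is either parallel into $\mcP_1$ or contains a wave, and in either case it has a component in $N$ of twisting number $0$ \emph{with respect to $\mcP_1$}. Such zero-twisting arcs of $\alpha_0$ and of $\delta_0$ lie in the rectangles of $N-\mcP_1$ and can be realized disjointly in $N$, so Lemma \ref{lem:disjointcircling} bounds the difference of their twisting numbers \emph{with respect to $\mcP_2$} by $1$; since all components of a single curve's intersection with $N$ are mutually within $1$ of each other, this forces $n+3 \le n+2$, the contradiction you want. (This also requires keeping all curves simultaneously in efficient position with respect to both $(\mcP_1,\gamma,N)$ and $(\mcP_2,\gamma,N)$, which you should state.) For comparison, the paper arranges the same ingredients the other way around: it first uses this disjoint-zero-twisting-arc trick to show every meridian of $H_2$ has $\mcP_1$-twisting in $[-(n+2),n+2]$, concludes via Corollary \ref{cor:winding} that every element of $K_{\tau^{k+n+2}_\gamma(\mcP_2)}$ has $\mcP_1$-twisting at least $k$, and then runs the path/contradiction argument entirely with respect to $\mcP_1$, ending with ``$\alpha_1$ is diskbusting on $H_1$'' exactly as in Lemma \ref{lem:singlecurve}. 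Either arrangement works, but without the Lemma \ref{lem:compdiskwave}--Lemma \ref{lem:disjointcircling} comparison of two meridians of the same handlebody, your argument does not close.
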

\begin{proof}
Assume that $\mcP_1$, $\mcP_2$, and $\gamma$ intersect efficiently.  Let $\bar{n} = d(D(H_1),D(H_2))$.  Then there exists a sequence of simple closed curves $\alpha_0$, ..., $\alpha_{\bar{n}}$ such that consecutive curves are disjoint, $\alpha_0 \in D(H_1)$, and $\alpha_{\bar{n}} \in D(H_2)$.  Suppose $N$ is an annular neighborhood of $\gamma$ that is fibered with respect to $\mcP_1$ and $\mcP_2$ and $\alpha_0$, ...,$\alpha_{\bar{n}}$ are all in efficient position with respect to $(\mcP_1,\gamma,N)$ and $(\mcP_2,\gamma,N)$ and also intersect each other efficiently.  Let $m_1,m_2,...,m_r$ denote the twisting numbers of the components of $\alpha_{\bar{n}} \cap N$ with respect to $\mcP_1$.

\vspace{10pt}

\noindent \textit{Claim 1:} $\max \{|m_1|,...,|m_r|\} \leq n$. 

\vspace{10pt}

\noindent \textit{Proof of Claim 1:} As $\bar{n} < d_1 + d_2 - 2$, either $j \leq d_1 - 2$ or $\bar{n}-j \leq d_2 - 2$ for all $j$ such that $0 \leq j \leq \bar{n}$.  If $j \leq d_1 - 2$, then $d(\alpha_j,D(H_1)) \leq d_1 - 2$.  On the other hand, $d(\gamma,D(H_1)) = d_1$, so it follows that $d(\alpha_j,\gamma) \geq 2$.  Similarly, the second case implies that $d(\alpha_j,D(H_2)) \leq d_2 - 2$ and therefore $d(\alpha_j,\gamma) \geq 2$.  In either case, $\alpha_j$ intersects $\gamma$ for each $0 \leq j \leq \bar{n}$.

Suppose, for the sake of a contradiction, that some component of $\alpha_{\bar{n}} \cap \gamma$ has twisting number $m^*$ with respect to $\mcP_1$ such that $|m^*| > n$. For $\bar{n} \geq 1$, Lemma \ref{lem:twistchain} implies the twisting numbers of the components of $\alpha_{1} \cap N$ will have absolute value strictly greater than $1$.  So $\alpha_{1}$ is diskbusting on $H_1$, which implies that $d(\alpha_1,D(H_1)) \geq 2$, a contradiction. If $\bar{n} = 0$, then $n = 1$ and $|m^*| > 1$ would imply that $\alpha_0$ is diskbusting, also a contradiction.  Hence $\max \{|m_1|,...,|m_r|\} \leq n$.  This concludes the proof of Claim 1.

\vspace{10pt}

Let $\beta$ be an arbitrary meridian of $H_2$.  Isotope $\beta$ to be in efficient position with respect to both $(\mcP_1,\gamma,N)$ and $(\mcP_2,\gamma,N)$, intersects $\alpha_{\bar{n}}$ efficiently, and let $b_1,b_2,...,b_s$ denote the twisting numbers of the components of $\beta \cap N$ with respect to $\mcP_1$.

\vspace{10pt}

\noindent \textit{Claim 2:} $\max \{|b_1|,...,|b_s|\} \leq n+2$.

\vspace{10pt}

\noindent \textit{Proof of Claim 2:} Lemma \ref{lem:compdiskwave} implies that any curve that bounds a disk in $H_2$ is either parallel to a component of $\mcP_2$ or contains a wave with respect to $\mcP_2$.  This implies that any meridian of $H_2$ that is in efficient position with respect to $(\mcP_2,\gamma,N)$ contains a component in $N$ that has twisting number 0 with respect to $\mcP_2$.  Therefore, as $\alpha_{\bar{n}}$ and $\beta$ are both meridians of $H_2$, there exists a component of $\alpha_{\bar{n}} \cap N$ and a component of $\beta \cap N$ that are disjoint in $N$.  So by Lemma \ref{lem:disjointcircling}, their twisting numbers must differ by at most 1.  Moreover, as the components of $\beta \cap N$ are all disjoint, $b_1,b_2,...,b_s$ must all be within 1 of each other.  It then follows from Claim 1 that $\max \{|b_1|,...,|b_s|\} \leq n+2$. This concludes the proof of Claim 2.

\vspace{10pt}

Claim 2 implies that the lower bound on the twisting number of any component of $\beta \cap N$ is $-(n+2)$.  So by Corollary \ref{cor:winding}, for $k \geq 2$, $tw(\tau^{k+n+2}_{\gamma}(\beta),\gamma) \geq k$.  As $\beta$ was an arbitrary meridian of $H_2$, this implies that every element $\beta' \in K_{\tau^{k+n+2}_{\gamma}(\mcP_2)}$ satisfies $tw(\beta',\gamma) \geq k$.

Suppose, for the sake of contradiction, that $\zeta_0, \zeta_1, ..., \zeta_t$ is a sequence of simple closed curves such that consecutive curves are disjoint, $\zeta_0 \in K_{\mcP_1}$, and $\zeta_t \in K_{\tau^k_\gamma(\mcP_2)}$ such that $t < d_1 + d_2 - 2$ and $t < k$.  Then $t < d_1 + d_2 - 2$ implies that $\zeta_{\ell}$ intersects $\gamma$ for each $0 \leq \ell \leq t$.  As $\zeta_t$ is an element of $K_{\tau^k_\gamma(\mcP_2)}$, we have that $tw(\zeta_t,\gamma) \geq k$.  So Lemma \ref{lem:twistchain} and $t < k$ implies that $tw(\zeta_1,\gamma) \geq 2$ and therefore $\zeta_1$ is diskbusting on $H_1$, which is a contradiction. Hence, $d(K_{\mcP_1}, K_{\tau^{k+n+2}_{\gamma}(\mcP_2)}) \geq \min \{k,d_1+d_2-2\}$.
\end{proof}

We can then combine the above lemma with an upper bound on the distance of the resulting splitting to show that Dehn surgery on a sufficiently complicated curve in a closed 3-manifold can produce a Heegaard splitting with lower and upper bounds on its distance that differ by 2.

\begin{thm} \label{thm:nots1xs2}
Let $H_1$ and $H_2$ be handlebodies with $\partial H_1 = \partial H_2 = \Sigma$ and let $n = \max \{1,d(D(H_1),D(H_2))\}$.  Suppose $d_1,d_2$ are integers such that $d_1,d_2 \geq 2$ and $d_1 + d_2 - 2 > n$ and let $\gamma$ be a simple closed curve such that $d(D(H_1),\gamma)) = d_1$ and $d(D(H_2),\gamma)) = d_2$.  Let $\mcP_1$ and $\mcP_2$ be pants decompositions of $H_1$ and $H_2$ respectively (so $K_{\mcP_1} = D(H_1)$ and $K_{\mcP_2} = D(H_2)$) such that $\gamma$ is 1-seamed with respect to both $\mcP_1$ and $\mcP_2$.  Then for $k \geq d_1 + d_2 - 2$, 
$$d_1 + d_2 - 2 \leq d(K_{\mcP_1},K_{\tau^{k+n+2}_{\gamma}(\mcP_2)}) \leq d_1 + d_2.$$
\end{thm}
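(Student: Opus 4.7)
The lower bound is immediate from Lemma \ref{lem:arbmanifold}: since $k \geq d_1 + d_2 - 2$, that lemma directly yields $d(K_{\mcP_1}, K_{\tau^{k+n+2}_{\gamma}(\mcP_2)}) \geq d_1 + d_2 - 2$. So the real content of this theorem is the upper bound, which I plan to prove by constructing an explicit path of length at most $d_1 + d_2$ in $C(\Sigma)$ between the two disk sets, in the spirit of Lemma \ref{lem:upperbound}.

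The construction proceeds as follows. Since $d(D(H_1),\gamma) = d_1$, choose a geodesic $\alpha_0, \alpha_1, \ldots, \alpha_{d_1}$ in $C(\Sigma)$ with $\alpha_0 \in K_{\mcP_1}$, $\alpha_{d_1} = \gamma$, and consecutive curves disjoint. Similarly, since $d(D(H_2),\gamma) = d_2$, choose a geodesic $\beta_0, \beta_1, \ldots, \beta_{d_2}$ with $\beta_0 = \gamma$, $\beta_{d_2} \in K_{\mcP_2}$, and consecutive curves disjoint. The key observation is that $\beta_1$ is disjoint from $\gamma$, so $\tau^{k+n+2}_\gamma$ fixes $\beta_1$ up to isotopy; and $\gamma$ itself is fixed by the twist.

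I then propose the path
$$\alpha_0,\, \alpha_1,\, \ldots,\, \alpha_{d_1-1},\, \gamma,\, \beta_1,\, \tau^{k+n+2}_{\gamma}(\beta_2),\, \tau^{k+n+2}_{\gamma}(\beta_3),\, \ldots,\, \tau^{k+n+2}_{\gamma}(\beta_{d_2}).$$
Consecutive disjointness needs to be verified at three places: along the first segment (inherited from the $\alpha$-path); at the transition $\gamma$ to $\beta_1$ (inherited from the $\beta$-path, and $\beta_1 = \tau^{k+n+2}_\gamma(\beta_1)$ since $\beta_1 \cap \gamma = \emptyset$); at the transition $\beta_1$ to $\tau^{k+n+2}_\gamma(\beta_2)$ (since $\beta_1 \cap \beta_2 = \emptyset$ and applying the homeomorphism $\tau^{k+n+2}_\gamma$ preserves disjointness, using $\tau^{k+n+2}_\gamma(\beta_1) = \beta_1$); and along the remainder (the homeomorphism $\tau^{k+n+2}_\gamma$ carries the disjoint pairs $\beta_j, \beta_{j+1}$ to disjoint pairs). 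Finally, $\tau^{k+n+2}_\gamma(\beta_{d_2}) \in K_{\tau^{k+n+2}_\gamma(\mcP_2)}$. Counting edges gives $d_1$ edges from $\alpha_0$ to $\gamma$ and $d_2$ edges from $\gamma$ to the final vertex, so the path has length exactly $d_1 + d_2$, establishing the upper bound.

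There is no substantive obstacle here; the argument is essentially a two-sided version of Lemma \ref{lem:upperbound}, and the only thing to be careful about is that the twist must be applied consistently to the entire second half of the path so that the endpoint lies in $K_{\tau^{k+n+2}_\gamma(\mcP_2)}$, while the ``pivot'' curves $\gamma$ and $\beta_1$ are genuinely fixed by the twist and thus glue the two halves together without cost.
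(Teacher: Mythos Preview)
Your proposal is correct and follows essentially the same approach as the paper. The paper handles the upper bound in one line via the triangle inequality, noting that $d(\gamma,K_{\mcP_2}) = d_2$ implies $d(\gamma,K_{\tau^{k+n+2}_{\gamma}(\mcP_2)}) = d_2$ (since $\tau_\gamma$ fixes $\gamma$ and acts as an isometry on $C(\Sigma)$), whence $d(K_{\mcP_1},K_{\tau^{k+n+2}_{\gamma}(\mcP_2)}) \leq d(K_{\mcP_1},\gamma) + d(\gamma,K_{\tau^{k+n+2}_{\gamma}(\mcP_2)}) = d_1 + d_2$; your explicit path is just this triangle inequality written out at the level of vertices.
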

\begin{proof}
The lower bound of $d_1+d_2-2$ is provided by Lemma \ref{lem:arbmanifold}.  As $d(\gamma,K_{\mcP_2}) = d_2$ implies that $d(\gamma,K_{\tau^{k+n+2}_{\gamma}(\mcP_2)}) = d_2$, we have
$$ d(K_{\mcP_1},K_{\tau^{k+n+2}_{\gamma}(\mcP_2)}) \leq d(K_{\mcP_1},\gamma) + d(\gamma,K_{\tau^{k+n+2}_{\gamma}(\mcP_2)}) \leq d_1 + d_2$$
by the triangle inequality.
\end{proof}

\section{Improving Evans' Result} \label{sec:evans}

We can also use Lemma \ref{lem:pantswinding} to simplify and improve a result by Evans \cite{Ev} that generates examples of high distance Heegaard splittings, which is presented below as Theorem \ref{thm:evans}.  Evans' approach uses an iterative process that carefully constructs a set of curves in the surface $\Sigma$ such that Dehn twisting twice about each of them produces a Heegaard splitting of high distance. This is in contrast to Lemma \ref{lem:singlecurve}, which only requires a single curve of high distance, but then needs a large number of Dehn twists to obtain a high distance splitting.

Note that the requirement for a ``$\gamma_s$-stack to have height at least 2'' in Evans' theorem is a similar requirement to $\gamma_s$ being 3-seamed with respect to $\mcP$.  We refer the reader to the work of Evans \cite{Ev} or Hempel \cite{He} for an introduction to stacks.  

\begin{thm}\label{thm:evans}
\textnormal{(\cite{Ev}, Theorem 4.4).} Let $X$ be a standard set of meridians of $\Sigma$.  Suppose $\gamma_s$ is a simple closed curve such that $\Gamma_X(\gamma_s)$ is 2-connected and each $\gamma_s$-stack has height at least 2.  For $n \geq 1$, let

\vspace{-10pt}

\begin{eqnarray*}
Y^1 = \tau^2_{\gamma_s}(X) && = \{y^1_1,... ,y^1_{g}\},\\
Y^2 = \tau^2_{Y^1}(X) && = \{y^2_1,...,y^2_{g}\},\\
Y^3 = \tau^2_{Y^2}(X) && = \{y^3_1,...,y^3_{g}\},\\
\vdots\\
Y^n = \tau^2_{Y^{n-1}}(X) && = \{y^n_1,...,y^n_{g}\}.
\end{eqnarray*}

Then $d(K_X,K_{Y^n}) \geq n$.
\end{thm}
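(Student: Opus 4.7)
The plan is to induct on $n$, maintaining two invariants throughout:
\begin{itemize}
\item[(a)] every $y^n_i \in Y^n$ is 1-seamed with respect to a fixed pants decomposition $\mcP$ of $V_X$ containing $X$, chosen via Theorem \ref{thm:Starr1} and the remark after Theorem \ref{thm:Starr2} so that $\gamma_s$ is 1-seamed with respect to $\mcP$;
\item[(b)] $d(K_X,K_{Y^n}) \geq n$.
\end{itemize}

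Invariant (a) will propagate cleanly by Lemma \ref{lem:pantswinding}(b): given (a) at level $n-1$, apply the lemma with $\gamma = x_i \in X \subset \mcP$ (so $x_i$ bounds a compressing disk in $V_X$) and $Y = Y^{n-1}$ to obtain $tw(y^n_i, y^{n-1}_j) \geq 2$ for every $y^{n-1}_j \in Y^{n-1}$; then the remark preceding Lemma \ref{lem:pantswinding} shows $y^n_i$ is 1-seamed with respect to $\mcP$. The base case of (a) is the same argument with $Y = \{\gamma_s\}$.

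For invariant (b), the base case $n=1$ is immediate: any $\alpha \in K_{Y^1}$ equals $\tau^2_{\gamma_s}(\beta)$ for some $\beta \in K_X$, and by Lemma \ref{lem:pantswinding}(b) it is 1-seamed with respect to $\mcP$, hence diskbusting on $V_X$ by Theorem \ref{thm:Starr2}, hence not in $K_X$. For the inductive step, I would argue by contradiction: assume a path $\alpha_0,\ldots,\alpha_{n-1}$ in $C(\Sigma)$ with $\alpha_0 \in K_X$ and $\alpha_{n-1} \in K_{Y^n}$. Since $g \geq 2$, the collection $Y^{n-1}$ consists of at least two pairwise disjoint 1-seamed curves and is therefore collectively 2-seamed, so Lemma \ref{lem:pantswinding}(c) yields the strict bound $tw(\alpha_{n-1}, y^{n-1}_j) \geq 3$ for every $y^{n-1}_j$. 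One then splits into cases based on whether each intermediate $\alpha_i$ intersects every component of $Y^{n-1}$: when $\alpha_i$ is disjoint from some $y^{n-1}_j$, the triangle inequality combined with the inductive bound $d(K_X, y^{n-1}_j) \geq n-1$ (valid since $y^{n-1}_j \in K_{Y^{n-1}}$) forces $i$ to be near $n-1$; when every $\alpha_i$ intersects every $y^{n-1}_j$, Lemma \ref{lem:twistchain} propagates the twisting bound down the path to force some intermediate $\alpha_i$ to be 1-seamed with respect to $\mcP$, contradicting disjointness with a neighbor in $K_X$.

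The main obstacle is closing this induction cleanly: a constant starting bound of $3$ at $\alpha_{n-1}$ is lost one unit at a time by Lemma \ref{lem:twistchain}, so for large $n$ the propagation fails to reach the 1-seamed threshold at $\alpha_1$. I expect the resolution to be amplifying the starting twisting bound by exploiting the iterated structure of the construction: since $y^{n-1}_j = \tau^2_{Y^{n-2}}(x_{i'})$, the same invocation of Lemma \ref{lem:pantswinding}(c) at the previous level gives $tw(y^{n-1}_j, y^{n-2}_k) \geq 3$, and this can be compounded level by level to produce a lower bound on $tw(\alpha_{n-1}, y^{m}_k)$ that grows with $n-m$. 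Feeding a suitably enhanced starting bound into Lemma \ref{lem:twistchain} then forces a 1-seamed intermediate curve adjacent to $\alpha_0 \in K_X$, giving the desired contradiction with the diskbusting consequence of Theorem \ref{thm:Starr2}.
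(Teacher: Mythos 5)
First, note that the paper does not prove this statement directly --- it is quoted from Evans and the paper instead proves the strengthened Theorem \ref{thm:main} (fewer hypotheses, bound $n+1$), whose argument is the right comparison point. Your invariant (a) and the cases $n=1,2$ are fine and match the paper (Lemma \ref{lem:seamed}, Corollary \ref{cor:dist3}). The genuine gap is exactly at the point you flag as ``the main obstacle,'' and your proposed resolution does not close it. The amplification claim --- that $tw(\alpha_{n-1},y^m_k)$ grows with $n-m$ --- is not a consequence of any cited lemma; iterating the mechanism that underlies Lemma \ref{lem:disjointwinding} only transfers a \emph{constant} bound ($tw>2$) from one level of the tower to the next, and you give no argument for compounding. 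More seriously, even granting a large starting bound, Lemma \ref{lem:twistchain} about a fixed curve $y^m_k$ requires \emph{every} intermediate $\alpha_i$ to intersect $y^m_k$, and your inductive hypothesis is too weak to supply this: $d(K_X,K_{Y^{n-1}})\geq n-1$ gives $d(K_X,y^{n-1}_j)\geq n-1$, so your triangle-inequality case only forces $i\geq n-2$, which already fails to rule out $\alpha_{n-2}$ being disjoint from all of $Y^{n-1}$; and for a low-level curve such as $y^1_k$ one only knows $d(K_X,y^1_k)\geq 2$, which permits $\alpha_1$ or $\alpha_2$ to be disjoint from it, breaking the chain.

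The paper's resolution is structurally different on both counts. Instead of propagating a twisting bound about a single curve down the whole path (losing $1$ per step), it keeps the bound constant but \emph{changes the reference curve}, descending one level of the tower per step of the path: Lemma \ref{lem:dist3} gives $tw(\alpha_{m-1},y^{n-1}_*)>1$, Lemma \ref{lem:disjointwinding} shows that a curve with $tw>1$ about $y'_m=\tau^2_Y(x_m)$ essentially contains a parallel copy of $y'_m$ and therefore has $tw>2$ about the previous level's curve, so its disjoint neighbor has $tw>1$ there by Lemma \ref{lem:disjointcircling}; iterating lands $tw(\alpha_1,\cdot)>1$ about a $1$-seamed curve, contradicting $d(K_X,\alpha_1)=1$ via Theorem \ref{thm:Starr2}. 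To guarantee that each $\alpha_{i-1}$ actually meets the next level's curve, the paper uses the ``nearly fills'' property (Proposition \ref{prop:nearlyfills}, Lemma \ref{lem:nearlyfills}, Lemma \ref{lem:doublestar}), which requires the tower curves to be distance $\geq 3$ from $K_X$ --- and this is precisely why the induction is run with the strengthened bound $n+1$ rather than $n$: the weaker Evans bound you are inducting on does not make the intersection pattern available. If you want to salvage your outline, you should strengthen your invariant (b) to $d(K_X,K_{Y^n})\geq n+1$ and add ``each $y^n_i$ nearly fills $V_X$'' as a third invariant.
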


We make the following improvements to Evans' result:  
\begin{itemize}
\item we eliminate the requirement for every $\gamma_s$-stack to have height at least 2,
\item except for the one iteration to obtain $Y^2$, we can restrict to Dehn twisting about a single curve, and
\item $d(K_X,K_{Y^n}) \geq (n+1)$ rather than $n$.
\end{itemize}

The full statement of the theorem then becomes the following.

\begin{thm}\label{thm:main}
Let $X$ be a standard set of meridians of $\Sigma$ and $\gamma_s$ a simple closed curve such that $\Gamma_X(\gamma_s)$ is 2-connected.  For $n \geq 1$ let

\vspace{-10pt}

\begin{eqnarray*}
Y^1 = \tau^2_{\gamma_s}(X) && = \{y^1_1,... ,y^1_{g}\},\\
Y^2 = \tau^2_{Y^1}(X) && = \{y^2_1,...,y^2_{g}\},\\
Y^3 = \tau^2_{y^2_{*}}(X) && = \{y^3_1,...,y^3_{g}\} \text{ for any }y^2_* \in Y^2,\\
\vdots \\
Y^n = \tau^2_{y^{n-1}_*}(X) && = \{y^n_1,...,y^n_{g}\} \text{ for any }y^{n-1}_* \in Y^{n-1}.
\end{eqnarray*}

Then $d(K_X,K_{Y^n}) \geq (n+1)$.
\end{thm}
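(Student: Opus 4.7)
The plan is to prove Theorem~\ref{thm:main} by induction on $n$, with a strengthened inductive claim $P(n)$ consisting of two parts: (i) $d(K_X, K_{Y^n}) \geq n+1$, and (ii) each $y^n_i \in Y^n$ is 1-seamed with respect to a fixed pants decomposition $\mcP$ if $n=1$, and 2-seamed with respect to $\mcP$ if $n \geq 2$. The pants decomposition $\mcP$ is chosen at the outset using Theorems~\ref{thm:Starr1} and~\ref{thm:Starr2} applied to the 2-connected graph $\Gamma_X(\gamma_s)$: this produces a $\mcP$ extending $X$ such that $\gamma_s$ is 1-seamed, and since $X \subset \mcP$ we have $K_X = K_\mcP$. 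The base case $n=1$ is a direct application of Theorem~\ref{thm:cg} (Casson-Gordon) with $\gamma = \gamma_s$ and $k=2$ to obtain part (i), combined with Lemma~\ref{lem:pantswinding}(b) applied to $x_i \in K_X$ to get $tw(y^1_i, \gamma_s) \geq 2$ and hence the 1-seamedness in (ii) via the remark on twisting number and $k$-seamedness.

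For the inductive step, part (ii) for $n$ follows from Lemma~\ref{lem:pantswinding}(b) together with (c): if $n=2$ then $Y^1$ is collectively 2-seamed because it has $g \geq 2$ components each 1-seamed by $P(1)$, and if $n \geq 3$ then $\{y^{n-1}_*\}$ is collectively 2-seamed because $y^{n-1}_*$ itself is 2-seamed by $P(n-1)$; in either case the strict inequality $tw(y^n_i, w) > 2$ yields that $y^n_i$ is 2-seamed with respect to $\mcP$. For part (i), I argue by contradiction: suppose a path $\alpha_0, \ldots, \alpha_\ell$ exists with $\ell \leq n$, $\alpha_0 \in K_X$, and $\alpha_\ell \in K_{Y^n}$. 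Writing $\alpha_\ell = \tau^2_W(\beta)$ with $\beta \in K_X$ and $W = Y^1$ for $n=2$ or $W = \{y^{n-1}_*\}$ for $n \geq 3$, Lemma~\ref{lem:pantswinding}(b)+(c) yields $tw(\alpha_\ell, w) > 2$ for every $w \in W$.

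For $n=2$, whenever $\alpha_{\ell-1}$ meets some $y^1_i \in Y^1$, Lemma~\ref{lem:disjointcircling} forces $tw(\alpha_{\ell-1}, y^1_i) > 1$, so $\alpha_{\ell-1}$ is 1-seamed and therefore diskbusting on $V_X$, contradicting $d(K_X, \alpha_{\ell-1}) \leq \ell - 1 \leq 1$; hence $\alpha_{\ell-1}$ is disjoint from the entire cut system $Y^1$ and lies in $K_{Y^1}$, giving a sub-path of length at most $1$ from $K_X$ to $K_{Y^1}$ that contradicts $P(1)$. For $n \geq 3$, two observations work in tandem: whenever the full path intersects $y^{n-1}_*$ at every step, Lemma~\ref{lem:twistchain} propagates the strict inequality $tw(\alpha_\ell, y^{n-1}_*) > 2$ backwards and eventually forces $\alpha_{\ell-1}$ to be 1-seamed, setting up the diskbusting contradiction; conversely, any $\alpha_j$ disjoint from $y^{n-1}_*$ produces a sub-path $\alpha_0, \ldots, \alpha_j, y^{n-1}_*$ of length $j+1$ from $K_X$ to $y^{n-1}_* \in K_{Y^{n-1}}$, and $P(n-1)$ forces $j+1 \geq n$.

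The main obstacle is combining these two observations for $n \geq 3$ to rule out every configuration of a length-$\leq n$ path and produce a contradiction with $P(n-1)$. Unlike the $n=2$ case, where collective 2-seamedness of the full cut system $Y^1$ directly forces disjointness from an entire cut system and hence the placement $\alpha_{\ell-1} \in K_{Y^{n-1}}$, the single-curve twist for $n \geq 3$ only yields constraints about $y^{n-1}_*$ itself, so the argument must carefully track the twisting numbers along the path together with the inductive distance bound in order to extract a short enough sub-path ending in $K_{Y^{n-1}}$. I expect this careful bookkeeping to be the technical heart of the proof.
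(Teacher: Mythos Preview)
Your setup, base case, and $n=2$ argument are fine, and your inductive claim (ii) is correctly handled by Lemma~\ref{lem:pantswinding}(b)+(c). The genuine gap is in (i) for $n\geq 3$: the two observations you isolate cannot be combined into a proof. Concretely, take $\ell=n$ and suppose every $\alpha_j$ meets $y^{n-1}_*$. Lemma~\ref{lem:twistchain} only gives $tw(\alpha_1,y^{n-1}_*)>3-n$, which is vacuous for $n\geq 3$; the fact that $\alpha_{\ell-1}$ is 1-seamed is no contradiction since $d(K_X,\alpha_{\ell-1})\leq n-1$ is all you know. In the complementary case where $\alpha_{n-1}$ is disjoint from $y^{n-1}_*$, your sub-path $\alpha_0,\dots,\alpha_{n-1},y^{n-1}_*$ has length exactly $n$, which matches the bound from $P(n-1)$ rather than violating it. So no amount of bookkeeping with only $y^{n-1}_*$ and Lemma~\ref{lem:twistchain} closes the argument: tracking twisting about a \emph{single} curve loses one unit of twisting per step along the path, and you have up to $n-1$ steps to traverse but only a margin of $2$.

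The idea you are missing is to descend through the whole tower $y^{n-1}_*, y^{n-2}_*,\dots$ in parallel with the path. The paper proves two ``step-down'' lemmas: Lemma~\ref{lem:doublestar} says that if $tw(\beta',y'_m)>1$ with $y'_m=\tau^2_Y(x_m)$ and $Y$ nearly fills $V_\mcP$, then any curve disjoint from $\beta'$ must hit some $y_j\in Y$; Lemma~\ref{lem:disjointwinding} then upgrades this to $tw(\cdot,y_j)>1$. Applied iteratively, these convert $tw(\alpha_{m-1},y^{n-1}_*)>1$ into $tw(\alpha_{m-2},y^{n-2}_*)>1$, then $tw(\alpha_{m-3},y^{n-3}_*)>1$, and so on down to $tw(\alpha_1,y^{n-m+1}_*)>1$, which finally makes $\alpha_1$ 1-seamed and gives the contradiction. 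The inductive hypothesis is used not to produce a short sub-path to $K_{Y^{n-1}}$, but to guarantee (via Proposition~\ref{prop:nearlyfills}) that each $y^i_*$ for $i\geq 2$ nearly fills $V_X$, which is the hypothesis needed for Lemma~\ref{lem:doublestar} at each step.
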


Recall that Starr \cite{Starr} showed that if $\gamma_s$ is diskbusting on a handlebody, then there exists a standard set of meridians $X$ for the handlebody such that $\Gamma_X(\gamma_s)$ is 2-connected.  Therefore it suffices for $\gamma_s$ to be any diskbusting curve.  Moreover, Starr showed that if $\Gamma_X(\gamma_s)$ is 2-connected then we can extend $X$ to a pants decomposition $\mcP$ of $\Sigma$ such that $X \subset \mcP$ and $\gamma_s$ is 1-seamed with respect to $\mcP$.  Since $V_X \cong V_{\mcP}$, we will often use this $\mcP$ instead of $X$ in the following arguments if a pants decomposition is more useful.

For example, we will use this pants decomposition $\mcP$ in the following lemma, which is a simplified version of a lemma given in \cite{BTY} (see Lemma 3.5).  This ensures that for $i \geq 1$, each curve of $Y^i$ in Theorem \ref{thm:main} is 2-seamed with respect to $\mcP$.

\begin{lem} \label{lem:seamed}
Let $\mcP$ be a pants decomposition of $\Sigma$.  Then if $\rho$ is an element of $\mcP$ and $\gamma$ is an $m$-seamed curve with respect to $\mcP$, then $\tau^k_{\gamma}(\rho)$ is $2km^2$-seamed with respect to $\mcP$.
\end{lem}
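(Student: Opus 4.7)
The plan is to directly count, for each pants $P$ of $\mcP$ and each seam type $\alpha\beta$ in $P$, the number of seams of $\tau^k_\gamma(\rho) \cap P$ of that type, and show this count is at least $2km^2$.

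Let $N$ be an annular neighborhood of $\gamma$ fibered with respect to $\mcP$, and set $r = |\rho \cap \gamma|$. First I would establish that $r \geq 4m$: since $\rho$ is a boundary curve of at most two pants of $\mcP$ (counted with multiplicity if $\rho$ appears twice in some $\partial P$), and the $m$-seamed $\gamma$ contains at least $m$ seams of each of the two types involving $\rho$ in each adjacent pants (contributing $2m$ endpoints on $\rho$ per side), summing over the two sides of $\rho$ gives $r \geq 4m$. Then by Lemma \ref{lem:LMtwist}, each of the $r$ components of $\rho \cap N$ (which has twisting number $0$ since $\rho$ crosses $\gamma$ transversely) is sent by $\tau^k_\gamma$ to a component of $\tau^k_\gamma(\rho) \cap N$ with twisting number $k$.

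Now fix a pants $P$ and a seam type $\alpha\beta$ in $P$. Since $\gamma$ is 1-seamed it contains no waves, so the arcs of $\gamma \cap P$ are all seams, and the strips of $N \cap P$ are in bijection with these arcs; in particular there are at least $m$ strips of type $\alpha\beta$. For each such strip $\sigma$, each of the $r$ twisting-$k$ arcs of $\tau^k_\gamma(\rho) \cap N$ visits $\sigma$ essentially $k$ times (once per wind). Every visit whose endpoints lie on the two $\partial P$-sides of $\sigma$ yields an arc of $\tau^k_\gamma(\rho) \cap P$ from $\alpha$ to $\beta$, i.e., a seam of type $\alpha\beta$; a visit whose endpoints lay on the same $\partial P$-side would bound a bigon inside the strip, violating the efficient position of $\tau^k_\gamma(\rho)$. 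The only non-complete visits occur when an arc of $\tau^k_\gamma(\rho) \cap N$ has a $\partial N$-endpoint at a corner of $\sigma$, and such endpoints appear precisely at $\rho \cap \gamma$ crossings, contributing at most $2$ partial visits per strip.

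Thus each strip of type $\alpha\beta$ contributes at least $rk - 2$ complete $\alpha\beta$-seams, and summing over the $\geq m$ strips of that type in $P$ yields at least
\[
m(rk - 2) \;\geq\; m(4mk - 2) \;=\; 4km^2 - 2m \;\geq\; 2km^2
\]
seams of type $\alpha\beta$ in $\tau^k_\gamma(\rho) \cap P$, where the last inequality holds whenever $k, m \geq 1$. Since $P$ and the seam type were arbitrary, $\tau^k_\gamma(\rho)$ is $2km^2$-seamed. The main obstacle in executing the plan is the bookkeeping for the partial visits, especially the edge cases where $\rho$ appears twice in $\partial P$ (so strips of type $\rho\rho$ have both $I$-fibers on $\rho$ and thus both associated arcs contribute partial visits); in each such case the loss remains a bounded constant, comfortably absorbed by the $-2m$ slack in the estimate.
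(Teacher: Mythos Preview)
Your claim that $r = |\gamma \cap \rho| \geq 4m$ is incorrect, and this is the step on which the rest of your estimate hinges. Each point of $\gamma \cap \rho$ is simultaneously an endpoint of an arc of $\gamma$ in the pair of pants on one side of $\rho$ and an endpoint of an arc on the other side; the ``$2m$ endpoints per side'' you count are the \emph{same} set of $r$ points viewed from each side, so you cannot add them. The correct bound is $r \geq 2m$, which is exactly what the paper uses (``$\gamma$ being $m$-seamed implies $\gamma$ intersects $\rho$ at least $2m$ times''). With $r \geq 2m$ your final chain becomes
\[
m(rk-2) \;\geq\; m(2mk - 2) \;=\; 2km^2 - 2m,
\]
which falls $2m$ short of the target.

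The paper organizes the count per arc rather than per strip: each of the $\geq 2m$ components of $\tau^k_\gamma(\rho)\cap N$ has twisting number $k$, hence winds fully around $N$ a total of $k$ times; since $\gamma$ carries at least $m$ strips of every seam type, each component picks up at least $km$ seams of that type, giving $\geq 2m\cdot km = 2km^2$. The paper simply does not track the endpoint losses you are worried about.

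If you want to make your per-strip count go through rigorously with the correct $r \geq 2m$, note where the partial visits actually land. Both endpoints of a twisted arc lie at the same height in $N$, so both partial visits of that arc occur in the single strip containing the pushed-off copy of the corresponding $\rho$-fiber. Thus all partial visits lie in one of the two pairs of pants adjacent to $\rho$, namely the one into which you pushed $\rho$ off of $\mcP$. For seam types in any other pair of pants there are no partial visits whatsoever, and your count already gives $\geq m\cdot rk \geq 2km^2$. For seam types in the remaining pair of pants, take the push-off of $\rho$ to the other side and repeat; the seam count is an isotopy invariant, so this is legitimate.
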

\begin{proof}
As shown in Lemma \ref{lem:winding}, an embedding of $\tau^k_{\gamma}(\rho)$ can be obtained by taking an annular neighborhood $N$ of $\gamma$ that is fibered with respect to $\mcP$ and replacing subarcs of $\rho \cap N$ with arcs that have twisting number $k$ with respect to $\mcP$.  Each component of $\tau^k_{\gamma}(\rho) \cap N$ will then contain at least $km$ seams.  As $\gamma$ being $m$-seamed implies $\gamma$ intersects $\rho$ at least $2m$ times, $\tau^k_{\gamma}(\rho)$ is then $2km^2$-seamed with respect to $\mcP$.
\end{proof}

We now want to prove Theorem \ref{thm:main} for the cases when $n=1$ and $n=2$.  The case when $n=1$ is equivalent to the statement of Lemma \ref{lem:singlecurve} for $d=2$ and $k=2$.  As mentioned in the previous section, this result was first shown by Casson and Gordon in \cite{CG}.  So we begin by proving the case when $n=2$.

\begin{lem} \label{lem:nearlyfills}
Let $X$ be a standard set of meridians of $\Sigma$.  Suppose $\gamma$ is a simple closed curve such that $\Gamma_X(\gamma)$ is 2-connected.  For $k \geq 2$, $\tau_{\gamma}^k(X)$ nearly fills $V_X$.
\end{lem}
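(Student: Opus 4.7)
The plan is to apply Proposition \ref{prop:nearlyfills}(1): it suffices to show that every essential simple closed curve $\beta$ disjoint from $Y := \tau_{\gamma}^k(X)$ satisfies $d(D(V_X),\beta) \geq 2$, which by Theorem \ref{thm:Starr2} will follow once $\beta$ is exhibited as $1$-seamed with respect to some pants decomposition of $V_X$. Since $\Gamma_X(\gamma)$ is $2$-connected, Theorem \ref{thm:Starr1} makes $\gamma$ diskbusting on $V_X$, and the proof of Theorem \ref{thm:Starr2} extends $X$ to a pants decomposition $\mcP$ of $V_X$ with respect to which $\gamma$ is $1$-seamed. First I would fix such a $\mcP$, an annular neighborhood $N$ of $\gamma$ fibered with respect to $\mcP$, and set $n := |N \cap X|$ and $n' := |N \cap \mcP|$; note $n \geq g \geq 2$ because $\gamma$ crosses every curve of $\mcP \supseteq X$.

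Given such a $\beta$, I split into two cases. First, suppose $\beta \cap \gamma = \emptyset$; then $\tau_{\gamma}^k$ fixes $\beta$ up to isotopy, so $\beta$ is disjoint from $X$. Any $\beta$ parallel to some $x_i \in X$ would meet $\gamma$ (which crosses every curve of $\mcP$), so $\beta$ must be essential in the $2g$-punctured sphere $\Sigma - X$ and separate its punctures into two nonempty subsets. Every arc of $\gamma$ in $\Sigma - X$ is disjoint from $\beta$ and thus trapped on one side, so $\Gamma_X(\gamma)$ has no edges between the two vertex subsets and is disconnected, contradicting $2$-connectedness.

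The main case is when $\beta$ meets $\gamma$. I would isotope $\beta$ into efficient position with respect to $(\mcP,\gamma,N)$ admitting no outer triangles. Each arc of $X \cap N$ is an $I$-fiber of $N$ of twisting number $0$, so Lemma \ref{lem:LMtwist} gives that every arc of $Y \cap N$ has twisting number exactly $k$ with respect to $\mcP$. For any component $c$ of $\beta \cap N$ with twisting number $m$, disjointness of $c$ from each arc of $Y \cap N$ and Lemma \ref{lem:disjointcircling} give $m \in [k-1,k+1]$. The hard part will be to sharpen this to the strict inequality $m > k-1$, since the bound $m \geq k-1$ alone is insufficient, in the borderline case $k = 2$, to invoke the remark preceding Lemma \ref{lem:pantswinding} to conclude $1$-seamedness. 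To get strictness, I pass to the universal cover $B = [0,1] \times \mathbb{R}$ of $N$: lifts of $\mcP \cap N$ are the horizontal lines $y = z/n'$ ($z \in \mathbb{Z}$), lifts of $Y \cap N$ are slope-$k$ segments from $(0, z_i/n')$ to $(1, z_i/n' + k)$ over the $n$ $X$-heights $z_i \in \{0,\ldots,n'-1\}$, and a lift of $c$ runs from $(0,a)$ to $(1, a+m)$ with $a \notin \{z/n' : z \in \mathbb{Z}\}$. A direct linear calculation shows that a slope-$(k-1)$ lift of $c$ avoids every slope-$k$ lift precisely when $a \notin \bigcup_i \bigcup_{j \in \mathbb{Z}} (z_i/n' + j,\, z_i/n' + j + 1)$. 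Because $n \geq 2$, at least two classes $z_i/n'$ are distinct modulo $1$, so $\bigcap_i (z_i/n' + \mathbb{Z}) = \emptyset$; hence this union is all of $\mathbb{R}$ and no admissible $a$ exists. The resulting contradiction gives $m > k-1$, so $tw(\beta,\gamma) > k - 1 \geq 1$, and the remark then yields that $\beta$ is $1$-seamed with respect to $\mcP$.

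In both cases $\beta$ is $1$-seamed with respect to $\mcP$, so Theorem \ref{thm:Starr2} makes $\beta$ diskbusting on $V_X$; hence $d(D(V_X),\beta) \geq 2$, and Proposition \ref{prop:nearlyfills}(1) concludes that $\tau_{\gamma}^k(X)$ nearly fills $V_X$. The principal obstacle is the strict inequality $m > k-1$: Lemma \ref{lem:disjointcircling} on its own only gives $m \geq k-1$, leaving the boundary case $m = k-1$ undecided, and the universal-cover calculation closes this gap by exploiting the density of slope-$k$ lifts provided by the presence of $n \geq 2$ distinct $X$-fibers in $N$.
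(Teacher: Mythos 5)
Your proof is correct, but the central computation takes a genuinely different route from the paper's. The paper observes that any curve disjoint from $\tau^k_{\gamma}(X)$ has the form $\tau^k_{\gamma}(\beta)$ for some $\beta$ disjoint from $X$; such a $\beta$ bounds a disk in $V_X$, so Lemma \ref{lem:pantswinding}(b) immediately gives $tw(\tau^k_{\gamma}(\beta),\gamma) \geq k \geq 2$, and the remark preceding Lemma \ref{lem:pantswinding} finishes. That pullback absorbs both of your cases at once (a meridian of $V_X$ must meet the diskbusting curve $\gamma$, so your first case never needs separate treatment, though your Whitehead-graph argument for it is fine), and because the bound is $\geq k$ rather than $\geq k-1$ it never meets the borderline problem you have to work around when $k=2$. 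Your forward argument --- comparing the arcs of $\beta \cap N$ directly with the arcs of $\tau^k_{\gamma}(X) \cap N$, which have twisting number exactly $k$ --- is the same mechanism the paper deploys later in Claim 2 of Lemma \ref{lem:arbmanifold} and in Lemma \ref{lem:disjointwinding}, and it is sound here. Your universal-cover refinement is also correct, and in fact proves something slightly stronger than you state: two disjoint properly embedded transversal arcs in a fibered annulus have twisting numbers differing by \emph{strictly} less than $1$ (the parallelogram argument in the proof of Lemma \ref{lem:disjointcircling} already yields the strict inequality), so a single arc of $\tau^k_{\gamma}(X) \cap N$ suffices to rule out $m = k-1$; the appeal to $n \geq 2$ is not really needed, since for a fixed height $h$ the closed intervals $[h+j,\,h+j+1]$ already tile $\mathbb{R}$ and the endpoint cases are excluded by disjointness of endpoints on $\partial N$. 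The trade-off is clear: the paper's proof is two lines given Lemma \ref{lem:pantswinding}, while yours is longer but self-contained at the level of the annulus lemmas and makes explicit exactly where the strictness needed for $1$-seamedness comes from.
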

\begin{proof}
Let $\beta'$ be a simple closed curve in $\Sigma$ that is disjoint from $\tau_{\gamma}^k(X)$.  By Proposition \ref{prop:nearlyfills}, it then suffices to show that $\beta'$ must be at least distance 2 from $K_X$.

Theorem \ref{thm:Starr2} implies that we can obtain a pants decomposition $\mcP$ of $\Sigma$ such that $X \subset \mcP$ and $\gamma$ is 1-seamed with respect to $\mcP$.  As the Dehn twist operator preserves intersection number, we can let $\beta' = \tau_{\gamma}^k(\beta)$ for some curve $\beta$ that is disjoint from $X$.  Note that this implies $\beta$ bounds a disk in $V_X$.  Then by Lemma \ref{lem:pantswinding}, $tw(\beta',\gamma) \geq k$ with respect to $\mcP$.  Since $k \geq 2$ and $\gamma$ is 1-seamed, $\beta'$ is also 1-seamed with respect to $\mcP$ and therefore distance 2 from $K_X$ by Theorem \ref{thm:Starr2}.  Hence $\tau^k_{\gamma}(X)$ nearly fills $V_X$.
\end{proof}

The above lemma shows that Dehn twisting a standard set of meridians about a diskbusting curve yields a collection of curves that nearly fills.  In particular, this implies that the set $Y^1$ in Theorem \ref{thm:main} nearly fills $V_X$.

\begin{lem} \label{lem:dist3}
Let $\mcP$ be a pants decomposition of $\Sigma$.  Suppose $Y = \{y_1,y_2,...,y_t\}$ is a collection of curves in $\Sigma$ that nearly fills $V_{\mcP}$ and is collectively 2-seamed with respect to $\mcP$.  Then for $k \geq 2$, if $\beta'$ is a curve such that $d(K_{\tau^k_Y(\mcP)},\beta') = 1$, then $tw(\beta',y_j) > k-1$ for some $y_j \in Y$.
\end{lem}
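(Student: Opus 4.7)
The plan is to locate a meridian $\alpha' \in K_{\tau^k_Y(\mcP)}$ disjoint from $\beta'$, pull everything back along $\tau^{-k}_Y$ to obtain a disjoint pair $(\alpha,\beta)$ in $\Sigma$ with $\alpha \in K_{\mcP}$, use the nearly-fills hypothesis to force $\beta$ to cross some $y_j \in Y$, bound the twisting numbers of $\beta \cap N_j$ from below using Lemma \ref{lem:compdiskwave} together with Lemma \ref{lem:disjointcircling}, push forward via Corollary \ref{cor:winding}, and finally apply the claim inside the proof of Lemma \ref{lem:pantswinding}(c) to upgrade the estimate to a strict inequality.

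In detail, $d(K_{\tau^k_Y(\mcP)},\beta') = 1$ supplies a meridian $\alpha' \in K_{\tau^k_Y(\mcP)}$ disjoint from $\beta'$. Since $\tau^k_Y$ extends to a homeomorphism $V_{\mcP} \to V_{\tau^k_Y(\mcP)}$, the curve $\alpha := \tau^{-k}_Y(\alpha')$ is a meridian of $V_{\mcP}$, and $\beta := \tau^{-k}_Y(\beta')$ is an essential curve disjoint from $\alpha$. Because $Y$ nearly fills $V_{\mcP}$ and $\alpha$ is a meridian, $\alpha \cup Y$ fills $\Sigma$, so $\beta$ (essential, disjoint from $\alpha$) must meet $Y$; say $\beta \cap y_j \neq \emptyset$. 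Isotope $\alpha$ and $\beta$ into efficient position with respect to $(\mcP,Y,N)$ for a fibered annular neighborhood $N$ of $Y$, admitting no outer triangles of $N$, using Lemma \ref{lem:efficientisotopy}.

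By Lemma \ref{lem:compdiskwave} applied to the meridian $\alpha$, either $\alpha$ is parallel to a component of $\mcP$ or $\alpha$ contains waves of pants components of $\Sigma - \mcP$; in either case every arc of $\alpha \cap N$ has twisting number $0$ with respect to $\mcP$, since $\mcP$-parallel arcs through $N$ appear as $I$-fibers of $N$ and waves are disjoint from $\mcP$. Because $Y$ is collectively $2$-seamed, Theorem \ref{thm:Starr2} implies $\bigcup y_i$ is diskbusting, so $\alpha \cap Y \neq \emptyset$; combined with the filling property of $\alpha \cup Y$, this lets me select $y_j$ to be met by both $\alpha$ and $\beta$. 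Then inside $N_j$, each arc of $\beta \cap N_j$ is disjoint from an arc of $\alpha \cap N_j$ of twisting number $0$, so Lemma \ref{lem:disjointcircling} forces its twisting number into $\{-1,0,1\}$. Applying the analogous argument in every $N_i$ where $\alpha$ has an arc, and selecting a representative of $\beta$ that maximizes the twisting numbers elsewhere, all twisting numbers of $\beta \cap N$ exceed $-k$ for $k \geq 2$, so Corollary \ref{cor:winding} delivers $tw(\beta',y_j) = tw(\tau^k_Y(\beta),y_j) \geq k - 1$.

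To promote $\geq k - 1$ to $> k - 1$, I invoke the claim established inside the proof of Lemma \ref{lem:pantswinding}(c): because $Y$ is collectively $2$-seamed and $k - 1 \geq 1$, any embedding of $\beta' = \tau^k_Y(\beta)$ whose component in $N_j$ has twisting number equal to the integer $k - 1$ admits an isotopy (across a rectangular component of $\Sigma - (N \cup \mcP)$) that strictly increases that twisting number. Hence $tw(\beta',y_j) > k - 1$. The main obstacle will be arranging that a single $y_j$ is met by both $\alpha$ and $\beta$ so that Lemma \ref{lem:disjointcircling} can transfer the twisting bound; without assuming each $y_i$ is individually $1$-seamed, this step requires combining the diskbusting consequence of collective $2$-seamedness with the filling structure of $\Sigma - (\alpha \cup Y)$ coming from the nearly-fills hypothesis.
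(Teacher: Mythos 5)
Your overall skeleton (pull $\beta'$ back to $\beta=\tau^{-k}_Y(\beta')$, use the nearly-fills hypothesis to force $\beta\cap Y\neq\emptyset$, bound the twisting numbers of $\beta\cap N$ below, push forward with Corollary \ref{cor:winding}, and upgrade to a strict inequality via the claim in Lemma \ref{lem:pantswinding}(c)) matches the paper's, but the mechanism you use for the crucial lower bound on the twisting numbers of $\beta\cap N$ does not go through. The paper's proof gets this bound from a different observation that you never make: since $d(K_{\mcP},\beta)=1$, the curve $\beta$ is not diskbusting, hence by Theorem \ref{thm:Starr2} it is \emph{not 1-seamed} with respect to $\mcP$; because each $y_i$ is 1-seamed, any arc of $\beta\cap N$ with $|m|>1$ would make $\beta$ 1-seamed, so \emph{every} arc of $\beta\cap N$ has twisting number in $[-1,1]$. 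That single remark simultaneously supplies the hypothesis $k+m_j\geq 0$ of Corollary \ref{cor:winding} for all arcs in all annuli and the bound $m\geq -1$ in the annulus $N_j$ that $\beta$ meets; this is exactly Lemma \ref{lem:pantswinding}(a), with part (c) giving strictness.

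Your substitute mechanism -- transferring a zero-twisting bound from the disjoint meridian $\alpha$ via Lemma \ref{lem:disjointcircling} -- has three concrete gaps. First, Lemma \ref{lem:compdiskwave} does not say that every arc of $\alpha\cap N$ has twisting number $0$; it says $\alpha$ is $\mcP$-parallel or contains two waves, which only yields \emph{one} zero-twisting arc in each annulus the wave actually crosses (the remaining arcs of $\alpha\cap N_i$ are then only pinned to $[-1,1]$, and arcs of $\beta\cap N_i$ only to $[-2,2]$, which is too weak for the stated bound $>k-1$). Second, and more seriously, your argument only controls $\beta$'s twisting in annuli that $\alpha$ enters, and you give no argument that $\alpha$ and $\beta$ meet a common $y_j$ -- $\beta$ could meet only $y_1$ while $\alpha$'s waves cross only $y_2$; you flag this yourself but the appeal to ``the filling structure of $\Sigma-(\alpha\cup Y)$'' is not a proof. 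Third, the blanket assertion that ``all twisting numbers of $\beta\cap N$ exceed $-k$'' after ``selecting a representative that maximizes the twisting numbers elsewhere'' is unsupported: without the not-1-seamed observation (or individual 1-seamedness of the $y_i$), nothing you have written rules out an arc of $\beta$ in some $N_i$ untouched by $\alpha$ with twisting number $-5$ when $k=2$, and then Corollary \ref{cor:winding} is not applicable. Replacing this whole portion of your argument with the observation that $\beta$ is not 1-seamed, and then citing Lemma \ref{lem:pantswinding}(a) and (c), closes all three gaps at once.
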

\begin{proof}
As $d(K_{\tau^k_Y(\mcP)},\beta') = 1$, $\beta'$ is isotopic to $\tau^k_Y(\beta)$ for some curve $\beta$ that is distance 1 away from $K_{\mcP}$.  This has two implications.  The first is that $\beta$ is not diskbusting on $V_{\mcP}$ and therefore by Proposition \ref{prop:nearlyfills} must intersect some component $y_j \in Y$.  The second implication is that $\beta$ cannot be 1-seamed with respect to $\mcP$ by Theorem \ref{thm:Starr2}.  So Lemma \ref{lem:pantswinding} parts (a) and (c) together imply that $tw(\beta',y_j) > k-1$.
\end{proof}

The following corollary provides a more general version of a result by Hempel (see Theorem 5.4 in \cite{He}) to construct examples of splittings with distance at least 3.  In particular, Hempel requires Dehn twisting about a full set of curves, which is not necessary for our result.

\begin{cor} \label{cor:dist3}
Let $\mcP$ be a pants decomposition of $\Sigma$.  Suppose $Y = \{y_1,y_2,...,y_t\}$ is a collection of curves in $\Sigma$ that nearly fills $V_X$ such that each component $y_i \in Y$ is 1-seamed with respect to $\mcP$ and $Y$ is collectively 2-seamed with respect to $\mcP$.  Then for $k \geq 2$, $d(K_{\mcP},K_{\tau^k_Y(\mcP)}) \geq 3$.
\end{cor}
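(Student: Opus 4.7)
The plan is to argue by contradiction. Suppose $d(K_{\mcP}, K_{\tau^k_Y(\mcP)}) \leq 2$, so there is a path $\alpha_0, \alpha_1, \alpha_2$ in $C(\Sigma)$ (padded with repeats if the distance is $0$ or $1$) with $\alpha_0 \in K_{\mcP}$, $\alpha_2 \in K_{\tau^k_Y(\mcP)}$, and $\alpha_1$ disjoint from both. The target contradiction is to show that $\alpha_1$ must be $1$-seamed with respect to $\mcP$; by Theorem~\ref{thm:Starr2} this would make $\alpha_1$ diskbusting on $V_{\mcP}$, forcing $d(K_{\mcP}, \alpha_1) \geq 2$ and contradicting the fact that $\alpha_0 \in K_{\mcP}$ is disjoint from $\alpha_1$.

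The first step is to show that $\alpha_1$ has large twisting number about some $y_j \in Y$, split into two subcases. If $\alpha_1 \notin K_{\tau^k_Y(\mcP)}$, then since $\alpha_1$ is disjoint from $\alpha_2 \in K_{\tau^k_Y(\mcP)}$ we have $d(\alpha_1, K_{\tau^k_Y(\mcP)}) = 1$, and Lemma~\ref{lem:dist3} applies directly to give $tw(\alpha_1, y_j) > k-1 \geq 1$ for some $y_j \in Y$. If instead $\alpha_1 \in K_{\tau^k_Y(\mcP)}$, write $\alpha_1 = \tau^k_Y(\beta)$ for some $\beta \in K_{\mcP}$; then the strict inequality in Lemma~\ref{lem:pantswinding}(c), applicable because $k \geq 2$ and $Y$ is collectively $2$-seamed, upgrades Lemma~\ref{lem:pantswinding}(b) to $tw(\alpha_1, y_i) > k \geq 2$ for every $y_i \in Y$.

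The second step is the translation from twisting number to seamedness: since each $y_j \in Y$ is $1$-seamed with respect to $\mcP$ and $|tw(\alpha_1, y_j)| > 1$, the remark preceding Lemma~\ref{lem:pantswinding} forces $\alpha_1$ itself to be $1$-seamed with respect to $\mcP$. Theorem~\ref{thm:Starr2} then yields that $\alpha_1$ is diskbusting on $V_{\mcP}$, delivering the contradiction $d(K_{\mcP}, \alpha_1) \geq 2$ outlined above.

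The only real subtlety is the dichotomy in the first step: Lemma~\ref{lem:dist3} as stated covers only $d(\alpha_1, K_{\tau^k_Y(\mcP)}) = 1$, so the degenerate case $\alpha_1 \in K_{\tau^k_Y(\mcP)}$ (which is forced when $d(K_{\mcP}, K_{\tau^k_Y(\mcP)}) \leq 1$) has to be treated separately. This is precisely where the collective $2$-seamedness hypothesis on $Y$ pays off: it activates the strict inequality of Lemma~\ref{lem:pantswinding}(c), closing the gap so that no additional machinery beyond what is already developed in the paper is required.
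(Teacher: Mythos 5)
Your proof is correct and follows essentially the same route as the paper's: apply Lemma~\ref{lem:dist3} to a curve at distance $1$ from $K_{\tau^k_Y(\mcP)}$, use the $1$-seamedness of $y_j$ to conclude that curve is $1$-seamed, and invoke Theorem~\ref{thm:Starr2} to force it to be distance $\geq 2$ from $K_{\mcP}$. Your explicit treatment of the degenerate case $\alpha_1 \in K_{\tau^k_Y(\mcP)}$ is a careful addition the paper leaves implicit (and there Lemma~\ref{lem:pantswinding}(b) alone already gives $tw \geq k \geq 2 > 1$, so part (c) is not actually needed), but it does not change the substance of the argument.
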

\begin{proof}
Let $\beta'$ be a curve such that $d(K_{\tau^k_Y(\mcP)},\beta') = 1$.  By Lemma \ref{lem:dist3}, $tw(\beta',y_j) > 1$ for some $y_j \in Y$.  As we have assumed that $y_j$ is 1-seamed with respect to $\mcP$, $\beta'$ must be 1-seamed as well.  Therefore, by Theorem \ref{thm:Starr2}, $d(K_{\mcP},\beta') \geq 2$ and so $d(K_{\mcP},K_{\tau^k_Y(\mcP)}) \geq 3$.
\end{proof}

We can now prove the case when $n=2$ in Theorem \ref{thm:main} in the following way.  Lemma \ref{lem:seamed} proves that each component of $Y^1$ is 1-seamed, which also means $Y^1$ is collectively 2-seamed since it has at least two components.  Lemma \ref{lem:nearlyfills} proves that $Y^1$ nearly fills $V_X$.  So we can apply Corollary \ref{cor:dist3} to $Y^1$ and conclude that $d(K_X,K_{Y^2}) \geq 3$.

Note that $Y^2$ is obtained by Dehn twisting about each component of $Y^1$, which has $g$ components.  If we wish to construct an example of a distance $\geq 3$ splitting by Dehn twisting about just a single curve, Corollary \ref{cor:dist3} implies that it is sufficient to Dehn twist twice about a curve that is distance 3 from $K_{\mcP}$ and 2-seamed with respect to $\mcP$.  Hempel \cite{He} and Berge \cite{Be} provide examples of distance 3 curves in a genus 2 surface that are 2-seamed with respect to an appropriate pants decomposition. 

We now provide the groundwork for an inductive argument to prove Theorem \ref{thm:main} for the case when $n > 2$.  Let $Y^i$ and $Y^{i-1}$ be defined as in Theorem \ref{thm:main} for $i \geq 2$. The next two results combine to show that if a curve $\beta'$ satisfies $tw(\beta',y^i_*) > 1$ for some element $y^i_*$ of $Y^i$, then any curve $\alpha'$ that is disjoint from $\beta'$ must satisfy $tw(\alpha',y^{i-1}_*) > 1$ for some $y^{i-1}_*$ of $Y^{i-1}$.

\begin{lem} \label{lem:doublestar}
Let $\mcP$ be a pants decomposition of $\Sigma$ .  Assume $Y = \{y_1,y_2,...,y_t\}$ is a collection of pairwise disjoint simple closed curves such that $Y$ nearly fills $V_{\mcP}$.  Moreover, let $y'_m = \tau^2_Y(x_m)$ for some $x_m \in \mcP$.  Suppose $\beta'$ is a simple closed curve such that $tw(\beta',y'_m) > 1$ with respect to $\mcP$.  If $\alpha'$ is a simple closed curve disjoint from $\beta'$, then $\alpha'$ must intersect some component $y_j \in Y$.
\end{lem}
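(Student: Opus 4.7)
The plan is to argue by contradiction: assume $\alpha'$ is disjoint from every component of $Y$. Invoking Proposition \ref{prop:nearlyfills}(1), the nearly-filling hypothesis immediately gives $d(D(V_{\mcP}),\alpha') \geq 2$, so $\alpha'$ is diskbusting on $V_{\mcP}$ and in particular meets the meridian $x_m$.

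Next, I would choose a concrete representative of $y'_m = \tau^2_Y(x_m)$ that agrees with $x_m$ outside an annular neighborhood $N_Y = \bigcup N_i$ of $Y$ and wraps twice around each $y_i$ inside $N_i$; a small isotopy puts $y'_m$ in efficient position with $\mcP$ while keeping $y'_m \cap \mcP$ confined to $N_Y$, since outside $N_Y$ the curve $y'_m$ runs parallel to $x_m \in \mcP$ and is disjoint from the remaining components of $\mcP$. Because $\alpha' \cap Y = \emptyset$, I can isotope $\alpha'$ to lie in $\Sigma - N_Y$; then $\alpha' \cap y'_m$ coincides with $\alpha' \cap x_m$ and is nonempty.

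Finally, let $N$ be a thin annular neighborhood of $y'_m$ fibered with respect to $\mcP$. All $I$-fibers of $N \cap \mcP$ lie in $N \cap N_Y$, so any component $c$ of $\alpha' \cap N$---being contained in $N - N_Y$---satisfies $c \cap \mcP = \emptyset$ and hence has twisting number $|c \cap \mcP|/|N \cap \mcP| = 0$. By hypothesis some component $b$ of $\beta' \cap N$ has twisting number $m$ with $|m| > 1$; since $\alpha' \cap \beta' = \emptyset$, the arcs $c$ and $b$ are disjoint in $N$, so Lemma \ref{lem:disjointcircling} forces $|m - 0| \leq 1$, contradicting $|m| > 1$. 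The main subtlety is arranging the representative of $y'_m$ and the neighborhood $N$ so that $y'_m \cap \mcP \subset N_Y$; once this is secured, the twisting numbers of the arcs of $\alpha' \cap N$ are forced to vanish and Lemma \ref{lem:disjointcircling} delivers the contradiction immediately.
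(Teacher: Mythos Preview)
Your argument is correct and follows essentially the same route as the paper: both proofs assume $\alpha'\cap Y=\emptyset$, use the nearly-fills hypothesis to force $\alpha'$ to meet $x_m$ (hence $y'_m$), and arrange an annular neighborhood $N$ of $y'_m$ so that $N\cap\mcP$ lies inside the annular neighborhoods $N_Y$ of $Y$. The only cosmetic difference is the final contradiction: the paper observes that the high-twisting arc $b\subset\beta'$ together with an $I$-fiber $\rho\subset\mcP$ forms a barrier in $N$ that $\alpha'$ must cross, whereas you phrase the same obstruction via Lemma~\ref{lem:disjointcircling} (an arc of $\alpha'\cap N$ has twisting $0$, so it cannot be disjoint from $b$).
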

\begin{proof}
Let $C = \{C_1,C_2,...,C_t\}$ be a collection of disjoint annular neighborhoods of the components of $Y$ that are each fibered with respect to $\mcP$.  By Lemma \ref{lem:winding}, we can obtain an embedding of $y'_m = \tau^2_Y(x_m)$ that is in efficient position with respect to $(\mcP,Y,C)$.  Note that each point of $y'_m \cap \mcP$ lies in $C$.  

\begin{figure}[ht]
\labellist
\footnotesize \hair 2pt
\pinlabel \textcolor{red}{$x_m$} at 11 0
\pinlabel \textcolor{red}{$x_m$} at 59 0
\pinlabel \textcolor{red}{$x_m$} at 167 0
\pinlabel \textcolor{blue}{$Y$} at 24 20
\pinlabel \textcolor{blue}{$Y$} at 24 61
\pinlabel \textcolor{blue}{$Y$} at 100 20
\pinlabel \textcolor{blue}{$Y$} at 100 61
\pinlabel \textcolor{blue}{$Y$} at 208 20
\pinlabel \textcolor{blue}{$Y$} at 208 61
\pinlabel \textcolor{green}{$y'_m$} at 79 1
\pinlabel \textcolor{green}{$y'_m$} at 191 1
\pinlabel $C$ at 16 12
\pinlabel $C$ at 16 53
\pinlabel $C$ at 92 12
\pinlabel $C$ at 92 53
\pinlabel $C$ at 200 12
\pinlabel $C$ at 200 53
\pinlabel $N$ at 143 15
\pinlabel $N$ at 143 56
\endlabellist
\centering \scalebox{1.5}{\includegraphics{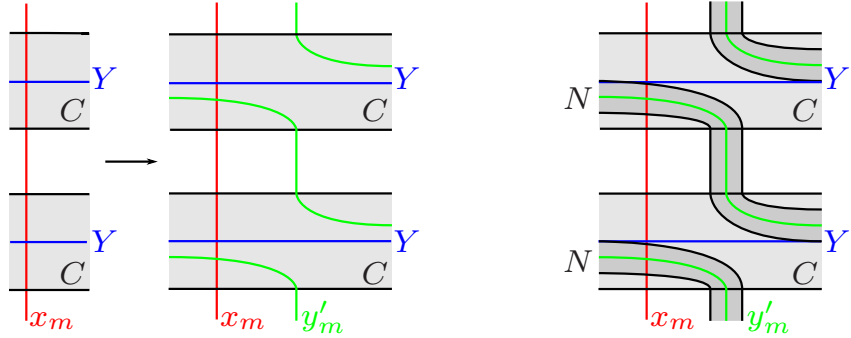}}
\caption{$N$ is chosen so that all components of $N \cap \mcP$ lie in $C$.}\label{fig:doublestar1}
\end{figure}

Now choose an annular neighborhood $N$ of $y'_m$ that is fibered with respect to $\mcP$ such that each component of $N \cap \mcP$ lies entirely in $C$ (see Figure \ref{fig:doublestar1}).  By hypothesis, there exists an embedding of $\beta'$ that intersects $(\mcP,y'_m,N)$ efficiently and a subarc $b$ of $\beta' \cap N$ has twisting number strictly greater than 1 with respect to $\mcP$.  So $b$ intersects some component $\rho$ of $\mcP \cap C_j$ at least twice for some $C_j \in C$.  Observe that any curve that intersects $y'_m$ nontrivally must also intersect $b$ or $\rho$ nontrivially (see Figure \ref{fig:doublestar2}).

\begin{figure}[ht]
\labellist
\footnotesize \hair 2pt
\pinlabel \textcolor{red}{$\rho$} at 43 100
\pinlabel \textcolor{green}{$y'_m$} at 8 40
\pinlabel $N$ at 90 15
\pinlabel \textcolor{magenta}{$b$} at 62 75
\endlabellist
\centering \scalebox{1.5}{\includegraphics{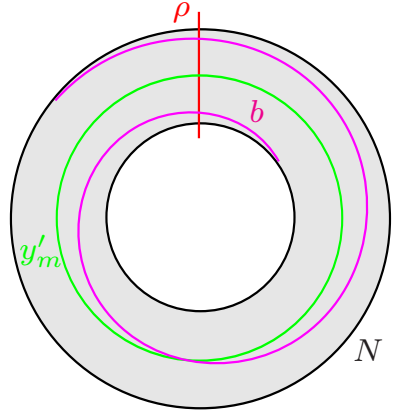}}
\caption{A curve that intersects $y'_m$ nontrivially must also intersect either $\rho$ or $b$.}\label{fig:doublestar2}
\end{figure}

Suppose, for the sake of contradiction, that $\alpha'$ is disjoint from $Y$.  Then as $Y$ nearly fills $V_X$, by definition $\alpha'$ must be at least distance 2 from $K_X$.  So $\alpha'$ must intersect the meridian $x_m$ and, as $\alpha'$ is fixed by $\tau_Y$, it also intersects $y'_m$.  Thus $\alpha'$ intersects either $b$ or $\rho$ nontrivially.  In the former case, $\alpha'$ then intersects $\beta'$, a contradiction of our hypothesis.  In the latter case, $\alpha'$ then has nontrivial intersection with $C_j$, as otherwise we could have isotopied $\alpha'$ to be disjoint from $\rho$.  This implies $\alpha'$ intersects $y_j \in Y$, a contradiction.
\end{proof}

The following lemma is based on a result in \cite{Ev} (see Lemma 4.3), which was later restated with a new proof in \cite{BTY} (see Lemma 3.4).  We provide a modified statement and proof since we desire twisting numbers that are strictly greater than 1.

\begin{lem} \label{lem:disjointwinding}
Let $\mcP$ be a pants decomposition of $\Sigma$.  Assume $Y = \{y_1,y_2,...,y_t\}$ is a collection of pairwise disjoint simple closed curves that intersect $\mcP$ efficiently, each curve is 1-seamed with respect to $\mcP$, and $Y$ is collectively 2-seamed.  Moreover, let $y'_m = \tau^2_{Y}(x_m)$ for some $x_m \in \mcP$.  Suppose $\beta'$ is a simple closed curve such that $tw(\beta',y'_m) > 1$ with respect to $\mcP$.  If $\alpha'$ is disjoint from $\beta'$ and $\alpha'$ intersects some $y_j \in Y$, then $tw(\alpha',y_j) > 1$.
\end{lem}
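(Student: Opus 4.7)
My plan is to show that, inside an annular neighborhood $C_j$ of $y_j$ fibered with respect to $\mcP$, some arc of $\beta' \cap C_j$ is a cross-arc of $C_j$ with twisting about $y_j$ strictly greater than $2$. The conclusion $tw(\alpha', y_j) > 1$ will then follow immediately from Lemma \ref{lem:disjointcircling}: since $\alpha' \cap y_j \neq \emptyset$ provides a cross-arc of $\alpha' \cap C_j$ that is disjoint from this distinguished $\beta'$-arc (because $\alpha'$ is disjoint from $\beta'$), their twistings differ by at most $1$ and the $\alpha'$-twisting exceeds $1$.

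The key input is the inequality $tw(y'_m, y_j) > 2$, which I obtain from Lemma \ref{lem:pantswinding}(c) applied with $k = 2$ and $\gamma = x_m$: the curve $x_m \in \mcP$ bounds a compressing disk in $V_{\mcP}$, each $y_i \in Y$ is 1-seamed, and $Y$ is collectively 2-seamed, so parts (b) and (c) together yield $tw(y'_m, y_j) > 2$. Writing $t_j := tw(y'_m, y_j)$ and $s_j := |C_j \cap \mcP|$, discreteness of twisting values in $\tfrac{1}{s_j}\mathbb{Z}$ forces $t_j \geq 2 + \tfrac{1}{s_j}$. I then choose the annular neighborhood $N$ of $y'_m$, fibered with respect to $\mcP$, thin enough that the component of $N \cap C_j$ around the arc of $y'_m \cap C_j$ realizing twisting $t_j$ lifts, in the universal cover $B_j = [0,1] \times \mathbb{R}$ of $C_j$, to a strip of slope $t_j$ and width $w < \tfrac{1}{s_j}$.

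Let $b$ be an arc of $\beta' \cap N$ with twisting $> 1$ about $y'_m$. In the universal cover $\tilde N$, $b$ lifts to a line of slope $m > 1$, while the components of $N \cap C_j$ lift to horizontal bands repeating with period $1$ and occupying total vertical fraction $f_j < 1$. Taking a lift of $b$ whose starting height $y_0$ lies in the complement of the bands within one period, the next band above $y_0$ has gap $d$ and height $h$ satisfying $d + h \leq 1 < m$ (since a single period of length $1$ contains all gaps and bands), so this band is fully contained in $[y_0, y_0 + m]$ and fully traversed by $b$. Projecting back, the corresponding arc of $\beta' \cap C_j$ is a cross-arc of $C_j$ confined to the strip of slope $t_j$ and width $w$ in $B_j$, so its slope---and hence its twisting---is trapped in $(t_j - w, t_j + w) \cap \tfrac{1}{s_j}\mathbb{Z} = \{t_j\}$, yielding the desired twisting $t_j > 2$.

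The main obstacle I anticipate is justifying the assumption that ``$y_0$ lies in the complement of the bands,'' which geometrically amounts to arranging $b$'s endpoints on $\partial N$ to sit outside $C_j$: if $b$'s twisting $m$ barely exceeds $1$, it is a priori possible that both endpoints land on $\partial N \cap C_j$, degrading every arc of $b \cap C_j$ into a partial crossing. I plan to handle this by a slight preliminary isotopy of $\beta'$---justified by Lemma \ref{lem:efficientisotopy} for its compatibility with the efficient-position constraint---that pushes the points of $\beta' \cap \partial N$ along $\partial N$ into the complement of $C_j$, while preserving efficient position with respect to $(\mcP, y'_m, N)$ and disjointness from $\alpha'$. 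Alternatively, since every arc of $\beta' \cap N$ has twisting within $1$ of the maximum by Lemma \ref{lem:disjointcircling}, one may pass to a different arc of $\beta' \cap N$ whose endpoints already lie outside $C_j$ and still carries twisting strictly greater than $1$.
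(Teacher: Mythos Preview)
Your overall strategy matches the paper's: show that some arc of $\beta' \cap C_j$ has twisting strictly greater than $2$ about $y_j$, then invoke Lemma~\ref{lem:disjointcircling}. The appeal to Lemma~\ref{lem:pantswinding}(b),(c) to get $tw(y'_m, y_j) > 2$ is also the same.

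The gap is in your handling of the obstacle you flagged. Neither proposed fix is sound. Pushing an endpoint of $b$ along $\partial N$ out of $C_j$ will in general cross components of $N \cap \mcP$ (these $I$-fibers of $N$ lie precisely inside the $C_i$'s, and at least two of them lie in $C_j$); this changes $|b \cap \mcP|$ and hence the twisting number of $b$ in $N$, with no guarantee it remains above $1$. The alternative --- passing to another arc of $\beta' \cap N$ whose endpoints already lie outside $C_j$ --- need not exist: nothing prevents every arc of $\beta' \cap N$ from having an endpoint inside $C_j$.

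The paper resolves this with an observation you did not exploit: since $y_j$ is $1$-seamed, the curve $x_m \in \mcP$ crosses $y_j$ at least twice, so $y'_m \cap C_j$ contains at least \emph{two} arcs $c_1, c_2$, each with twisting $> 2$. Because $b$ has twisting $> 1$ in $N$, one can isotope $b$ within $N$ so that it coincides with $y'_m$ everywhere except in a small neighborhood of a single point $r \in y'_m \cap \mcP$. With two arcs $c_1, c_2$ available and only one exceptional point, $b$ must contain at least one of them entirely, giving $tw(\beta', y_j) > 2$ directly. Translated into your universal-cover picture: even when the endpoint $y_0$ of the lift of $b$ lands inside a band, the presence of a \emph{second} band within the same period forces the lift (of vertical extent $> 1$) to fully traverse that other band --- this is the missing ingredient that makes your band-crossing argument go through without any preliminary isotopy.
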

\begin{proof}
Let $C$ be an annular neighborhood of $y_j$ that is fibered with respect to $\mcP$. Assume we take a copy of $x_m$ that has been pushed off of $\mcP$.  Then the arcs of $x_m \cap C$ all have twisting number 0.  Moreover, there are at least two components of $x_m \cap C$ since $y_j$ is 1-seamed with respect to $\mcP$.  Lemma \ref{lem:winding} implies that we can obtain an embedding of $y'_m$ that is in efficient position with respect to $(\mcP,y_j,C)$ such that $y'_m \cap C$ contains at least two arcs with twisting number 2 that we will denote as $c_1$ and $c_2$.  Since $Y$ is collectively 2-seamed, we can then use Lemma \ref{lem:pantswinding} part (c) to show that the twisting numbers of $c_1$ and $c_2$ are strictly greater than 2. Note that we can assume all intersections between $y'_m$ and $\mcP$ lie in $C$.

Now let $N$ be an annular neighborhood of $y'_m$ that is fibered with respect to $\mcP$ and each component of $N \cap \mcP$ lies entirely in $C$.  As $tw(\beta',y'_m) > 1$, there exists an embedding of $\beta'$ that is in efficient position with respect to $(\mcP,y'_m,N)$ and a component $b$ of $\beta' \cap N$ has twisting number greater than 1.  We can therefore isotope $\beta'$ within $N$ to agree with $y'_m$ except for a small neighborhood around a point of intersection $r$ between $y'_m$ and $\mcP$ (see Figure \ref{fig:isotopyoverlap}).  As $r$ lies in $C$ we can assume this small neighborhood is contained in $C$.  Since $c_1$ and $c_2$ are distinct components of $y'_m \cap C$, $b$ coincides with at least one of them.  So $tw(\beta',y_j) > 2$.  As $\alpha'$ is assumed to be a curve that intersects $y_j$ and is disjoint from $\beta'$, by Lemma \ref{lem:disjointcircling} we have $tw(\alpha',y_j) > 1$.
\end{proof}

\begin{figure}[ht]
\labellist
\footnotesize \hair 2pt
\pinlabel \textcolor{red}{$\mcP$} at 49 103
\pinlabel \textcolor{red}{$\mcP$} at 175 103
\pinlabel \textcolor{magenta}{$b$} at 67 9
\pinlabel \textcolor{magenta}{$b$} at 193 12
\pinlabel \textcolor{blue}{$y'_m$} at 23 20
\pinlabel $N$ at 85 7
\pinlabel $N$ at 211 7
\pinlabel $r$ at 75 105
\pinlabel $r$ at 199 105
\endlabellist
\centering \scalebox{1.5}{\includegraphics{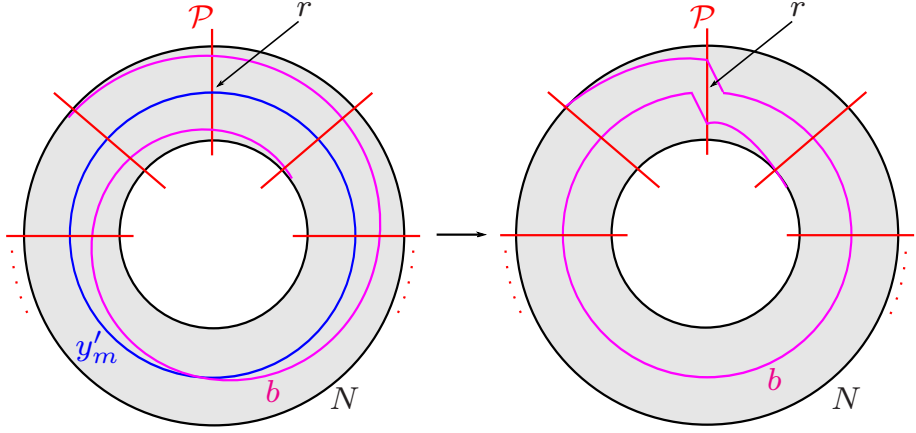}}
\caption{$b$ is isotoped to coincide with $y'_m$ everywhere except a small neighborhood of $r$.}\label{fig:isotopyoverlap}
\end{figure}

We can now prove Theorem \ref{thm:main}.

\vspace{10pt}

\noindent \textit{Proof of Theorem \ref{thm:main}.} 
Extend $X$ to a pants decomposition $\mcP$ of $\Sigma$ such that $X \subset \mcP$ and $\gamma_s$ is 1-seamed with respect to $\mcP$.

As mentioned earlier, the case when $n=1$ is a special case of Theorem \ref{thm:exact} (when $d = 2$ and $k = 2$).  For the case when $n=2$, we use Lemma \ref{lem:seamed} to show that each component of $Y^i$ for $i \geq 1$ is 2-seamed with respect to $\mcP$ (this fact is also used for the case when $n > 2$).  Moreover, this implies each $Y^i$ is collectively 2-seamed since $Y^i$ contains at least $g$ components where $g \geq 2$.  As Lemma \ref{lem:nearlyfills} shows that $Y^1$ nearly fills $V_X$, $Y^1$ satisfies the hypotheses of Corollary \ref{cor:dist3}, which shows that $d(K_X,K_{Y^2}) \geq 3$.

So assume inductively that $d(K_X,K_{Y^i}) \geq i+1$ for $i < n$ and $n \geq 3$ and it remains to prove that $d(K_X,K_{Y^n}) \geq n+1$.  In particular, this means that $Y^n$ is the image of $X$ after two Dehn twists around a single curve $y^{n-1}_*$ that is distance at least 3 from $K_X$ and is 2-seamed with respect to $\mcP$ by Lemma \ref{lem:seamed}.

In search of a contradiction, suppose that $d(K_X,K_{Y^n}) = m$ for some $2 \leq m \leq n$.  Then there exists a sequence of curves $\alpha_0, \alpha_1, ..., \alpha_m$ such that $\alpha_j \cap \alpha_{j+1} = \emptyset$ for $0 \leq j < m$, $\alpha_0 \in K_X$, and $\alpha_m \in K_{Y^n}$.  

As $n \geq 3$, $d(K_X,y^{n-1}) \geq 3$ by our inductive hypothesis and therefore $y^{n-1}$ nearly fills $V_X$.  Then since $\alpha_{m-1}$ is distance 1 from $K_{Y^n}$, we can apply Lemma \ref{lem:dist3} to get $tw(\alpha_{m-1},y^{n-1}_*) > 1$.  For $i \geq 2$, each component of $Y_i$ nearly fills $V_X$ and is 2-seamed with respect to $\mcP$ by Lemma \ref{lem:seamed}.  If $m > 2$, we can then apply Lemma \ref{lem:doublestar} to show that a curve disjoint from $\alpha_{m-1}$ (in particular, $\alpha_{m-2}$), must intersect $y^{n-2}_*$ and then Lemma \ref{lem:disjointwinding} implies that $tw(\alpha_{m-2},y^{n-2}_*) > 1$.  We can then repeat the application of Lemma \ref{lem:doublestar} and Lemma \ref{lem:disjointwinding} and conclude that $tw(\alpha_1,y^{n-m+1}_*) > 1$ where $y^{n-m+1}_*$ is an element of $Y^{n-m+1}$.  Observe that in the case when $(n-m+1) > 1$, $y^{n-m+1}_*$ is precisely the element of $Y^{n-m+1}$ used to generate $Y^{n-m+2}$ as specified in the statement of the theorem.  If $(n-m+1) = 1$, then since $Y^2$ was obtained by Dehn twisting about all $g$ curves of $Y^1$, $y^1_*$ represents some component of $Y^1$ that we know had to intersect $\alpha_1$ by Lemma \ref{lem:doublestar}.

In either case, $\alpha_1$ has a subarc with twisting number strictly greater than 1 around a 1-seamed curve.  So $\alpha_1$ is also 1-seamed with respect to $\mcP$ and $d(K_X,\alpha_1) \geq 2$ by Theorem \ref{thm:Starr2}.  But we had assumed that $\alpha_1$ was distance at most 1 from $K_X$ and therefore we have a contradiction.

Hence $d(K_X,K_{Y^n}) \geq (n+1)$. \hfill $\square$

\vspace{10pt}

The following are some observations about the examples generated in Theorem \ref{thm:main}.

\vspace{5pt}

\textit{1. The manifolds $V_X \cup_{\Sigma} V_{Y^n}$ are Haken.} 

\vspace{5pt}

Evans (see Theorem 3.4 and Corollary 3.5 in \cite{Ev}) showed that given a collection of pairwise disjoint simple closed curves $Z$ and $Z' = \tau^k_Z(X)$ where $k > 0$, then the resulting manifold determined by $V_X \cup_{\Sigma} V_{Z'}$ is Haken if $Z$ is not full (for example, if $Z$ contained less than $g$ components).  Moreover, she showed that $\pi(V_X \cup_{\Sigma} V_{Z'}) \rightarrow \pi(V_X \cup_{\Sigma} V_{Z})$ is a surjection.  When applied to Theorem \ref{thm:main} (for $n \neq 2$), the manifold determined by $V_X \cup_{\Sigma} V_{Y^n}$ is Haken since it was obtained by Dehn twisting about a single curve.  $V_X \cup_{\Sigma} V_{Y^2}$ is also Haken since its fundamental group surjects onto $V_X \cup_{\Sigma} V_{Y^1}$, which was Haken.  

The same argument can be used to show that the manifolds resulting from Theorem \ref{thm:exact} are also Haken.  Recently, Li \cite{Li} has produced examples of high distance non-Haken Heegaard splittings.


\vspace{5pt}

\textit{2. For $n \geq 2g$, the manifold determined by $V_X \cup_{\Sigma} V_{Y^n}$ has minimal genus $g$.}

\vspace{5pt}

Scharlemann and Tomova (see Corollary 4.5 in \cite{ST}) showed that if the distance $d$ of a genus $g$ Heegaard splitting with splitting surface $\Sigma$ is strictly greater than $2g$, then the splitting is minimal genus.  Since Theorem \ref{thm:main} shows that $V_X \cup_{\Sigma} V_{Y^n}$ has distance at least $n+1$, if $n \geq 2g$, then the splitting $V_X \cup_{\Sigma} V_{Y^n}$ is a minimal genus splitting.

\newpage
\bibliographystyle{amsplain}
\bibliography{high-distance-11-30}

\end{document}